\documentclass{article}

\usepackage[english]{babel}

\usepackage[utf8]{inputenc}
\setlength{\parindent}{2em}
\setlength{\parskip}{0.5em}

\usepackage[utf8]{inputenc}
\usepackage{amsmath}
\usepackage{graphicx}
\usepackage{amssymb}
\usepackage{amsthm}
\usepackage{tikz-cd}
\usepackage{mathrsfs}
\usepackage[colorinlistoftodos]{todonotes}
\usepackage{enumitem}
\usepackage{yfonts}
\usepackage{ dsfont }
\usepackage{MnSymbol}
\usepackage{slashed}

\title{Survey on the metric SYZ conjecture and non-archimedean geometry}

\author{Yang Li}

\date{\today}
\newtheorem{thm}{Theorem}[section]
\newtheorem{lem}[thm]{Lemma}

\theoremstyle{definition}
\newtheorem{eg}[thm]{Example}

\newtheorem{conj}[thm]{Conjecture}

\newtheorem{rmk}{Remark}
\newtheorem{prop}[thm]{Proposition}
\newtheorem{Def}[thm]{Definition}
\newtheorem{Question}{Question}
\newtheorem*{Notation}{Notation}

\newtheorem*{Acknowledgement}{Acknowledgement}

\newcommand{\ie}{\emph{i.e.} }
\newcommand{\cf}{\emph{cf.} }

\newcommand{\R}{\mathbb{R}}
\newcommand{\C}{\mathbb{C}}

\newcommand{\N}{\mathbb{N}}
\newcommand{\Q}{\mathbb{Q}}

\newcommand{\norm}[1]{\left\lVert#1\right\rVert}
\newcommand{\Lap}{\Delta}

\DeclareMathOperator{\Tr}{Tr}

\begin{document}
	\maketitle

\begin{abstract}
We survey the metric aspects of the Strominger-Yau-Zaslow conjecture on the existence of special Lagrangian fibrations on Calabi-Yau manifolds near the large complex structure limit. We will discuss the diverse motivations for the conjectural picture, what the best hopes are, and a number of subtleties. The bulk of the survey highlights the role of pluripotential theory, and non-archimedean geometry in particular, with a list of open questions.
\end{abstract}

\section{Overview}

We survey the recent progress on the metric aspect of the Stominger-Yau-Zaslow conjecture, which concerns the existence of special Lagrangian fibrations on Calabi-Yau manifolds near the large complex structure limit. The paper is based substantially on \cite{LiFermat}\cite{LiNA}\cite{LiSkoda} and various subsequent talks.  A rough outline of the contents of chapter 2,3,4,5,6 is as follows:

\begin{itemize}
\item   We trace the diverse motivations of the SYZ conjecture from mirror symmetry, minimal surfaces, Riemannian geometry, complex and non-archimedean geometry. We discuss the most optimistic interpretation and its difficulties, before moving on to a more cautious weak version.

\item    We review the meaning of the large complex structure and essential skeleton from a complex geometric perspective, before a brief survey on the Kontsevich-Soibelman conjecture.

\item    We explain the various analytic ingredients. A brief overview of Yau's solution of the Calabi conjecture is given, to explain why new ideas are needed to understand the large complex structure limit.
Our primary focus is on complex pluripotential theory, which is the analytic core of \cite{LiFermat}\cite{LiNA}, alongside Savin's small perturbation theorem in elliptic PDE, and the regularity theory of real Monge-Amp\`ere equation.

\item  We include a minimalistic overview of non-archimedean pluripotential theory:  the notion of Berkovich spaces, semipositive metrices, and the non-archimedean Calabi metric. We also explain the intuition of the conjectural `comparison property', which is needed to give a differential geometric interpretation of the non-archimedean Calabi metric.

\item  We outline the proof of the recent progress \cite{LiFermat}\cite{LiNA}, emphasizing on the intuition, the subtleties, and the open problems. 

\end{itemize}

\begin{Acknowledgement}
The author is a current Clay Research Fellow and a MIT CLE Moore Instructor. He thanks L\'eonard Pille-Schneider for comments.
\end{Acknowledgement}

\section{Metric SYZ conjecture}

The Strominger-Yau-Zaslow conjecture is originally motivated by a combination of physical and differential geometric considerations, and stands at the crossroad of mirror symmetry, minimal surface theory, Riemannian geometry, complex K\"ahler geometry, and non-archimedean geometry. We will trace some historically significant developments, to see how 
the gradual realization of the analytic difficulties, has led to eclectic interpretations of the conjecture.% We shall explain some of the difficulties for the strong version, before formulating the weak version of the SYZ conjecture.

\subsection{The genesis of the SYZ conjecture}

An $n$-dimensional Calabi-Yau (CY) manifold $(X,\omega, \Omega)$ is a K\"ahler manifold with a nowhere vanishing holomorphic volume form $\Omega$, satisfying the complex Monge-Amp\`ere (MA) equation
\begin{equation}
\omega^n= \text{const} \Omega\wedge \overline{\Omega}, 
\end{equation}
which implies the Ricci flatness of the metric. Furthermore, such manifolds admit parallel spinors, hence are candidates for the target space metrics of supersymmetric type II string theories. Special Lagrangians of phase $\theta$ are $n$-dimensional submanifolds $L$ satisfying
\begin{equation}
\omega|_L=0,\quad \text{Im}(e^{-i\theta}\Omega)|_L=0.
\end{equation}
These are absolute minimizers within their homology classes, due to the calibration inequality
\begin{equation}
\int_L \text{Re}(e^{-i\theta}\Omega) \leq \int_L dvol_L =\text{Vol}(L)
\end{equation}
saturated precisely by the special Lagrangians. Physically, these correspond to the support of BPS D-branes.

The \textbf{Strominger-Yau-Zaslow conjecture} \cite{SYZ} in its primitive form asks:

\begin{conj}
Given a compact Calabi-Yau manifold $X$ near the large complex structure limit, can we find a \emph{special Lagrangian torus fibration} on $X$?
\end{conj}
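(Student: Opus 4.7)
The plan is to combine the complex-analytic theory of the Ricci-flat metric with Lagrangian mean curvature flow and the tropical structure on the essential skeleton, then to promote the resulting topological torus fibration to an actual special Lagrangian one by a global implicit function argument on the full manifold $X$. Specifically, first apply Yau's theorem to produce the unique Ricci-flat K\"ahler metric $\omega$ in a fixed cohomology class, normalized so that the diameter stays bounded as we approach the large complex structure limit (LCSL). Collapsing theory predicts that $(X,\omega)$ Gromov--Hausdorff converges to the essential skeleton $B$, which is half-dimensional and carries an integral affine structure with singularities on a real codimension two locus $\Delta \subset B$.

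Second, on the smooth locus $B_0 = B \setminus \Delta$ use the affine structure to define candidate Lagrangian torus fibers: over each $b \in B_0$ take the torus $T_b$ coming from the Gross--Wilson / semi-flat ansatz compatible with the local affine coordinates. The semi-flat approximation provides an almost special Lagrangian fibration there, and I would run Lagrangian mean curvature flow starting from these tori. Since the Maslov form controls the deviation from special Lagrangian and the tori are almost calibrated, Thomas--Yau--type estimates should yield convergence to genuine special Lagrangian tori, provided the initial error is small in a weighted norm adapted to the collapsing geometry; nearness to the LCSL supplies this smallness via the exponential convergence of $\omega$ to the semi-flat model away from $\Delta$.

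The hard part is extending the fibration over neighborhoods of the singular locus $\Delta$, where affine monodromy forces the fibers to degenerate. The strategy is twofold: (i) construct explicit local models of singular special Lagrangian fibrations stratum by stratum, starting from Harvey--Lawson cones and Joyce's $U(1)$-invariant examples, together with their higher-dimensional analogues; and (ii) glue these local models to the smooth fibration on $B_0$ via a weighted implicit function theorem in the moduli space of possibly singular special Lagrangians, with weights encoding the distance to $\Delta$ and the local rate of collapse. Global existence of the fibration on all of $X$ then reduces to uniform invertibility of a first-order elliptic system on the singular torus fibration, with constants independent of the approach to the LCSL.

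I expect step (ii) to be the principal obstacle. The deformation theory of singular special Lagrangians is genuinely obstructed in Joyce's sense, so producing a singular fiber with the precise topology dictated by the monodromy requires balancing obstruction sections that are not transverse a priori. Furthermore, the required local models near generic codimension two strata of $\Delta$ are not known in dimensions higher than three, and even in dimension three the gluing to the semi-flat region has never been carried out. It is precisely this difficulty that has motivated the shift, reviewed in the rest of the paper, toward weaker measure-theoretic or non-archimedean reformulations; a direct solution of the conjecture as worded would likely demand a new geometric measure theory for families of calibrated submanifolds with prescribed singular strata, well beyond present technology.
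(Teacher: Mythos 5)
The statement you were asked to prove is the SYZ conjecture itself, which is open; the paper does not prove it and does not claim to. What the paper actually establishes (Theorem~\ref{NAthm} and Theorem~\ref{Fermatthm}) is the \emph{weak metric version}: a special Lagrangian $T^n$-fibration on an open subset of $X_t$ capturing a fraction of the Calabi--Yau measure tending to $100\%$, and even that only for the Fermat family along a subsequence, or under the unproven non-archimedean comparison property. Your proposal is a strategy outline for the \emph{strong} metric version, and by your own admission every load-bearing step is conjectural. Concretely: (a) Gromov--Hausdorff convergence to the essential skeleton carrying an affine structure with singularities in real codimension two is the Kontsevich--Soibelman conjecture, itself open --- only the uniform diameter bound is known, and the paper notes that even locating the singular locus on $Sk(X)$ for a generic quartic K3 is unresolved; (b) the exponential closeness of $\omega_{CY,t}$ to a semi-flat model away from $\Delta$, which you need to seed the mean curvature flow, is not known outside the hyperk\"ahler setting --- the paper obtains only $C^\infty$-closeness on the generic region, via pluripotential estimates plus Savin's theorem, and only under the extra hypotheses above; (c) long-time existence and convergence of Lagrangian mean curvature flow from almost-calibrated tori is essentially the Thomas--Yau conjecture, also open, and finite-time singularities are not excluded; (d) following Joyce, the discriminant locus is expected to be a codimension-one ribbon rather than codimension two, the $I_1\times\R$ transverse model does not yield a Fredholm deformation theory, and the candidate local models near the vertices (the generalized Ooguri--Vafa metrics) are only known to exist as local models --- whether they arise as blow-up limits on compact manifolds is an explicitly listed open question in the paper.

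The honest conclusion of your last paragraph --- that a direct attack is beyond present technology --- is correct, but it means you have not produced a proof; no step of your outline is actually carried out. The paper's route to its partial results is different in kind: it abandons the singular fibers and the gluing problem entirely, reduces the existence of special Lagrangian tori in the generic region to a $C^0$-estimate on the K\"ahler potential (Prop.~\ref{C0toCinfty} combined with Zhang's perturbation result from section~\ref{SpecialLagrangiansurvey}), and obtains that estimate either from non-archimedean pluripotential theory (the NA Calabi--Yau metric together with the comparison property) or from an a priori convexity and compactness argument for fiberwise-averaged potentials plus the improved Skoda inequality. The one part of your proposal that survives is the semi-flat heuristic in your second step, but it is implemented via Savin's small perturbation theorem and McLean-type deformation theory, not mean curvature flow.
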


The physical origin of the SYZ conjecture \cite{SYZ} comes largely from \textbf{mirror symmetry}, and a very brief sketch is as follows. From homological mirror symmetry, one expects a  compact Calabi-Yau manifold $X$ admits a mirror $X^\vee$, such that the category of D-branes on both sides are identified. On the holomorphic side $X^\vee$ (`B-side'), the points $x\in X^\vee$ support skyscrapper sheaves $\mathcal{O}_x$, which should correspond to certain Lagrangian branes inside the symplectic side $X$ (`A-side'). The extension groups $\text{Ext}^*(\mathcal{O}_x,\mathcal{O}_x)\simeq H^*(T^n)$, which suggests the Lagrangian branes are torus objects. For $x\neq y$, the Ext groups between $\mathcal{O}_x, \mathcal{O}_y$ would vanish, which suggests (inconclusively\footnote{The vanishing of Floer cohomology does not imply the vanishing of Floer cochain spaces, and there seems to be no strong argument to rule out intersecting special Lagrangians.}) that the tori are disjoint, leading to the speculation of the Lagrangian fibration structure. The assertion about \emph{special} Lagrangians, is however beyond mere homological mirror symmetry, and comes from the BPS condition on the D-branes. The moduli space of all $\mathcal{O}_x$ is the mirror manifold $X^\vee$, which should then be identified with the moduli space of BPS branes supported on the special Lagrangian tori. This moduli interpretation gives rise to a zeroth order approximation of K\"ahler structure on the mirror manifold $X^\vee$, subject to the higher order corrections related to the holomorphic discs (`instanton corrections'), whose effect is supposedly exponentially suppressed except near the singular fibres.  Ignoring the subtleties of singular fibres, then the SYZ picture offers a program to reconstruct the mirror, and interpret homological mirror symmetry as a version of Fourier-Mukai transform (`Mirror symmetry is T-duality').

\begin{Notation}
	Our convention is $d=\partial+ \bar{\partial}$,  $d^c= \frac{ \sqrt{-1} }{2\pi} (-\partial+ \bar{\partial})$, so $dd^c= \frac{\sqrt{-1}}{\pi}\partial \bar{\partial}$. The relation between K\"ahler potentials and K\"ahler metrics is $\omega_\phi= \omega+dd^c\phi$. Alternatively, we think of a K\"ahler metric in terms of local absolute potentials, meaning $\omega=dd^c\varphi$ for locally defined psh functions $\varphi$. Given a Hermitian metric $h$ on a line bundle $L$, its curvature form is $-dd^c\log h^{1/2}$ in the class $c_1(L)$.
\end{Notation}

Differential geometrically, the main evidence presented in the SYZ paper is the \textbf{semiflat metrics}. Consider the logarithm map $\text{Log}_t: (\C^*)^n\to \R^n$ over some open convex subset  $U\subset\R^n$,
\[
\text{Log}_t (z_1,\ldots z_n)=\frac{1}{\log |t|} (\log |z_1|,\ldots \log |z_n|). 
\]
Imposing $T^n$ symmetry, then by an elementary Hessian computation, $\phi$ is a smooth strictly convex function downstairs on $U$ if and only if its pullback to $\text{Log}_t^{-1}(U)$ is a smooth K\"ahler potential, and the Calabi-Yau condition 
\[
(dd^c\phi\circ \text{Log}_t)^n= \frac{\text{const}}{|\log |t||^{2n}} \prod d\log z_i\wedge d\overline{\log z_i}
\]
is equivalent to the \textbf{real Monge-Amp\`ere equation}
\[
\det (D^2 \phi) =\text{const}. 
\]
In this setting, the metric $dd^c\phi\circ \text{Log}_t$ is called semiflat, because its restriction to the $T^n$ fibres are Euclidean, due to $T^n$-symmetry. With respect to the Calabi-Yau structure
\[
\omega=dd^c (\phi\circ \text{Log}_t),\quad \Omega= \sqrt{-1}^n\prod d\log z_i,
\]
the $T^n$ fibres are \emph{special Lagrangians} of phase zero. The purpose of introducing the normalization parameter $t$, is that as $t\to 0$, the $T^n$-fibres shrink down to zero size, and the Calabi-Yau metrics $dd^c(\phi\circ \text{Log}_t)$ converge to the \textbf{real Monge-Amp\`ere metric} on $U\subset \R^n$
\begin{equation}\label{realMAmetric}
g_0= \frac{1}{2\pi}\sum_{i,j} \frac{\partial^2 \phi}{\partial x_i\partial x_j} dx_i dx_j, \quad \det(D^2\phi) =\text{const}. 
\end{equation}

Now near the \textbf{large complex structure limit}, which is a certain limiting situation for a family of Calabi-Yau metrics,
it is expected that the semiflat metrics emerge as an asymptotic description of the degenerating Calabi-Yau metrics, in the \emph{generic region} of the Calabi-Yau manifolds. In the SYZ picture, the generic region heuristically means away from the singular special Lagrangian fibres. The main point is that in the generic region, the special Lagrangians are just small perturbations of the logarithm maps in local toric charts. As we approach the large complex structure limit, the percentage of the Calabi-Yau volume measure occupied by the generic region should tend to $100\%$.

This recount of this SYZ heuristic reasoning underlines a few precautions:

\begin{itemize}
\item The metric predictions are more compelling in the \emph{generic region}. The non-generic region is subject to instanton  correction effects, whose metric significance 
is much more difficult to analyze.

\item The SYZ fibration is likely an \emph{emergent} behaviour near the large complex structure limit. In particular, the conjecture is  concerned with a \emph{family} of Calabi-Yau manifolds, and features such as semiflat metrics would only appear sufficiently close to the limit.

\end{itemize}

\subsection{Further motivations}

Aside from its importance in \textbf{mirror symmetry}, the SYZ conjecture is interesting for other diverse fields such as minimal surface theory, Riemannian geometry, K\"ahler and algebraic geometry.

\begin{itemize}
    %\item In the original SYZ philosophy \cite{SYZ}\cite{Chan}, given a Lagrangian torus fibration on a compact Calabi-Yau manifold, the \textbf{mirror} manifold is to leading order approximation the moduli space of the torus fibres with $U(1)$-local systems, and has the structure of a dual torus fibration. The special Lagrangian condition is motivated by the calibration property, known in physics as the BPS bound. Ignoring subtleties with singular fibres, Fourier-Mukai transform gives a heuristic explanation of homological mirror symmetry.
    
\item Special Lagrangians are \textbf{minimal submanifolds}, and in fact calibrated submanifolds. Currently, there are few methods for producing special Lagrangians in sufficiently large supply of Calabi-Yau manifolds, although there is a series of conjectures initiated by Thomas-Yau \cite{ThomasYau}\cite{Thomas} and further developed in \cite{Joyceconj}\cite{LiThomasYau}.

\item  The behaviour of a family of \textbf{Einstein metrics} depends strongly on whether the volume of geodesic balls satisfies the noncollapsing condition for some uniform constant $\kappa>0$.
\[
\text{Vol}_g(B_g(r)) \geq \kappa r^{\dim_\R X}, \quad \forall 0< r\leq \text{diam}(X).
\]
A good convergence and regularity theory is available in the non-collapsing case \cite{CheegerNaber}. On the other hand, metric degeneration in the collapsing case is largely terra incognita in Riemannian geometry, and the semiflat metric asymptote is a highly nontrivial emergent feature for a \textbf{collapsing} family of Calabi-Yau metrics.

\item A recurring theme of K\"ahler geometry is the interplay between metric  and complex geometry. 
 K\"aher-Einstein metrics in the non-collapsing case is tied to projective geometry \cite{DonaldsonSun}. The large complex structure limit is a very severe kind of polarized degeneration, whose transcendental behaviour (related to exponential and logarithms) is not adequately captured by traditional projective geometry, and instead  \textbf{non-archimedean} geometry stands out as a natural framework. One can then ask about the relation between Calabi-Yau metrics and non-archimedean geometry, a problem that turns out to be related to the SYZ conjecture.

\end{itemize}

\subsection{Collapsing K3 surfaces with elliptic surfaces}\label{GrossWilson}

An influential early work related to the SYZ conjecture is the gluing description of Gross-Wilson \cite{GrossWilson} concerning the hyperk\"ahler metric on K3 surfaces $X$ with elliptic fibrations $\pi:X\to \mathbb{CP}^1$.

	 By hyperk\"ahler rotation, the special Lagrangian torus fibres can be viewed as the holomorphic elliptic curves in a different complex structure. In the generic situation, the elliptic fibration has 24 $I_1$-type singular fibres, namely the local singularity in the fibration is modelled on $(z_1,z_2)\mapsto z_1z_2$ complex geometrically. 
 They fix K\"ahler classes $[\omega_X]$ on the K3 surface, and $[\omega_{\mathbb{P}^1}]$ on $\mathbb{CP}^1$, and describe the Calabi-Yau metrics $\omega_\tau$ in the K\"ahler class $\tau[\omega_X]+\pi^*[\omega_{\mathbb{P}^1}]$ for $0<\tau\ll 1$. Some key conceptual features are:

\begin{itemize}
\item In the generic region, the metrics $\omega_\tau$ are up to exponentially small errors modelled on \emph{semiflat metrics}, which can be explicitly described via the periods integrals of the elliptic curves. The subset of the K3 surface on which the semiflat metric asymptote breaks down, has length scale $O(\tau^{1/2}|\log \tau|^{1/2})$.

\item As $\tau\to 0$, the metrics $\omega_\tau$ converge in the Gromov-Hausdorff sense to a singular metric on $\mathbb{CP}^1$. The diameter of the K3 surfaces is of constant order $O(1)$. The length scale of generic elliptic curve fibres is $O(\tau^{1/2})$, which shrinks to zero size as $\tau\to 0$.

\item
In the neighbourhood of the $I_1$ type singular fibres, the metrics are modelled on the \emph{Ooguri-Vafa metrics}, which are explicit $S^1$-invariant incomplete hyperk\"ahler metrics constructed by means of the \emph{Gibbons-Hawking ansatz}.

\item The asymptotic geometry of the Ooguri-Vafa metric matches with the semiflat metric. This is a basic requirement for the gluing construction.

\item Near the nodal singular point of the $I_1$-fibre, the Ooguri-Vafa metric contains a small region approximated by the \emph{Taub-NUT metric}, whose length scale is $O(\tau^{1/2}|\log \tau|^{-1/2})$. These regions concentrate almost the entire $L^2$-Riemannian curvature of the K3 surface. 

\end{itemize}

The Gross-Wilson picture represents the \emph{best hope} on the SYZ conjecture,\footnote{As a caveat sometimes overlooked in the literature, the hyperk\"ahler rotation of the Gross-Wilson setting is not quite a polarized degeneration family, hence does not quite fit into our notion of large complex structure limit. In our perspective, Gross-Wilson is an inspiration, rather than an example of the SYZ conjecture.
	}  which includes an almost explicit description of the metric, and a special Lagrangian torus fibration exists globally on $X$. It is partially generalized to higher dimensional hyperk\"ahler manifolds with holomorphic Lagrangian abelian variety fibrations. After hyperk\"ahler rotation, these can be regarded as special Lagrangian fibrations. Tosatti et al. \cite{Tosatti} \cite{GrossTosattiZhang} established the semiflat metric asymptote in the generic region, and Gromov-Hausdorff collapse to the base manifold.

\subsection{Best hope on Calabi-Yau 3-folds}\label{BesthopeCY3}

For general information on this section, see  \cite[Chapter 8,9]{Joycebook}, and the introduction in \cite{LiTaub}.
In the initial years following the SYZ proposal, there was an overly optimistic belief based on the analogy with the hyperk\"ahler case, and based on topological and complex geometric considerations:

\begin{itemize}
\item The special Lagrangian fibration exists globally and is defined by a $C^\infty$ map, even though some fibres may be singular.

\item The discriminant locus on the base is codimension two. For Calabi-Yau 3-folds, under suitable genericity assumption, the discriminant locus is a trivalent graph, with two types of vertices, known as positive and negative vertices.\footnote{The names `positive/negative vertex' come from some old fashioned topological models of the torus fibration where the most singular fibres have Euler characteristics $\pm 1$ respectively. }

\item Along the edges of the trivalent graph, the singularity of the SYZ fibration is transversely modelled on the $I_1$ singularity.

\item The local region near the \emph{positive vertex} is complex geometrically a large open subset inside
\[
\{  z_0z_1z_2=1-z_3      \}\subset \C^3\times \C^*_{z_3}, \quad \Omega\propto \frac{1}{z_3}dz_0\wedge dz_1\wedge dz_2,
\]
while the \emph{negative vertex} region is modelled on a large open subset inside
\[
\{ z_3z_4=1-z_1-z_2 \} \subset  (\C^*)^2_{z_1,z_2}\times \C^2_{z_3,z_4} ,\quad \Omega\propto \frac{1}{z_1z_2} dz_2\wedge dz_3\wedge dz_4.
\]

\end{itemize}

The na\"ivete was challenged by Joyce \cite{Joyce}, based on his observations concerning \emph{special Lagrangian singularities}, which are markedly different from those visible in holomorphic fibrations. 
A global special Lagrangian fibration on the compact Calabi-Yau manifolds near the large complex structure, should it exist at all, is expected to have a much more subtle structure:

\begin{itemize}
	\item The special Lagrangian fibration is typically not defined by $C^\infty$ maps, but are at best piecewise smooth.

	\item The discriminant locus on the base is typically not codimension two, but the trivalent graph is expected to be thickened to a codimension one `ribbon'. The amount of thickening probably tends to zero in the large complex structure limit.

	\item The $I_1\times \R$ local singularity model does not lead to a Fredholm deformation theory for the singular special Lagrangian fibres, so should be replaced by some other singularity models.

\end{itemize}

Stepping aside from the substantial difficulties of the special Lagrangian local singularities, another major difficulty is to understand the Calabi-Yau metrics near the large complex structure limit, in complex dimension three. The optimistic expectations are inspired by the Gross-Wilson picture in complex dimension two. Hypothetically, the Calabi-Yau 3-fold is Gromov-Hausdorff close to a real 3-dimensional manifold, whose topology is believed to be the 3-sphere\footnote{This expectation comes from the topology of the essential skeleton, see section \ref{KontsevichSoibelmanconj} below.}, containing a trivialent graph, such that

\begin{itemize}
\item Away from the trivalent graph, the Calabi-Yau metric is semiflat up to exponentially small errors.

\item Transverse to the edges in the trivalent graph, the metric is modelled on the Ooguri-Vafa metric appearing in Gross and Wilson's picture.

\item  Near the positive and negative vertices of the trivalent graph, the local metric is modelled on some generalization of the Ooguri-Vafa metrics. 
\end{itemize}

It was recently realized that there exist almost canonical constructions of Ooguri-Vafa type metrics in complex dimension three, with the predicted topology and complex structure of the positive and negative vertices, constructed from a (nonlinear) generalized Gibbons-Hawking ansatz \cite{LiTaub}.  The asymptotic geometry of these Ooguri-Vafa type metrics matches with semiflat metrics in the generic region. The positive vertex metric contains a local region modelled on a generalized Taub-NUT type metric on $\C^3$, analogous to the way the Ooguri-Vafa metric contains a region modelled on the Taub-NUT metric.

The following questions are widely open:

\begin{Question}
Do such Ooguri-Vafa type metrics arise as blow up limits on any \emph{compact} Calabi-Yau manifolds near the large complex structure limit? 
\end{Question}

\begin{Question}
Can one give a gluing description of the CY metrics for 3-folds near the large complex structure, eg. in the case of quintic hypersurfaces?
\end{Question}

\begin{Question}
What kind of special Lagrangians can arise on $C^\infty$-small perturbations of these Ooguri-Vafa type metrics?
\end{Question}

\subsection{Strong vs. weak SYZ conjecture}

Due to the analytic difficulty of the SYZ conjecture, especially the nongeneric regions with large Riemannian curvature, the literature has developed many interpretations of the conjectures, with somewhat diverging goals. 

\begin{itemize}
    \item  (\textbf{Soft versions}) For applications to homological mirror symmetry, one is primarily interested in constructing Lagrangian fibrations without requiring $\text{Im}(\Omega)|_L=0$, and therefrom build a mirror manifold $X^\vee$ and prove the categorical predictions $D^bCoh(X^\vee)\simeq D^\pi Fuk(X,\omega)$. This viewpoint separates the symplectic and holomorphic data of the Calabi-Yau manifold, and has a topological/algebraic flavour.

    \begin{rmk}
    The \emph{special Lagrangian} condition usually left out of homological mirror symmetry discussions, is supposedly related to \emph{Bridgeland stability conditions}, which is a popular categorical interpretation of the BPS condition on D-branes.    % Imposing stability involves hard analytical problems which go beyond homological mirror symmetry.	
    \end{rmk}

    \item (\textbf{Strong metric version}) On compact Calabi-Yau manifolds near the large complex structure limit, find a global special Lagrangian torus fibration. 
    
    \item  (\textbf{Weak metric version}) Prove for a suitable class of compact Calabi-Yau manifolds near the large complex structure limit that a special Lagrangian $T^n$-fibration exists on a large subset with at least $99\%$ of the Calabi-Yau measure. More precisely, the percentage converges to $100\%$ in the limit.

\end{itemize}

The metric versions are highly sensitive to the Calabi-Yau metric, which is a much more rigid structure compared to the soft versions. They conform to the PDE spirit of the SYZ paper, while the soft versions are closer to the mirror symmetry motivations of the SYZ conjecture.

The strong metric version is perhaps the most faithful to the original intention of the SYZ paper. Its main evidence comes from the special case of hyperk\"ahler metrics, mentioned in section \ref{GrossWilson}. Other peripheral evidence comes from the construction of Lagrangian fibrations in many examples, ignoring the $\text{Im}(\Omega)|_L=0$ condition \cite{MatessiLag}. There are however many subtleties besetting this strong version, mentioned in section \ref{BesthopeCY3}, making the conjecture very formidable, and by comparison the supporting evidence seems  inadequate. Notably, the special Lagrangian singularities are not sufficiently understood, and we are not aware of any argument that definitively rules out special Lagrangians intersecting each other in the non-generic regions with large Riemannian curvature. As food for future thought, a somewhat weakened version bypassing these possible objections is

\begin{Question}
Given a compact Calabi-Yau manifold $X$ sufficiently near the large complex structure limit, is there an $n$-parameter family of special Lagrangian currents, whose supports sweep out all points on $X$?
\end{Question}

%\begin{itemize}
 %   \item Joyce \cite{Joyce} pointed out that special Lagrangian fibrations are usually not defined by $C^\infty$ maps, which sharply contrast with holomorphic fibrations whose $C^\infty$ property is automatic. A closely related issue is that the discriminant locus of a typical special Lagrangian fibration has codimension one, instead of the na\"ive expectation of codimension two. 
    
%    \item Starting in complex dimension 3, we lack local models of special Lagrangian torus fibrations. The knowledge of special Lagrangian singularities is also inadequate. 
    
%    \item Within current technology, constructing special Lagrangians requires very precise description of the Calabi-Yau metric. Extracting metric information in highly curved regions is a  significant technical hurdle in $\dim=3$, even though there exist plausible guesses on the local metric model \cite{LiTaub}. 
    
%    \item Even if there exists a family of special Lagrangians that sweep out the entire Calabi-Yau manifold, there is insufficient evidence that these do not intersect each other. 
%\end{itemize}

The weak metric version, on the other hand, concerns only the generic region, which is more accessible than the strong version. It conforms to the more cautious expectation, that the special Lagrangian fibration is only a limiting phenomenon. The bulk of the survey will focus on this weak version.

\section{Large complex structure limit}\label{Largecomplexstructurelimit}

The SYZ paper does not make precise the notion of large complex structure limit, and several non-equivalent interpretations are available in the current literature. We shall place the large complex structure limit in the framework of \textbf{polarized degenerations}. Intuitively, a polarized degeneration is when we fix the symplectic structure inside an integral class, and vary the complex structure in an algebraic one-parameter family so that it becomes singular in the limit. The large complex structure limit is the additional requirement that the degeneration is `as severe as possible'.

We work over $\C$. To set the scene,

\begin{itemize}
	\item 
	Let $S$ be a smooth affine algebraic curve, with a point $0\in S$.
	An \emph{algebraic degeneration family} is given by a submersive projective morphism $\pi: X\to S\setminus \{0\}$ with smooth connected $n$-dimensional fibres $X_t$ for $t\in S\setminus \{0\}$. This is in contrast with the \emph{formal} setting over the punctured formal disc $\text{Spec}(K)$ with $K=\C(\!(t)\!)$.
	An algebraic degeneration induces a formal degeneration by base change.
	
	\item  
	A \emph{polarisation} is given by an ample line bundle $L$ over $X$. This specifies the K\"ahler class, up to rescaling conventions.

	\item 
	We say $\pi$ is a degeneration family of \emph{Calabi-Yau} manifolds if there is a trivialising section $\Omega$ of the canonical bundle $K_X$. Over a small disc $\mathbb{D}_t$ around $0\in S$, this induces holomorphic volume forms $\Omega_t$ on $X_t$ via $\Omega= dt\wedge \Omega_t$. The \emph{normalised Calabi-Yau measure} on $X_t$ is the probability measure
	\begin{equation}\label{CalabiYaumeasureeqn}
	d\mu_t= \frac{   \Omega_t \wedge \overline{\Omega}_t }{   \int_{X_t} \Omega_t \wedge \overline{\Omega}_t   }.
	\end{equation}
	The Calabi-Yau metrics $\omega_{CY,t}$ on $X_t$ are the unique K\"ahler metrics in the class $\frac{1}{ |\log |t| |  }c_1(L)$ such that 
	\begin{equation}
	\frac{ \omega_{CY,t}^n }{  \int_{X_t} \omega_{CY,t}^n  }=d\mu_t.
	\end{equation}

	\item 
	We say $\pi: X\to S\setminus \{ 0\}$ is a \textbf{large complex structure limit} of Calabi-Yau manifolds if the essential skeleton has the maximal dimension $n$ (to be explained below), and the degeneration family admits a semistable snc model over $S$.

\end{itemize}

\begin{rmk}\label{sncmodel}
Filling in the central fibre at $0\in S$ would involve the choice of a \emph{model} of $\pi: X\to S$, namely a normal flat projective $S$-scheme $\mathcal{X}$ together with an isomorphism with $X$ over the punctured curve $S\setminus \{ 0\}$. It is called an \emph{snc model} if $\mathcal{X}$ is smooth, and
	the central fibre over $0\in S$  is a simple normal crossing divisor in $\mathcal{X}$, such that the intersections of divisors are irreducible or empty. If furthermore the central fibre is reduced, it is called a \emph{semistable snc model}. Models can be analogously defined over the formal disc. The existence of snc models is a consequence of Hironaka's resolution theorem. They are highly nonunique. By the semistable reduction theorem \cite[chapter 2]{ToroidalembeddingsI}, after finite base change to another smooth algebraic curve $S'$, we can always find some semistable snc model for the degeneration family $X\times_S (S'\setminus \{0\})$, so the existence of a semistable snc model is not a substantial assumption. Everything here is quasi-projective. The choice of a model is very useful, but not intrinsic to the degenerating CY metrics.
\end{rmk}

\begin{eg}
A typical example of large complex structure limit is the Fermat family of Calabi-Yau hypersurfaces
\[
X_t=\{  Z_0Z_1\ldots Z_{n+1}+ t \sum_0^{n+1} Z_i^{n+2}=0          \}\subset \mathbb{CP}^{n+1}.
\]
As $t\to 0$, the algebraic limit is the union of $n+2$ projective planes. Intuitively, the central fibre is highly reducible, and the degeneration is very severe.
\end{eg}

\begin{eg}
Consider a family of quartic K3 surfaces degenerating to a nodal K3 surface. This is a typical example of a polarized degeneration which is \emph{not} a large complex structure limit. In fact, the central fibre is irreducible, and the nodal singularity is mild (Kawamata log terminal in the birational geometry terminology).
\end{eg}

\subsection{Volume asymptote and essential skeleton}\label{volumeasymptoteessentialskeleton}

Consider an algebraic Calabi-Yau degeneration family $X\to S\setminus \{0 \}$ as above. We  follow \cite{Boucksom1} to consider the asymptote of $\int_{X_t} \Omega_t \wedge \overline{\Omega}_t$ as $t\to 0$. Along the way, we will introduce the concept of dual intersection complexes and essential skeletons, which are simplicial complexes encoding the intersection patterns of divisors on the central fibre. An important lesson is that the measure theoretic limit of the Calabi-Yau manifolds is closer to simplicial complexes than algebraic varieties, indicating that the metric limit must be significantly different from Fubini-Study metrics associated to projective embeddings of bounded degree.

%Since we only care about small $t$, we are free to shrink $S$. For instance, we may assume $dt$ is nowhere vanishing on $S$. 

A very useful tool is to fill in the central fibre by choosing an \emph{snc model} (\cf Remark \ref{sncmodel}) $\mathcal{X}$ over $S$. The central fibre $\mathcal{X}_0$ is an snc divisor with components $E_i$ for $i\in I$, and we write $\mathcal{X}_0= \sum_{i\in I} b_i E_i$. In the special case of \emph{semistable snc models} $b_i=1$ for $i\in I$; this can always be achieved after finite base change. The canonical divisor $K_{\mathcal{X}}$ is supported on $\mathcal{X}_0$ as $K_X$ has a trivialising section $\Omega$. We may write $K_{\mathcal{X} }=\sum_i (a_i+b_i-1 )E_i $, so that the \emph{relative log canonical divisor}
\[
K^{log}_{\mathcal{X}/S  }:= K_{\mathcal{X}}- K_S + \mathcal{X}_{0,red}- \mathcal{X}_0=    \sum a_i E_i.
\]
Shifting all $a_i$ by a constant $\kappa$ is equivalent to multiplying $\Omega$ by $t^\kappa$, which gives an elementary factor $|t|^{2\kappa }$ to $\int_{X_t} \Omega_t \wedge \overline{\Omega}_t$. Thus we shall always assume $\min a_i=0$.

It is useful to introduce a \emph{quantitative stratification} on $X_t$ according to the intersection pattern of $E_i$. Let $E_J=\cap_{i\in J} E_i$ for $J\subset I$, which is irreducible if nonempty. Using the distance function of a fixed smooth background K\"ahler metric on $\mathcal{X}$, we can write 
\[
E_J^0=\{  q\in X_t| d(q, E_J) \ll 1            \} \setminus \{   q\in X_t| d(q, E_{J'}) \ll 1   , \quad \text{some } J'\supsetneq J           \}.
\]
Around $\emptyset \neq E_J\subset \mathcal{X}$, we denote $p=|J|-1$,
and introduce local coordinates $z_0, \ldots z_n$ on $\mathcal{X}$, such that $z_0, z_1, \ldots, z_p$ are the defining equations of $E_i$ for $i\in J$. The conditions on the divisors mean that away from deeper strata we may arrange $t= \prod_0^p z_i^{b_i}$, and
\[
\Omega= u_J  \prod_0^p z_i^{a_i+b_i} d\log z_i \wedge \prod_{p+1}^n dz_j
\]
for some local nowhere vanishing holomorphic function $u_J$. By definition $\Omega=dt \wedge \Omega_t$ along $X_t$, so on $E_J^0$
\[
\Omega_t = b_0^{-1} u_J z_0^{a_0}\ldots z_p^{a_p} \prod_1^p d\log z_i \wedge \prod_{p+1}^n dz_j,
\]
\[
\sqrt{-1}^{n^2} \Omega_t\wedge \overline{\Omega}_t = |b_0|^{-2} |u_J|^2 |z_0|^{2a_0 }\ldots |z_p|^{2a_p} \prod_1^p \sqrt{-1} d\log z_i \wedge d\log \bar{z}_i \wedge \prod_{p+1}^n \sqrt{-1} dz_j \wedge d\bar{z}_j.
\]
Notice also that the local equation $t= \prod_0^p z_i^{b_i}$ has $b_J= \gcd_{i\in J} b_i$ sheets of solutions. Using the polar coordinates by $z_i= e^{ x_i\log |t|  + \sqrt{-1}\theta_i }$ for $i\in J$, ones sees that the magnitude of $\int_{E_J^0} \sqrt{-1}^{n^2} \Omega_t\wedge \overline{\Omega}_t$ is  $O(|\log |t| |^l  )$ for $l= |\{ j\in J: a_j=0  \} |-1$.

The local logarithmic variables $x_i=\frac{ \log |z_i|}{ \log |t| }$ lie on the simplex
\[
\Delta_J= \{  \sum_0^p b_i x_i =1, \quad 0\leq x_i \leq 1 \}.
\]
These depend on the choice of $z_i$, but since the local defining equation of divisors differ by a nowhere vanishing holomorphic function, the ambiguity of $x_i$ is only $O( \frac{1}{ |\log |t|| } )$ for $0<|t|\ll 1$. Taking a more global viewpoint, the combinatorial pattern of how these simplices fit together exactly reflects the intersection pattern of the divisors $E_i$. Formally, this information is encoded in the \textbf{dual intersection complex} $\Delta_{\mathcal{X}}$ for the snc model $\mathcal{X}$: this is the polyhedral complex whose vertices $v_i$ correspond to $E_i$, and we assign a simplex $\Delta_J$ with vertices $v_i$ for $i\in J$ if and only if $E_J\neq 0$. The coodinates $x_j$ then define a \emph{piecewise integral affine structure} on $\Delta_{\mathcal{X}}$. Up to the above $O(\frac{1}{ |\log |t||  }  )$ ambiguity, we now have a \emph{logarithm map} $\text{Log}_{\mathcal{X}}: X_t\to \Delta_{\mathcal{X}}$, locally described by $x_i=\frac{ \log |z_i|}{ \log |t| }$. Consequently, the \textbf{`hybrid' space} $X  \sqcup \Delta_{\mathcal{X}}$ is equipped with a natural topology, so that a sequence of points $z_k\in X_t$ converges to $x\in \Delta_{\mathcal{X}}$ iff $t\to 0$ and $\text{Log}_{\mathcal{X} }(z_k)\to x$. 
The name `hybrid' refers to the mixture of algebraic varieties with simplicial objects, which is better suited for measure theoretic limits, than the algebraic family $\mathcal{X}$.

The measure also singles out a distinguished subcomplex $Sk(\mathcal{X})$, called the \textbf{essential skeleton}, consisting of the simplices in $\Delta_{\mathcal{X} }$ whose vertices correspond to $E_i$ with $a_i=0$. This is where the limit of the normalised CY measure is supported. The dimension of $Sk(\mathcal{X})$ is a measurement of how transcendental the degeneration $X$ is; it is reflected by the growth order of $\int_{X_t} \Omega_t\wedge \overline{\Omega}_t$. The largest possible value for the dimension is $n$.

In the case of a \textbf{large complex structure limit}, $\dim_\R Sk(\mathcal{X})=n$. Let us analyze the CY measure more explicitly, in a semistable snc model. For $E_J$ corresponding to an $n$-dimensional simplex in $Sk(\mathcal{X})$, on $E_J^0$
\begin{equation}
\sqrt{-1}^{n^2} \Omega_t\wedge \overline{\Omega}_t =  |u_J|^2  \prod_1^n \sqrt{-1} d\log z_i \wedge d\log \bar{z}_i.
\end{equation}
Here $u_J$ limits to its value $u_J(E_J)$ at the point stratum $E_J$, which is called the Poincar\'e residue of $\Omega$, and is easily seen to be independent of the choice of coordinates $z_i$. It is a consequence of the residue theorem on Riemann surfaces that $|u_J(E_J)|^2$ is independent of such $J$ \cite[Thm. 7.1]{Boucksom1}. Thus the pushforward to $\Delta_{\mathcal{X}}$ of the normalised CY measure (\ref{CalabiYaumeasureeqn})  converges smoothly in the interior of $\Delta_J$ to a constant multiple of the \emph{Lebesgue measure}:
\begin{equation}\label{measureconvergence}
\text{Log}_{ \mathcal{X} *  }d\mu_t= \text{Log}_{ \mathcal{X}*  }\frac{ \Omega_t\wedge \overline{\Omega}_t }{  \int_{X_t}  \Omega_t\wedge \overline{\Omega}_t    } \xrightarrow{t\to 0}    d\mu_0:=\text{Const}\cdot dx_1\ldots dx_n.
\end{equation}
Notice $dx_1\ldots dx_n$ is canonically defined due to the presence of an integral affine structure on $\Delta_J$. Viewed as a measure on $\Delta_{\mathcal{X}}$, the limit $d\mu_0$ has null measure on the complement of the $n$-dimensional faces of $Sk(\mathcal{X})$, as the integral of $d\mu_t$ in the corresponding region is $O(\frac{1}{ |\log |t|| } )$. The constant in (\ref{measureconvergence}) is independent of $J$ and its sole purpose is to make $d\mu_0$ a probability measure.

\subsection{The effect of blow up}\label{Blowup}

A major caveat is that snc models are highly non-unique, because one can always blow up a given snc model along some locus inside the central fibre. The intrinsic information concerning polarized degenerations, such as the limiting behaviour of metrics and volume measures, are independent of particular snc models.

The effect of blow up on the dual intersection complex and essential skeleton is well understood (\cf \cite[A.4]{KS}). The intuitive picture is as follows:

\begin{itemize}
\item If we blow up an snc model $\mathcal{X}$ along a smooth irreducible subvariety properly contained inside $E_J=\cap_{j\in J} E_j$, but not contained in any smaller intersection stratum, then the new dual intersection complex contains $\Delta_{\mathcal{X}}$, while introducing a new vertex, and some new wings over certain faces of $\Delta_{\mathcal{X}}$. The essential skeleton remains intact.

\item If we blow up an snc model $\mathcal{X}$ along some $E_J=\cap_{j\in J} E_j$, then the new dual intersection complex is a subdivision of $\Delta_{\mathcal{X}}$. If the simplex $\Delta_J$ corresponding to $E_J$ is a face of the essential skeleton $Sk(\mathcal{X})$, then there is an induced subdivision on $Sk(\mathcal{X})$, and otherwise the essential skeleton remains intact.   
\end{itemize}

While $\Delta_{\mathcal{X}}$ generally  becomes larger under blow ups, the essential skeleton $Sk(\mathcal{X})$ is only subdivided, and its piecewise affine structure (in particular its homeomorphism type) is a \textbf{birational invariant}, denoted as $Sk(X)$. The birational invariance is not surprising: the essential skeleton is the measure theoretic limit of $X_t$, a property independent of the choice of models.

\subsection{Kontsevich-Soibelman conjecture}\label{KontsevichSoibelmanconj}

In an attempt to extract limiting information from the SYZ picture, Kontsevich and Soibelman \cite{KS2}\cite{KS} proposed the following picture. Let $X_t$ be a polarized degeneration of Calabi-Yau manifolds near the large complex structure limit, then

\begin{itemize}
\item (K\"ahler class normalization) The rescaled metrics $\omega_{CY,t}\in \frac{1}{|\log |t||}c_1(\mathcal{O}(1))$ have nontrivial \emph{finite diameter} Gromov Hausdorff subsequential limits.

\item There is an \emph{affine structure} on the essential skeleton $Sk(X)$ away from a Hausdorff \emph{codimension two} singular subset. On the smooth locus, there is a Riemannian metric obtained as the Hessian of local solutions to the \emph{real Monge-Amp\`ere equation}. This metric agrees with the \emph{Gromov-Hausdorff limit}, which is conjecturally independent of the subsequence.

\item (Topology) Under the strict Calabi-Yau condition $h^{p,0}(X_t)=0$ for $p<n$, the essential skeleton is homeomorphic to $S^n$.

\end{itemize}

The heuristic idea is that the essential skeleton should be the base of the hypothetical SYZ fibration, and the SYZ fibration is approximated by logarithm maps, at least in the generic region. We briefly comment on the status of the Kontsevich-Soibelman conjecture (\cf also \cite[section 4.5]{LiNA}):

\begin{itemize}
\item The uniform diameter estimate independent of small $t$ \[
C^{-1}\leq \text{diam}(X,\omega_{CY,t})\leq C,
\]
is recently established in joint work with Tosatti \cite{LiTosatti}, using primarily Riemannian geometric methods. This together with Gromov compactness proves the K\"ahler class normalization prediction.

\item 
The prediction about the existence of a metrically preferred affine structure away from \emph{codimension two} on the base, is part of the general lore of the SYZ conjecture (\cf section \ref{BesthopeCY3}), even though there is insufficient evidence. Generally speaking, the simplicial complex structure on $Sk(X)$ induces a piecewise affine structure, and improving it to an affine structure away from codimension two, would require highly nontrivial choices. The author is not aware of a completely satisfactory answer to the following elementary question:

\begin{Question}
Given a one-parameter family of quartic K3 surfaces near the large complex structure limit, without special symmetry, how can we determine the location of singular points on $Sk(X)$? How can we write down the affine structure on the regular locus explicitly?
\end{Question}

\item 

The topology of the essential skeleton is an active research topic in birational geometry. 
The homeomorphism between $Sk(X)$ with $S^n$ is verified for many examples. In general, 
it is known \cite{NicaiseXu}\cite{NicaiseXuYu} that $Sk(X)$ is a `pseudomanifold', its rational homology groups agree with $S^n$, and its fundamental group has trivial profinite completion, but the actual homeomorphism type is still elusive, and supposedly requires appealing to the Poincar\'e conjecture.
\end{itemize}

\begin{rmk}
Gromov-Hausdorff convergence is a standard way to make sense of weak limits, but alternative weak notions are possible. Kontsevich and Soibelman \cite{KS}\cite{KS2} aim to establish non-archimedean geometry as a suitable framework for studying limits of Calabi-Yau manifolds and the consequences for mirror symmetry. Kontsevich and Tscinkel \cite{KontsevichTschinkel} initiated the attempt to build up non-archimedean pluripotential theory by imitating K\"ahler geometry, a task taken much further by Boucksom et al. \cite{Boucksom}\cite{Boucksom1}\cite{Boucksomsemipositive}\cite{Boucksomsurvey} (\cf section \ref{NAsurvey}). Kontsevich and Soibelman may have anticipated long before any rigorous definitions, that the Calabi-Yau metrics should converge in some \emph{potential theoretic} sense to a non-archimedean object, and the limiting information should be read off purely in terms of data on the essential skeleton.

\end{rmk}

\section{Analytical foundations}

\subsection{Yau's solution to the Calabi conjecture}

A cornerstone of K\"ahler geometry is Yau's celebrated proof of the Calabi conjecture, which implies

\begin{thm}\cite{Yau}\label{Calabiconjecture}
	A compact K\"ahler manifold $X$ with a nowhere vanishing holomorphic volume form $\Omega$ admits a unique Calabi-Yau metric $(g,\omega, J,\Omega)$ within any given K\"ahler class. 
\end{thm}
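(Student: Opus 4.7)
The plan is to recast the theorem as the solvability of a complex Monge-Ampère equation and attack it via the continuity method. Fix a reference Kähler metric $\omega$ in the prescribed class $[\omega]$. By the $\partial\bar\partial$-lemma, any other Kähler metric in the same class is of the form $\omega_\phi = \omega + dd^c\phi$ for a smooth function $\phi$, unique up to an additive constant. Since $\Omega\wedge\overline\Omega$ and $\omega^n$ are both smooth positive volume forms, we may write $c\,\Omega\wedge\overline\Omega = e^F \omega^n$ for some smooth function $F$, with the constant $c$ adjusted so that $\int_X e^F \omega^n = \int_X \omega^n$. The Calabi-Yau equation $\omega_\phi^n = c\,\Omega\wedge\overline\Omega$ then reduces to
\[
(\omega + dd^c\phi)^n = e^F \omega^n, \qquad \omega + dd^c\phi > 0,
\]
and I would normalize $\phi$ by $\sup_X \phi = 0$, say.

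Next I would run the continuity method on the family
\[
(\omega + dd^c\phi_s)^n = e^{sF + c_s}\omega^n, \quad s \in [0,1],
\]
where $c_s$ is the unique constant making the right-hand side integrate to $\int_X \omega^n$. At $s=0$, $\phi_0 = 0$ and $c_0 = 0$. Let $S \subset [0,1]$ be the set of $s$ for which a smooth solution exists with $\omega_{\phi_s}$ positive. Openness of $S$ follows from the implicit function theorem applied between suitable Hölder spaces: the linearization at a solution $\phi_s$ is $\tfrac{1}{2}\Delta_{\omega_{\phi_s}}$ acting on functions of zero mean, which is an isomorphism between $C^{2,\alpha}_0$ and $C^{0,\alpha}_0$ by standard elliptic theory. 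Closedness is the substantive content and will rest on a priori $C^k$ estimates independent of $s$.

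The main obstacle is precisely the a priori estimates. I would proceed in the classical order: first a $C^0$ estimate on $\phi_s$, which Yau established by Moser iteration exploiting the Sobolev inequality on $(X,\omega)$ together with the structure of the Monge-Ampère equation (alternatively, Kołodziej's pluripotential approach gives it more cheaply); second a $C^2$ estimate, obtained from the Aubin-Yau inequality by applying the Laplacian $\Delta_{\omega_\phi}$ to $\log(n + \Delta_\omega \phi) - A\phi$ for $A$ sufficiently large relative to a lower bound on the bisectional curvature of $\omega$, which gives a bound on $\Delta_\omega \phi$ once $\|\phi\|_{C^0}$ is controlled; third, the Calabi $C^3$ estimate on the third derivatives of $\phi$ (or, cleaner in modern presentations, the Evans-Krylov interior $C^{2,\alpha}$ estimate for concave fully nonlinear equations, since $\log\det$ of a Hermitian matrix is concave). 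Schauder bootstrap then upgrades to $C^{k,\alpha}$ for all $k$. The hard step is the $C^2$ bound, which truly uses the complex Monge-Ampère structure and not just general fully nonlinear theory.

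Finally, uniqueness follows from a short maximum-principle argument: if $\phi_1,\phi_2$ both solve $(\omega + dd^c\phi_i)^n = e^F\omega^n$, set $\psi = \phi_1 - \phi_2$ and expand
\[
0 = \omega_{\phi_1}^n - \omega_{\phi_2}^n = dd^c\psi \wedge \sum_{k=0}^{n-1} \omega_{\phi_1}^k \wedge \omega_{\phi_2}^{n-1-k}.
\]
Multiplying by $\psi$ and integrating by parts shows $d\psi = 0$, so $\psi$ is constant, which pins down the metric $\omega_{\phi_1} = \omega_{\phi_2}$.
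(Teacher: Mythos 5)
Your proposal is correct and follows essentially the same route as the paper's sketch of Yau's proof: the continuity method with openness from the implicit function theorem (linearization the Laplacian), the $C^0$ bound by Moser iteration, the $C^2$ bound by the maximum principle applied to $\log(n+\Delta_\omega\phi)-A\phi$, Evans--Krylov (or Calabi's third-order estimate) to reach $C^{2,\alpha}$ and bootstrap, and uniqueness by the standard integration-by-parts argument. Nothing essential is missing.
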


We now give a very sketchy account of (the modern view on) Yau's proof of Theorem \ref{Calabiconjecture}. Fix a background K\"ahler metric $\omega$ in the given class, and the task is to find a K\"ahler potential $\phi$ solving the \textbf{complex Monge-Amp\`ere equation} for any given density function $e^f$ satisfying the cohomological constraint $\int_X e^f\omega^n=\int_X\omega^n$,
\begin{equation}\label{Calabiconjecture1}
\omega_\phi^n=(\omega+ dd^c \phi)^n = e^f \omega^n, \quad \int_X \phi \omega^n=0,
\end{equation}
In particular for $e^f\omega^n= \text{const}\cdot \Omega\wedge \overline{\Omega}$ one obtains the Calabi-Yau metric. The uniqueness follows from a simple integration by parts argument.

The existence proof follows the continuity method, namely to deform through the space of K\"ahler metrics as $f$ deforms from zero to the desired function. Viewing (\ref{Calabiconjecture1}) as defining a map from the potential to the density function, we need to show the map is \textbf{submersive} and \textbf{proper}. The submersion property is a consequence of standard Hodge theory on the Laplacian. To show properness one needs \textbf{a priori estimates} on $\phi$ depending only on $X$ and bounds on $f$, and for that matter we are free to impose any order of regularity on $f$.

The first step is to bound $\norm{\phi}_{C^0}$ (known as `$C^0$-estimate'), using a technique called \textbf{Moser iteration}. We rewrite the equation as
\[
(1-e^f) \omega^n= -dd^c \phi \wedge (\omega^{n-1}+\omega^{n-1}\wedge \omega_\phi+\ldots+ \omega_\phi^{n-1}      ).
\]
For any $p>1$, multiply the equation by $\phi|\phi|^{p-2}$ and integrate by parts,
\begin{equation*}
\int_X |\nabla |\phi|^{p/2} |_{\omega}^2 \omega^n \leq \frac{C(n, \norm{f}_{L^\infty}  )p^2}{p-1} \int_X   |\phi|^{p-1} \omega^n.
\end{equation*}
The key is that a weaker norm on RHS controls a stronger norm on LHS. This is reverse to the direction of the Poincar\'e inequality 
\begin{equation*}
\int_X u^2 \omega^n \leq C\int_X |du|_\omega^2 \omega^n, \quad \int_X u=0,
\end{equation*}
and the \textbf{Sobolev inequality}
\begin{equation*}
\int_X |u|^{ \frac{2n}{n-1} } \omega^n \leq C (\int_X |du|^2_\omega \omega^n + \int_X |u|^2 \omega^n ).
\end{equation*}
The combined force is that $\norm{\phi}_{L^2_1} \leq C$, and one can inductively bound $\norm{\phi}_{L^p}$ for large $p$, which turns out to be uniform in $p$. Taking the limit $p\to \infty$, this gives a bound $\norm{\phi}_{L^\infty}\leq C(X, \norm{f}_{L^\infty})$.

The second step is to bound $\norm{ dd^c\phi}_{C^0}$ (known as $C^{1,\bar{1}}$-estimate), \ie to prove a uniform equivalence between $\omega$ and $\omega_\phi$. The starting point is 
a differential inequality by local calculations (in the analyst's convention of Laplacian)
\[
\Lap_{\omega_\phi} \log \Tr_\omega \omega_\phi \geq -C \Tr_{\omega_\phi} \omega- C .
\]
Subtracting a large enough multiply of the identity
\[
\Lap_{\omega_\phi} \phi=n- \Tr_{\omega_\phi} \omega,
\]
we get a differential inequality
\[
\Lap_{\omega_\phi}( \log \Tr_\omega \omega_\phi - C\phi)   \geq  \Tr_{\omega_\phi} \omega- C' .
\]
Using the a priori bound on $\phi$, 
an application of \textbf{maximum principle} then shows $\Tr_\omega \omega_\phi  \leq C$.

\begin{rmk}
	This $C^{1,\bar{1} }$-estimate argument has \textbf{global nature}: if we only know (\ref{Calabiconjecture1}) on a standard unit ball in $\C^n$ with a given $C^0$-bound on $\phi$, there are counterexamples for the $C^{1,\bar{1} }$-bound. The solution can develop singularities.	
\end{rmk}

The third step is to get higher order estimates. Standard elliptic theory implies it is sufficient to have a $C^{2,\alpha}$ bound on $\phi$. Evans-Krylov theory bridged the gap between $C^{1,\bar{1} }$-bound and $C^{2,\alpha}$-bound (\cf \cite{Siu} Chapter 2, Section 4 for details). This argument is of \textbf{local nature}, and crucially uses that $\log \det(\partial_i\partial_{\bar{j}} \phi  )$ is a \emph{concave} function of the matrix $(\partial_i\partial_{\bar{j}} \phi)$, and the main tool is a Harnack inequality. The geometric insight is that higher order regularity is a manifestation of the local Euclidean nature of K\"ahler manifolds.

Yau's proof strategy is highly influential and permeates the vast majority of works on CY metrics. The brief sketch above, however, highlights two reasons why it is difficult to adapt to the setting of SYZ conjecture:

\begin{itemize}
\item The CY metrics undergoing large complex structure limit are highly degenerate, to the extent that the Sobolev constant becomes too big, so that the Moser iteration technique does not give useful $C^0$-estimate on the potential.

\item The $C^{1,\bar{1}}$ bound has global nature, so in order to obtain useful estimates on the Calabi-Yau metric only in the generic region, the design of the maximum principle must hold globally. This is hard on highly degenerate manifolds, where Riemannian curvature cannot be globally uniformly bounded.

\end{itemize}

For these reasons, our approach to the SYZ conjecture has significant departure from Yau's proof. The potential estimates use \textbf{complex pluripotential theory} instead, and 
the metric estimate in the generic region uses a deep theorem of Savin in \textbf{elliptic PDE theory}, bypassing Yau's estimates.

\subsection{Complex pluripotential theory}

Complex pluripotential theory concerns the study of weak notions of K\"ahler potentials, and their wider implications on algebraic geometry and PDEs. They can be viewed as the generalization of potential theory on Riemann surfaces to several complex variables. Some common themes include:

\begin{itemize}
\item The weak compactness theory for the space of potentials. This is suited for variational methods in K\"ahler geometry.

\item The relations between potential theory and algebraic geometry, via Hodge theory, $\bar{\partial}$-operator, intersection theory, etc. This provides transcendental methods to birational geometry.

\item The weak formulation of the complex Monge-Amp\`ere equation, and a priori potential estimates under weak assumptions on the volume density, or in the presence of singularities for the ambient complex variety. This is useful for studying singularity formation of canonical metrics.

\end{itemize}

Unlike Yau's proof of the Calabi conjecture, whose techniques are typical of elliptic PDEs, complex pluripotential theory is more akin to complex analysis, and frequently provides stronger results. To build up the intuition, we will first review the standard versions in the literature, before stating our uniform versions, which are foundational to our approach to the SYZ conjecture.

\subsubsection{Skoda inequality}

An upper semicontinuous $L^1$-function $\phi$ on a coordinate ball in $\C^n$ is called plurisubharmonic (psh)  if
it satisfies the sub mean value inequality when restricted to complex lines; this implies
$dd^c \phi\geq 0$. The basic intuition is that regularity properties for psh functions in general dimensions  are analogous to subharmonic functions on Riemann surfaces. This is captured by the local version of the \emph{Skoda inequality}:

\begin{thm}\label{Skodabasicversion}
	(\cf \cite[Thm 3.1]{Zeriahi}) If $\phi$ is psh on $B_2\subset \C^n$, with $\int_{B_2} |\phi| \omega_E^n \leq 1$ with respect to the standard Euclidean metric $\omega_E$, then
	there are dimensional constants $\alpha$, $C$, such that 
	\[
	\log \int_{B_1} e^{-\alpha \phi} \omega_E^n \leq  C.
	\]

\end{thm}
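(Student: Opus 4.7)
The plan is to normalize via the sub-mean value inequality so that $\phi \leq 0$ on a slightly larger ball, and then extract the exponential integrability from a uniform bound on Lelong numbers, via the classical Skoda theorem.

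First I would apply the sub-mean value inequality for plurisubharmonic functions: for any $z \in B_{3/2}$, choosing radius $1/4$,
\[
\phi(z) \leq \frac{1}{\text{Vol}(B_{1/4}(z))} \int_{B_{1/4}(z)} \phi \, \omega_E^n \leq C \int_{B_2} |\phi| \, \omega_E^n \leq C
\]
for a dimensional constant $C$. Replacing $\phi$ by $\phi - C$ only multiplies the conclusion by $e^{\alpha C}$, so without loss of generality $\phi \leq 0$ on $B_{3/2}$, with $\int_{B_{3/2}} |\phi| \, \omega_E^n \leq C'$.

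Next I would control the Lelong numbers uniformly on $B_1$. Integration by parts against a smooth cutoff $\chi$ supported in $B_{3/2}$ with $\chi = 1$ on $B_{5/4}$ gives
\[
\int_{B_{5/4}} dd^c \phi \wedge \omega_E^{n-1} \leq \int \phi \, dd^c\chi \wedge \omega_E^{n-1} \leq C \int_{B_{3/2}} |\phi| \, \omega_E^n \leq C,
\]
so the trace Monge--Amp\`ere mass is bounded. Combined with the standard estimate relating the Lelong number at $z_0$ to the logarithmic decay of $\phi$ along concentric balls around $z_0$, this yields a uniform dimensional bound $\nu(\phi, z_0) \leq \nu_0$ for every $z_0 \in B_1$.

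Finally, I would invoke the classical local Skoda integrability theorem: whenever a psh function $\phi \leq 0$ on $B_{5/4}$ has Lelong numbers bounded by $\nu_0$ on $B_1$, the function $e^{-\alpha \phi}$ is locally integrable for $\alpha < 2n/\nu_0$, with a quantitative bound depending only on $\alpha$, $\nu_0$, and $n$. The cleanest proof is via H\"ormander's $L^2$-estimates for $\bar{\partial}$ with the weight $e^{-\alpha \phi}$, producing a multiplier ideal sheaf whose integrability threshold is controlled by the Lelong number. The main obstacle is precisely this last step: ensuring the constants $\alpha$ and $C$ depend only on the dimension, and not on finer properties of $\phi$, requires the Lelong bound to be genuinely uniform across $z_0 \in B_1$. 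The plurisubharmonic structure is essential here, since a merely subharmonic function on $\mathbb{R}^{2n}$ could have Green-type singularities $|z-w|^{2-2n}$ that defeat any exponential integrability in dimension $n \geq 2$.
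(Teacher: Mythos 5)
The paper itself does not prove this statement: it is quoted from \cite[Thm 3.1]{Zeriahi}, whose mechanism is capacity-theoretic (a Chern--Levine--Nirenberg bound on the Monge--Amp\`ere capacity of the sublevel sets $\{\phi<-s\}$ combined with an Alexander--Taylor type volume--capacity inequality, which yields the exponential volume decay with uniform constants built in). So your attempt must be judged against that route. Your first two steps are correct and standard: the sub-mean value inequality gives $\sup_{B_{3/2}}\phi\le C(n)$ and reduces to $\phi\le 0$ with $\|\phi\|_{L^1(B_{3/2})}\le C'(n)$, and the integration by parts against a cutoff bounds the trace mass $\int_{B_{5/4}}dd^c\phi\wedge\omega_E^{n-1}$ by a dimensional constant, whence, by the monotonicity of $r\mapsto r^{2-2n}\int_{B_r(z_0)}dd^c\phi\wedge\omega_E^{n-1}$, a uniform dimensional bound $\nu(\phi,z_0)\le\nu_0$ for $z_0\in B_1$.

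The gap is in your final step, in three respects. First, the integrability range is misquoted: Skoda's theorem gives $e^{-\alpha\phi}\in L^1$ near $z_0$ when $\alpha\,\nu(\phi,z_0)<2$, and this is sharp, as $\phi=\nu_0\log|z_1|$ shows ($e^{-\alpha\phi}=|z_1|^{-\alpha\nu_0}$ is integrable only for $\alpha\nu_0<2$); the factor $2n$ enters only in the converse non-integrability statement. This is harmless here, since any dimensional $\alpha$ suffices. Second, a bound ``depending only on $\alpha$, $\nu_0$, $n$'' cannot exist: replacing $\phi$ by $\phi-M$ preserves negativity and all Lelong numbers but multiplies $\int e^{-\alpha\phi}$ by $e^{\alpha M}$, so the constant must also involve the $L^1$ (or sup) normalization --- you have it, but the theorem you invoke, as stated, is false. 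Third, and most importantly, the uniformity of $C$ over all such $\phi$, which you yourself flag as the main obstacle, is precisely the content of the theorem and is not supplied by the qualitative Skoda/multiplier-ideal statement: H\"ormander $L^2$ theory gives local integrability of $e^{-\alpha\phi}$ near each point but no bound on $\int_{B_1}e^{-\alpha\phi}$ in terms of $n$ alone. To close this you would need either a genuinely quantitative version of Skoda's argument, or the uniform Skoda theorem for $L^1$-bounded families of psh functions (obtainable from the Demailly--Koll\'ar semicontinuity theorem together with the $L^1_{loc}$ compactness of $\{\phi\le 0,\ \|\phi\|_{L^1(B_{3/2})}\le C'\}$, arguing by contradiction), or the capacity route of \cite{Zeriahi}. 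As written, the last step either cites a statement that is not true or presupposes the uniform result being proved; the reduction to a uniform Lelong bound is a reasonable first move, but it is not where the difficulty lies.
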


\begin{rmk}
The Skoda inequality might be contrasted with subharmonic functions on the unit ball in $\R^{2n}$ for $n>1$, for which exponential integrability is far too much to expect.	
\end{rmk}

On a compact K\"ahler manifold $(Y, \omega)$, we say an upper semicontinuous $L^1$-function $\phi\in PSH(Y,\omega)$  if its sum with the local potential of $\omega$ is psh, so that
$\omega_\phi=\omega+ dd^c\phi\geq 0$. This is the generalised notion of K\"ahler potentials. The standard global analogue of the Skoda inequality is:

\begin{thm}\label{Skodastandardversion}\cite{Tian}
	On a fixed $(X, \omega)$, there are positive constants $\alpha$, $C$ depending only on $X, \omega$, such that
	\[
	\int_X e^{-\alpha \phi} \omega^n \leq C, \quad \forall \phi \in PSH(X,\omega) \text{ with } \sup \phi=0.
	\]
\end{thm}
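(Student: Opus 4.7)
The plan is to derive Theorem~\ref{Skodastandardversion} from the local Skoda inequality (Theorem~\ref{Skodabasicversion}) by a finite covering argument. The crucial preliminary step is a uniform $L^1$-bound
\[
\int_X |\phi|\,\omega^n \leq C \quad \text{for every } \phi \in PSH(X,\omega) \text{ with } \sup_X \phi = 0,
\]
which amounts to a quantitative compactness statement for the class of normalised $\omega$-psh functions.

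Granting this bound, I would cover $X$ by finitely many coordinate balls $B_i$ on each of which $\omega$ admits a smooth local potential $\psi_i$ with $|\psi_i|\leq M$, arrange concentric half-balls $B_i'$ that still cover $X$, and consider $u_i := \phi + \psi_i$, which is honestly psh on $B_i$. The $L^1$-bound together with equivalence of $\omega^n$ and Lebesgue measure gives $\int_{B_i} |u_i|\,\omega_E^n \leq C'$; after rescaling to fit the hypotheses, Theorem~\ref{Skodabasicversion} yields $\int_{B_i'} e^{-\alpha u_i}\,\omega_E^n \leq C''$, and the boundedness of $\psi_i$ lets me absorb $e^{-\alpha\psi_i}$ to conclude $\int_{B_i'} e^{-\alpha\phi}\,\omega^n \leq C'''$. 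Summing over the finite cover then proves the theorem.

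The main obstacle is the $L^1$-bound itself. The idea is to propagate the normalisation $\sup \phi = 0$ around $X$ by overlapping sub-mean value inequalities. Since $\phi$ is upper semicontinuous and $X$ is compact, the sup is essentially attained at some $x_\star \in X$, and on the ball $B_{i_1}\ni x_\star$ the sub-mean value inequality for the psh function $u_{i_1}$ forces $\int_{B_{i_1}} \phi\,\omega^n \geq -C_1$. A pigeonhole argument then produces a point of $B_{i_1}\cap B_{i_2}$ at which $\phi$ is not too negative, so one can iterate the sub-mean value step on $B_{i_2}$, and so on along a chain of balls; connectedness of $X$ and the finiteness of the cover bound the chain length, yielding $\int_{B_i}\phi\,\omega^n \geq -C$ uniformly in $i$, with the upper bound trivial from $\phi \leq 0$. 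The delicate point is to verify that no constant appearing in this propagation depends on $\phi$ itself, which is exactly what makes the exponent $\alpha$ and constant $C$ in the conclusion uniform across the whole class $PSH(X,\omega)$.
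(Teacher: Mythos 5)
Your argument is correct and is essentially the classical proof of Tian's theorem: the paper does not reprove this statement (it simply cites \cite{Tian}), but the covering-plus-local-Skoda scheme you describe — uniform $L^1$ control of normalised potentials, bounded local potentials $\psi_i$, Theorem \ref{Skodabasicversion} on each chart, absorb $e^{\alpha\psi_i}$, sum over a finite cover — is exactly the scheme the paper sketches for its uniform version, Theorem \ref{UniformSkodathm}. The only points needing routine care are the radii in the chaining step (the sub-mean-value inequality gives the integral bound over a ball centred at the near-maximum point, so the cover must be chosen so that such a ball stays inside the chart and contains the next half-ball, or one may instead get the $L^1$ bound from the Green's function estimate $\sup_X\phi\leq \frac{1}{V}\int_X\phi\,\omega^n+C$), and the observation that your propagation constant grows with the chain length — harmless for a fixed $(X,\omega)$, though it is precisely why the degenerating-family version cannot be obtained this way and requires the trickier test-function construction alluded to in the paper.
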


In our applications, we need to work with a polarized algebraic degeneration of Calabi-Yau manifolds  $\pi: X\to S\setminus \{0\}$ near the large complex structure limit, as in section \ref{Largecomplexstructurelimit}. Let $\omega_{FS}$ be a fixed Fubini-Study metric on $(X,c_1(L))$ induced by a projective embedding via the sections of a high power of $L$, and use $\omega_{FS,t}= \frac{1}{|\log |t||} \omega_{FS}|_{X_t}$ to define a family of background metrics on $X_t$ in the class  $\frac{1}{|\log |t||} c_1(L)$. The normalization factor $\frac{1}{|\log |t||} $ is to ensure two different choices of Fubini-Study reference metrics would differ by a potential with $C^0$ norm of order $O(1)$ independent of small $t$. Recall $d\mu_t$ is the normalized Calabi-Yau measure.

 We adapted the Skoda inequality to a uniform version \cite{LiSkoda}:

\begin{thm}(Uniform Skoda estimate)\label{UniformSkodathm}
There are uniform positive constants $\alpha, A$ independent of $t$ for $0<|t|\ll 1$, such that for the normalised Calabi-Yau measures $d\mu_t$,
\[
\int_{X_t} e^{-\alpha u}  d\mu_t \leq A, \quad \forall u\in PSH(X_t, \omega_{FS,t}) \text{ with } \sup_{X_t} u=0.
\]

\end{thm}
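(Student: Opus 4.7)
The plan is to reduce the global estimate, via the snc description of the Calabi--Yau measure from section \ref{volumeasymptoteessentialskeleton}, to a finite sum of local Skoda estimates in logarithmic coordinates. Fix a semistable snc model $\mathcal{X}$ and cover $X_t$ by finitely many neighborhoods of the intersection strata $E_J^0$. The dominant contribution to $\int_{X_t}e^{-\alpha u}\,d\mu_t$ comes from neighborhoods of top-dimensional strata (vertices of $Sk(\mathcal{X})$), with contributions from deeper strata handled analogously using their smaller relative CY mass $O(|\log|t||^{l-n})$ with $l<n$. Near a top-dimensional stratum, use semistable coordinates $(z_0,\ldots,z_n)$ with $t=z_0z_1\cdots z_n$, and pass to logarithmic coordinates $w_i=\log z_i$. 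Then $X_t$ becomes locally the slab $\{\sum w_i=\log t\}$ with $\mathrm{Re}\,w_i\in(\log|t|,0)$ and $\mathrm{Im}\,w_i$ periodic modulo $2\pi$; its real volume is of order $|\log|t||^n$, and the normalized Calabi--Yau measure becomes
\[
d\mu_t \;=\; \frac{c(w)}{|\log|t||^n}\, d\mathrm{Leb}_w,
\]
where $c(w)$ is a uniformly bounded smooth density coming from the Poincar\'e residue $u_J$.

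Set $v:=|\log|t||\,u$, so that $v\in PSH(X_t,\omega_{FS}|_{X_t})$ with $\sup_{X_t}v=0$ relative to the fixed ambient Fubini--Study form. Since $v\leq 0$ and $|\log|t||\geq 1$ for $|t|\leq 1/e$, one has $e^{-\alpha u}=e^{-\alpha v/|\log|t||}\leq e^{-\alpha v}$, so the theorem reduces to showing
\[
\int_{\text{slab}} e^{-\alpha v}\, d\mathrm{Leb}_w \;\leq\; C\,|\log|t||^n
\]
for some uniform $\alpha,C>0$. Cover the slab by $\asymp|\log|t||^n$ balls of radius $2$ in $w$-coordinates with bounded multiplicity of overlap. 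On each such ball $B$, the pullback of $\omega_{FS}|_{X_t}$ has a smooth local potential with bounds independent of $t$ (as $\omega_{FS}$ extends to a fixed smooth metric on the total space $\mathcal{X}$), so up to a uniformly bounded shift $v$ is plurisubharmonic on $B$. Theorem \ref{Skodabasicversion} then yields $\int_{B'}e^{-\alpha v}\,d\mathrm{Leb}_w\leq C$ on the concentric unit ball $B'$, \emph{provided} a uniform a priori bound $\int_B|v|\,d\mathrm{Leb}_w\leq C'$ holds with constants independent of $B$ and of $t$. Summing these local estimates over the $\asymp|\log|t||^n$ balls, and over the finitely many coordinate patches covering $X_t$, delivers the required growth.

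The heart of the matter is this uniform local $L^1$-bound on $v$. It amounts to a Hartogs-type compactness statement for the family $\{v\in PSH(X_t,\omega_{FS}|_{X_t}):\sup v=0\}$, uniform in $t$ and uniform across balls of the slab. The natural route is to extend $v$ from $X_t$ to an $\omega_{FS}$-psh function $\tilde v$ on the ambient projective space $\mathbb{CP}^N$ with its sup preserved up to a uniform additive constant---for instance via an envelope construction, or by an Ohsawa--Takegoshi extension with constants independent of $t$---and then invoke the standard compactness of $PSH(\mathbb{CP}^N,\omega_{FS})_{\sup\leq 0}$ in $L^1_{\mathrm{loc}}$ to control $\tilde v$, transferring back to $X_t$ by a uniform trace or mean-value inequality. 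The subtle point is that this local control must not deteriorate as a ball approaches the boundary of the slab, i.e.\ a deeper stratum of the central fiber where further components of $\mathcal{X}_0$ accumulate and where $v$ could in principle develop large negative spikes; securing uniformity there is the main technical obstacle. Once the uniform local $L^1$-bound is in hand, the assembly into the stated global exponential integrability is routine.
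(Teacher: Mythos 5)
Your overall architecture is the same as the paper's (cover $X_t$ by charts that look like standard balls in logarithmic coordinates, apply the local Skoda inequality, Theorem \ref{Skodabasicversion}, on each, and sum against the $|\log|t||^{-n}$ density of $d\mu_t$). However, your rescaling step is not merely a shortcut but an error: setting $v=|\log|t||\,u$ and using $e^{-\alpha u}\le e^{-\alpha v}$ replaces the theorem by a false statement. A sup-normalized $u\in PSH(X_t,\omega_{FS,t})$ can perfectly well satisfy $u\le -c$ on a set of $\mu_t$-measure bounded below (for instance $u=\frac{1}{|\log|t||}\log\bigl(|Z_0|/(\sum_i|Z_i|^2)^{1/2}\bigr)$ restricted to a hypersurface family such as the Fermat family: the region of the skeleton where $\log|Z_0|/\log|t|\ge c$ carries definite CY mass). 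Then $v\le -c|\log|t||$ there, so $\int_{X_t}e^{-\alpha v}\,d\mu_t\gtrsim e^{\alpha c|\log|t||}\to\infty$, and correspondingly the ``uniform per-ball bound $\int_B|v|\,d\mathrm{Leb}_w\le C'$'' that your plan hinges on is false, since on such charts $\int_B|v|\sim|\log|t||$. The factor $1/|\log|t||$ is exactly what makes the statement true and must be carried through: the correct local statement is a bound, uniform in $t$ and in the chart, on the local potentials of $u$ itself (equivalently, exponential integrability of $v$ at exponent $\alpha/|\log|t||$, not at a fixed exponent). Also, the deeper strata cannot be dismissed by their small relative mass alone, since $e^{-\alpha u}$ is unbounded there; they require the same local treatment.

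Even after correcting the scaling, the actual content of the theorem --- a uniform lower bound on the local sup (or $L^1$ norm) of $u$ on \emph{every} chart, propagated from the single normalization $\sup_{X_t}u=0$ --- is precisely what you leave open, and the route you sketch does not supply it. $L^1_{loc}$-compactness of $PSH(\mathbb{CP}^N,\omega_{FS})$ controls the extension only against the fixed ambient Fubini--Study volume, while the charts in question have ambient diameter of order a power of $|t|$; on such sets the ambient $L^1$ bound degenerates (even with Lelong-number arguments one only reaches lower bounds of order $-C|\log|t||$, which is the trivial scale after dividing by $|\log|t||$). Moreover the extension itself is problematic uniformly in $t$: Ohsawa--Takegoshi extends holomorphic sections, not quasi-psh functions, and extension results of Coman--Guedj--Zeriahi type (Prop.\ \ref{extensionKahlercurrent}) do not obviously preserve the sup up to $t$-independent constants. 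The proof of Theorem \ref{UniformSkodathm} in \cite{LiSkoda} addresses exactly this point by an elementary but delicate test-function comparison which transfers the global normalization into a uniform bound for the local potentials on each chart; this is the missing ingredient, and without it (or a genuine substitute) the final assembly you describe as routine cannot begin.
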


The proof involves covering $X_t$ by plenty of small regions which look like standard balls in $\C^n$. An elementary but somewhat tricky construction of test function allows one to estimate $L^1$ norms of the local potentials. One then applies the local version of Skoda inequality to each small region, and sum over all regions.

\subsection{Estimate on pluripotentials}\label{Toolsfrompsh}

A basic problem in pluripotential theory is to estimate K\"ahler potentials.  Kolodziej  pioneered a method to achieve the following effects. The basic versions of his theorems work on a fixed ambient K\"ahler manifold $(Y,\omega)$, and deal with K\"ahler potentials $\phi\in PSH(Y,\omega)$ normalized to $\sup_Y \phi=0$.  

\begin{itemize}
\item  (\textbf{`Potential estimate'}) If the volume density of $\omega_\phi^n$ has some integrability control such as an $L^p$-bound
\begin{equation}\label{Lpdensity}
\int_Y |\frac{\omega_\phi^n}{\omega^n}|^p \leq C, \quad p>1,
\end{equation}
then  $\phi$ has an $L^\infty$ bound depending only on $(X,\omega), n, p,C$ \cite{Kolodziej}\cite{DemaillyPali}\cite{EGZ}. In fact this can be improved to a $C^\alpha$ H\"older bound on $\phi$ \cite{KolodziejHolder}. (Notice $p=1$ would not suffice, as the $L^1$-bound $\int_Y \omega_\phi^n \leq \int_Y \omega^n$ is automatic).

\item (\textbf{`$L^1$-stability estimates'}) Suppose $\phi,\psi\in PSH(Y,\omega)$ are both subject to the $L^p$ volume density integrability control (\ref{Lpdensity}), and the normalization $\sup_Y\phi=\sup_Y\psi=0$. If 
\begin{enumerate}
	\item Either $\phi, \psi$ are close together in the $L^1$-sense $\int_Y |\psi-\phi|\omega^n\ll 1$, (`$L^1$-potential stability')
	\\
	\item Or the volume densities of $\omega_\phi^n$ and $\omega_\psi^n$ are close together in the $L^1$-sense, namely the total variation $\int_Y |\omega_\phi^n-\omega_\psi^n|\ll 1$, (`$L^1$-volume stability')
\end{enumerate}
  Then $|\phi-\psi|$ is small in the $L^\infty$ sense with quantitative estimates \cite{KolodziejL1}.

\end{itemize}

\begin{rmk}\label{globalpositivityKolodziej}
To appreciate the strength of Kolodziej's results, these should be contrasted with the Poisson equation $\Lap u=f$ on a compact Riemannian manifold in dimensions at least three. If $f\in L^p$ for some $p>1$, then we can only deduce $u\in W^{2,p}$, and $W^{2,p}$ fails to embed into $L^\infty$ for $p$ close to one, so we cannot expect any a priori $L^\infty$ bound on $u$. Ultimately, the \emph{global positivity condition} $\phi\in PSH(X,\omega)$ makes the difference. 

\end{rmk}

\begin{rmk}
Kolodziej's proofs depend on his pluripotential theoretic `capacity decay' argument. 
The author was informed by Freid Tong that a good part of Kolodziej's results have found new proofs \cite{Tong1}\cite{Tong2}, inspired by the recent breakthrough of Chen and Cheng \cite{ChenCheng} on the constant scalar curvature K\"ahler equation.
\end{rmk}

In our applications, we need to work with a family of K\"ahler manifolds, and the estimates need to be uniform  under very severe complex structure degenerations, and allowing the total volume to collapse to zero. Some subtleties are:

\begin{itemize}
\item    The Calabi-Yau volume measure is very different from the volume form of a Fubini-Study metric (\cf section \ref{volumeasymptoteessentialskeleton}), so the idea of volume density with respect to a Fubini-Study style ambient metric is no longer appropriate. The replacement of (\ref{Lpdensity}) turns out to be a Skoda type estimate (\ref{Skodaassumption}).

\item   Unlike $L^\infty$ bounds, the H\"older norm depends strongly on the choice of the ambient metric, which specifies a choice of distance function. We think it is highly non-obvious how to make a semi-explicit choice uniformly in the family, and therefore we do not attempt to generalize H\"older estimates.

\item A technical problem in our applications involving the comparison of two potentials, only one potential has volume density control. Thus unlike the $L^1$-stability estimates above, our version treats the two potentials asymmetrically.

\end{itemize}

Our analogue of the potential estimate is

\begin{thm}\label{pluripotentialthm1}  (\cf \cite[section 2.2]{LiFermat})	Let $(Y, \omega)$ be a compact K\"ahler manifold, and $\phi\in PSH(Y,\omega)\cap C^0$, such that $\omega_\phi^n$ is an absolutely continuous measure. Assume there are positive constants $\alpha, A$, such that the Skoda type estimate holds with respect to $\omega_\phi^n$:
	\begin{equation}\label{Skodaassumption}
	\int_Y e^{-\alpha u}  \frac{\omega_\phi^n }{ \text{Vol}(Y) } \leq A, \quad \forall u\in PSH(Y,\omega) \text{ with } \sup_Y u=0.
	\end{equation}
Then we have	\begin{itemize}
\item   If $\sup_Y \phi=0$, then $\norm{\phi}_{C^0} \leq C(n,\alpha,A)$.

\item   For fixed $n,\alpha,A$, there is number $B(n,\alpha,A)$, such that if $\frac{ \int_{ \phi\leq -t_0} \omega_\phi^n}{  \text{Vol}(Y)  }< (2B)^{-2n}$ for some $t_0$, then $\min \phi\geq -t_0- 4B (\frac{ \int_{ \phi\leq -t_0} \omega_\phi^n}{  \text{Vol}(Y)} )^{1/2n}$.  
			
	\end{itemize}
	
\end{thm}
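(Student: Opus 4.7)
The plan is to prove the quantitative second statement first, and then deduce the $C^0$ bound as a direct consequence.

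For the second statement, set
$$g(t) := \frac{1}{\text{Vol}(Y)}\int_{\{\phi \le -t\}} \omega_\phi^n,$$
which is right-continuous and non-increasing. The goal is a De Giorgi-type one-step inequality of the schematic form
$$g(t+\delta) \;\le\; \frac{K(n,\alpha,A)}{\delta^{2n}}\,g(t)^{2}
\qquad (t \ge t_0,\ 0<\delta\le 1).$$
A geometric iteration with a dyadic sequence $\delta_k$, whose sum is at most $4B\,g(t_0)^{1/(2n)}$ for an appropriate $B=B(n,\alpha,A)$, then forces $g\equiv 0$ on $[t_0+4B\,g(t_0)^{1/(2n)},\infty)$ provided the initial smallness $g(t_0)<(2B)^{-2n}$ holds. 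Together with the absolute continuity of $\omega_\phi^n$ and the upper semicontinuity of $\phi$, this translates into the pointwise bound $\min\phi\ge -t_0 - 4B\,g(t_0)^{1/(2n)}$.

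The one-step inequality is where pluripotential theory enters. I would construct, for each $t\ge t_0$ and $\delta\in(0,1]$, an auxiliary $\omega$-psh competitor $\psi_{t,\delta}$ --- e.g.\ a Kiselman-style envelope
$$\psi_{t,\delta} := \sup{}^{*}\bigl\{v\in PSH(Y,\omega):\ v\le -t\text{ on }\{\phi\le -t-\delta\},\ \sup_Y v\le 0\bigr\},$$
normalise it to have $\sup\psi_{t,\delta}=0$, and apply the plurisubharmonic comparison principle on the open set $\{\phi<\psi_{t,\delta}\}$. This yields an inequality of the form $\int_{\{\phi<\psi_{t,\delta}\}}\omega_{\psi_{t,\delta}}^n\le\int_{\{\phi<\psi_{t,\delta}\}}\omega_\phi^n\le g(t)\,\text{Vol}(Y)$, while on the region where the envelope is controlled by the linear obstacle one picks up a factor $\delta^{2n}\,g(t+\delta)$ from $\omega_{\psi_{t,\delta}}^n$. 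The Skoda hypothesis \eqref{Skodaassumption} is then used to bound $\int e^{-\alpha\psi_{t,\delta}}\omega_\phi^n/\text{Vol}(Y)\le A$, which is what allows one to close the estimate with a measure-theoretic reference $\omega_\phi^n$ rather than a fixed Lebesgue-type measure.

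For the first statement, applying the Skoda hypothesis \eqref{Skodaassumption} directly to $u=\phi$ (permitted since $\sup_Y\phi=0$) gives $\int_Y e^{-\alpha\phi}\omega_\phi^n/\text{Vol}(Y)\le A$, and Chebyshev's inequality yields $g(t)\le A e^{-\alpha t}$. Choosing $t_0=\tfrac{1}{\alpha}\log\bigl(A(2B)^{2n}\bigr)$ guarantees $g(t_0)<(2B)^{-2n}$, so the second statement produces $\min\phi\ge -t_0 - 4B(Ae^{-\alpha t_0})^{1/(2n)}$, a uniform lower bound depending only on $n,\alpha,A$.

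The hard part will be the comparison-principle step. One must engineer the auxiliary function $\psi_{t,\delta}$ so that $\omega_{\psi_{t,\delta}}^n$ genuinely contributes a factor $\delta^{2n}$ from the slope across $\partial\{\phi\le -t\}$, while simultaneously keeping $\{\phi<\psi_{t,\delta}\}$ inside a sublevel set of $\phi$ whose $\omega_\phi^n$-mass is controlled by $g(t)$. A secondary subtlety is that \eqref{Skodaassumption} is formulated against the self-referential measure $\omega_\phi^n$ (not a background Fubini--Study measure), so the capacity-type estimates that appear in Ko\l odziej's classical argument must be adapted to this non-standard reference. Once the one-step inequality is available, the geometric iteration and the passage from $\omega_\phi^n$-vanishing of sublevel sets to a pointwise lower bound on $\phi$ are routine given the absolute continuity hypothesis.
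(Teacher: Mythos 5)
Your overall skeleton is the right one and matches the paper's (the proof of this theorem is the Kolodziej-style ``capacity decay'' argument of \cite[section 2.2]{LiFermat}): the first bullet does reduce to the second by taking $u=\phi$ in (\ref{Skodaassumption}) and using Chebyshev, and the quantitative bullet does come from a De Giorgi iteration on a one-step inequality whose exponents ($\delta^{-2n}$, $g(t)^2$, hence the $1/2n$ in the conclusion) you have identified correctly. The gap is that your derivation of that one-step inequality is not an argument. In the actual proof it is the composite of two separate ingredients: (i) the comparison principle for the Bedford--Taylor Monge--Amp\`ere capacity, $\delta^n\,\text{Cap}_\omega(\{\phi<-t-\delta\})\leq\int_{\{\phi<-t\}}\omega_\phi^n$ for $0<\delta\leq 1$, which produces only a $\delta^{n}$ and only against the capacity of the deeper sublevel set, not against $g(t+\delta)$; and (ii) a domination of the measure by capacity, $\omega_\phi^n(E)/\text{Vol}(Y)\leq C(n,\alpha,A)\bigl(\text{Cap}_\omega(E)/\text{Vol}(Y)\bigr)^2$, which is where (\ref{Skodaassumption}) actually enters: one applies it to the normalized relative extremal function $h_E^*$ of $E$ and needs in addition an Alexander--Taylor type lower bound $\sup_Y(-h_E^*)\geq c(n)\bigl(\text{Vol}(Y)/\text{Cap}_\omega(E)\bigr)^{1/n}-C(n)$ to convert the exponential integrability into a power of capacity. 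Your single envelope $\psi_{t,\delta}$ with the constant obstacle $-t$ cannot deliver both effects in one comparison-principle application: its Monge--Amp\`ere mass concentrates on the contact set with a constant function, and there is no mechanism in your sketch by which $\omega_{\psi_{t,\delta}}^n$ ``picks up a factor $\delta^{2n}g(t+\delta)$''; likewise the bound $\int_Y e^{-\alpha\psi_{t,\delta}}\omega_\phi^n\leq A\,\text{Vol}(Y)$ is quantitatively empty unless you also control how negative $\psi_{t,\delta}$ gets, which is exactly the missing extremal-function/capacity estimate. As written, the heart of the theorem is unproven; you need to introduce $\text{Cap}_\omega$ (or an equivalent quantitative study of the relative extremal functions of the sublevel sets) and prove (i) and (ii) separately, after which your iteration goes through verbatim.

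A secondary, fixable point: the endgame ``$g\equiv 0$ beyond $T$ implies $\min\phi\geq -T$'' does not follow from absolute continuity of $\omega_\phi^n$. That measure can vanish identically on a nonempty open set, so $\omega_\phi^n(\{\phi<-T\})=0$ does not by itself force $\{\phi<-T\}=\emptyset$. Either run the iteration at the level of capacity, where nonempty open sets have positive capacity, or apply the comparison principle once more with the constant competitor $\psi\equiv-T$: this gives $\int_{\{\phi<-T\}}\omega^n\leq\int_{\{\phi<-T\}}\omega_\phi^n=0$, and since $\phi$ is continuous and $\omega^n$ charges every nonempty open set, the sublevel set is empty.
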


The strength of this result is that it still applies when the complex/K\"ahler structures are highly degenerate, as it distills the dependence on $(Y,\omega)$ to only 3 constants $n,\alpha,A$. The relations to the more standard version above can be explained as follows:

\begin{itemize}
\item For a fixed ambient K\"ahler metric, by the H\"older inequality and the standard Skoda inequality Theorem \ref{Skodastandardversion}, under the density integrability assumption (\ref{Lpdensity}), then for $\frac{1}{p}+\frac{1}{p'}=1$, and some small enough $\beta>0$,
\[
\int_Y e^{-\beta u}  \frac{\omega_\phi^n }{ \text{Vol}(Y) } \leq \frac{1}{ \text{Vol}(Y) } \left(  \int_Y |\frac{\omega_\phi^n}{\omega^n}|^p  \omega^n   \right)^{1/p} \left( \int_Y e^{-p'\beta u}      \omega^n    \right)^{1/p'} \leq \text{const}.
\]
This means the Skoda type estimate (\ref{Skodaassumption}) is a weaker assumption than (\ref{Lpdensity}), even in the standard setting.

\item The first part of the conclusion recovers Kolodziej's potential estimate.

\item The second part of the conclusion is about comparing the two potentials $\phi$ versus $-t_0$. The condition $\frac{ \int_{ \phi\leq -t_0} \omega_\phi^n}{  \text{Vol}(Y)  }< (2B)^{-2n}$ means $\int_{ \phi\leq -t_0}\omega_\phi^n$ is small, which by the H\"older inequality and the density integrability (\ref{Lpdensity}) follows from the smallness of 
 $\int_{ \phi\leq -t_0}\omega^n$. This should be viewed as one half of the $L^1$-potential stability condition
$
 \int_Y |\phi+t_0|\omega^n\ll 1. 
$
 The conclusion for the lower bound on $\min \phi$, should be viewed as one half of 
 a smallness bound on the $C^0$-norm of $\phi+t_0$, namely the two potentials $\phi$ and $-t_0$ are close together in $C^0$-norm. 

\item 
To generalize to the cases with $\psi$ not necessarily constant, it suffices to replace $\omega$ by $\omega_\psi$, and $\phi$ by $\phi-\psi$ in Theorem \ref{pluripotentialthm1}.

\end{itemize}

%\begin{cor}\label{StabilityestimateKolodziej}
%	(Stability estimate) Let $(Y,\omega)$ be a compact K\"ahler manifold, and $\phi, \psi\in PSH(Y,\omega)\cap C^0$, such that $\omega_\psi^n$ is absolutely continuous. Assume $\norm{\phi}_{C^0} \leq A'$ and the Skoda type estimate (\ref{Skodaassumption}). Then there is a number $B(n,A, A', \alpha)$, such that if $\frac{ \int_{ \psi-\phi\leq -t_0} \omega_\psi^n}{  \text{Vol}(Y)  }< (2B)^{-2n}$ for some $t_0$, then \[
%	\min (\psi-\phi)\geq -t_0- 4B \left(\frac{ \int_{ \psi-\phi\leq -t_0} \omega_\psi^n}{  \text{Vol}(Y)} \right)^{1/2n}.
%	\].   
%\end{cor}

Combining Thm. \ref{UniformSkodathm} with Thm. \ref{pluripotentialthm1}, we immediately obtain

\begin{thm}\label{UniformLinfty}\cite[Thm. 1.4]{LiSkoda}
	(Uniform $L^\infty$-estimate) Given a large complex structure limit of Calabi-Yau manifolds, the potential of the Calabi-Yau metrics $\omega_{CY,t}$ in the class $\frac{1}{|\log |t||}c_1(L)$ relative to a fixed Fubini-Study reference metrics $\omega_{FS,t}$, have \textbf{uniform $L^\infty$-estimate} independent of $0<|t|\ll 1$, under suitable additive normalization.
\end{thm}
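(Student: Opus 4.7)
The plan is to apply Theorem \ref{pluripotentialthm1} fibrewise on each $X_t$, using the uniform Skoda estimate Theorem \ref{UniformSkodathm} to supply the hypothesis (\ref{Skodaassumption}) with constants independent of $t$. First I would fix the additive normalization by choosing $\phi_t \in PSH(X_t, \omega_{FS,t}) \cap C^\infty$ with $\omega_{CY,t} = \omega_{FS,t} + dd^c \phi_t$ and $\sup_{X_t} \phi_t = 0$; this is well defined and unique because $\omega_{CY,t}$ and $\omega_{FS,t}$ both lie in the class $\frac{1}{|\log |t||} c_1(L)$.

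Next I would identify the probability measure $\omega_{\phi_t}^n / \int_{X_t} \omega_{\phi_t}^n$ with $d\mu_t$: this is a direct consequence of the Calabi-Yau equation defining $\omega_{CY,t}$, combined with the equality $\int_{X_t} \omega_{\phi_t}^n = \text{Vol}(X_t, \omega_{FS,t})$ forced by cohomology. Plugging this identification into Theorem \ref{UniformSkodathm} yields, for all $0 < |t| \ll 1$ and all $u \in PSH(X_t, \omega_{FS,t})$ with $\sup u = 0$,
\[
\int_{X_t} e^{-\alpha u} \frac{\omega_{\phi_t}^n}{\text{Vol}(X_t, \omega_{FS,t})} \leq A,
\]
with $\alpha, A$ independent of $t$. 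This is exactly hypothesis (\ref{Skodaassumption}) of Theorem \ref{pluripotentialthm1} applied to $(Y, \omega) = (X_t, \omega_{FS,t})$ and $\phi = \phi_t$, with constants that do not deteriorate as $t \to 0$.

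Invoking the first conclusion of Theorem \ref{pluripotentialthm1} then gives $\|\phi_t\|_{C^0(X_t)} \leq C(n, \alpha, A)$; since $n$ is the fixed complex dimension and $\alpha, A$ come from the uniform Skoda step, the right-hand side is independent of $t$, which is the desired statement.

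The genuinely hard step, already packaged into the cited results, is the uniform Skoda estimate itself: extracting $\alpha, A$ independent of the degeneration parameter requires a careful covering of $X_t$ by regions comparable to Euclidean balls in $\C^n$, a delicate construction of test functions to control the $L^1$-norms of local potentials of the Fubini-Study reference, and summation of the classical local Skoda bound (Theorem \ref{Skodabasicversion}) across these regions. Once those uniform constants are in hand, and given that Theorem \ref{pluripotentialthm1} was designed to depend on the ambient geometry \emph{only} through $n, \alpha, A$, the final combination is essentially formal.
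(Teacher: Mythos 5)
Your proposal is correct and follows exactly the paper's route: the paper obtains Theorem \ref{UniformLinfty} precisely by combining the uniform Skoda estimate (Theorem \ref{UniformSkodathm}) with the first conclusion of Theorem \ref{pluripotentialthm1}, using the Calabi-Yau equation to identify $\omega_{\phi_t}^n/\mathrm{Vol}(X_t,\omega_{FS,t})$ with $d\mu_t$ so that the hypothesis (\ref{Skodaassumption}) holds with constants independent of $t$. The details you supply (the $\sup\phi_t=0$ normalization, the cohomological equality of total volumes, and the observation that the estimate depends only on $n,\alpha,A$) are exactly the intended, essentially formal, combination.
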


Our adaption of the $L^1$-volume stability estimate is

\begin{thm}\label{UniformL1stabilitythm}(\cf \cite[Theorem 2.6]{LiNA})
	(Uniform $L^1$-stability) Let $(Y,\omega)$ be a compact K\"ahler manifold, and $\phi\in PSH(Y,\omega)\cap C^0$, satisfying the complex MA equations
	\[
	\frac{\omega^n }{ \text{Vol}(Y) }=d\mu,  \quad \frac{\omega_\phi^n }{ \text{Vol}(Y) }=d\nu
	\]
	for probability measures $d\mu$ and $d\nu$.
	Assume
	\begin{itemize}
		\item There is a Skoda type estimate
		\[
		\int_Y e^{-\alpha u}  d\mu \leq A, \quad \forall u\in PSH(Y,\omega) \text{ with } \sup_Y u=0.
		\]

		\item The complement of $E_0=\{ \phi>0  \}$ has a mass lower bound \[ \int_{  E_0^c} d\mu \geq \lambda>0. \]
		
		\item ($L^1$-stability assumption) The total variation $\int_Y |d\mu-d\nu| \leq s^{2n+3}<1$. 
		
		\item $\phi$ is smooth away from a (possibly empty) closed subset $S$ with $d\mu$-measure zero. Globally $\norm{\phi}_{C^0}\leq A'$.
	\end{itemize}
	Then for $0<s<s_0(\lambda, n, \alpha, A, A')\ll 1$, there is a uniform estimate 
	\[
	\sup_Y \phi \leq C(\lambda, n, \alpha, A, A') s.
	\]
\end{thm}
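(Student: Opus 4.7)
The overall strategy is to recast this $L^1$-stability claim as a \emph{dualized} application of Theorem~\ref{pluripotentialthm1} (part~2): I will estimate $\sup_Y\phi$ by applying the quantitative potential estimate not to $\phi$ itself, but to its ``dual'' $\omega_\phi$-psh function $\psi := \inf_Y\phi - \phi$, whose Monge--Amp\`ere measure relative to $\omega_\phi$ is exactly $\omega^n=\text{Vol}(Y)\,d\mu$. This dualization is essential because the Skoda hypothesis in Theorem~\ref{pluripotentialthm1} is phrased with respect to the MA measure of the potential being estimated; for $\psi$ that measure is precisely $d\mu$, which is the only measure for which we have a Skoda inequality. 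The residual work is then to bound the $d\mu$-mass of the super-level sets $\{\phi\ge\tau\}$ by a power of $s$, using hypotheses~(2) and~(3) together with the comparison principle.

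Concretely, set $M:=\sup_Y\phi$, assume WLOG $M>0$, and let $\psi:=\inf_Y\phi-\phi$. One checks $\omega_\phi+dd^c\psi=\omega\ge 0$, so $\psi\in PSH(Y,\omega_\phi)\cap C^0$ with $\sup\psi=0$ and $(\omega_\phi)_\psi^n=\omega^n$. To apply Theorem~\ref{pluripotentialthm1} I first transfer the Skoda estimate: for $u\in PSH(Y,\omega_\phi)$ with $\sup u=0$, the function $\phi+u$ is $\omega$-psh, and $\|\phi\|_{C^0}\le A'$ forces $\sup(\phi+u)\in[-A',A']$; normalising and invoking hypothesis~(1) yields
\[
\int_Y e^{-\alpha u}\,d\mu \;\le\; A\,e^{2\alpha A'}, \qquad \forall\,u\in PSH(Y,\omega_\phi),\ \sup u=0.
\]
Part~2 of Theorem~\ref{pluripotentialthm1}, applied to $\psi$ with ambient form $\omega_\phi$ and MA measure $\omega^n=\text{Vol}(Y)\,d\mu$, then produces a constant $B=B(n,\alpha,A,A')$ such that, whenever $\mu(\{\phi\ge\tau\})<(2B)^{-2n}$ and $\tau\ge \inf\phi$,
\[
\sup_Y\phi \;\le\; \tau + 4B\,\mu(\{\phi\ge\tau\})^{1/2n},
\]
after translating $\{\psi\le -t_0\}=\{\phi\ge \inf\phi+t_0\}$ and $\min\psi=\inf\phi-M$.

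It remains to choose $\tau$ of order $s$ so that $\mu(\{\phi\ge\tau\})$ is small enough to make the right-hand side $O(s)$. The comparison principle applied to $\phi$ and the constant $\tau$ gives $\nu(\{\phi>\tau\})\le\mu(\{\phi>\tau\})$, and hypothesis~(3) gives $\mu(\{\phi>\tau\})-\nu(\{\phi>\tau\})\le s^{2n+3}$; together these only say that $\mu$ and $\nu$ almost agree on $\{\phi>\tau\}$. To turn this into a decay of $\mu(\{\phi>\tau\})$ itself, I would use a Błocki-type refinement: writing
\[
\omega^n-\omega_\phi^n = -dd^c\phi\wedge\sum_{k=0}^{n-1}\omega^k\wedge\omega_\phi^{n-1-k}
\]
and integrating against a smooth cutoff $\chi((\phi-\tau)/\delta)$ gives, by integration by parts, a Dirichlet-energy term concentrated in the shell $\{\tau<\phi<\tau+\delta\}$. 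Combined with hypothesis~(2) (which anchors $\phi\le 0$ on a set of $\mu$-mass $\ge\lambda$, forcing $\phi$ to undergo a large oscillation if $M$ is not small) and the global bound $\|\phi\|_{C^0}\le A'$, a capacity-decay iteration over dyadic levels yields $\mu(\{\phi\ge\tau\})\le C\,s^{2n+2}$ for $\tau$ of order $s$. Choosing $\tau=C_0(\lambda,n,\alpha,A,A')\,s$ and substituting into the displayed inequality gives $\sup_Y\phi\le C'\,s$.

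The principal difficulty lies in the last step: extracting a quantitative decay of super-level set measures from the $L^1$-closeness of the Monge--Amp\`ere masses, using only the Skoda-type hypothesis on $d\mu$ rather than the customary $L^p$-density control present in the classical Kolodziej--Dinew $L^1$-stability theorems. The exponent $2n+3$ baked into hypothesis~(3) is tailored precisely so that, after the $1/2n$-loss in Theorem~\ref{pluripotentialthm1} part~2 and the capacity-decay iteration, the resulting bound on $\sup_Y\phi$ is linear in $s$, as claimed.
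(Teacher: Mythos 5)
Your first step—the dualization $\psi=\inf_Y\phi-\phi\in PSH(Y,\omega_\phi)$, the transfer of the Skoda estimate to $\omega_\phi$-psh functions at the cost of a factor $e^{2\alpha A'}$, and the application of the second part of Theorem \ref{pluripotentialthm1} to get $\sup_Y\phi\le\tau+4B\,\mu(\{\phi\ge\tau\})^{1/2n}$ whenever $\mu(\{\phi\ge\tau\})<(2B)^{-2n}$—is sound and is exactly in the spirit of the remark made after Theorem \ref{pluripotentialthm1} (``replace $\omega$ by $\omega_\psi$ and $\phi$ by $\phi-\psi$''); the only caveat is that $\omega_\phi$ is merely a positive current with continuous potential rather than a K\"ahler form, so you are invoking Theorem \ref{pluripotentialthm1} slightly outside its stated hypotheses and would need to check its proof tolerates this. (Note the survey does not prove the stability theorem itself; it defers to \cite[Thm.\ 2.6]{LiNA}, so the substance has to be supplied by you.)

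The genuine gap is your Part B, i.e.\ the claim $\mu(\{\phi\ge\tau\})\le Cs^{2n+2}$ for some $\tau=O(s)$, which is the actual heart of the theorem and is not established by the sketch. First, the shell Dirichlet-energy bounds you extract by pairing $d\mu-d\nu$ against cutoffs of $\phi$ contain no information beyond hypothesis (3) itself: by the layer-cake identity $\int_Y\max(\phi-\tau,0)(d\mu-d\nu)=\int_\tau^\infty\bigl[\mu(\phi>r)-\nu(\phi>r)\bigr]dr$, these energies are just integrals of the pointwise-in-level total-variation discrepancy, each term already $\le s^{2n+3}$, so they cannot by themselves force decay of $\mu(\{\phi\ge\tau\})$. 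Second, converting small Dirichlet energy plus the anchor (2) into oscillation or level-set measure control is precisely the kind of step that needs a uniform Poincar\'e/Sobolev or capacity--volume inequality, which is unavailable in the degenerate family setting—this is the very reason the survey abandons Moser iteration—and your ``capacity-decay iteration'' never exhibits the recursive inequality; the natural candidate $\delta^n\,\mathrm{Cap}_\omega(\{\phi>t+\delta\})\le\int_{\{\phi>t\}}\omega^n$ has the wrong structure, since its right-hand side is the quantity you are trying to prove small. Third, the intermediate claim is false as stated: a potential equal to roughly $s$ on half of $Y$ and $\le 0$ on the anchor set satisfies all the hypotheses (and the theorem's conclusion trivially), yet $\mu(\{\phi\ge s/2\})$ is of order $1$; so any correct argument must either run as a dichotomy on the size of $\sup_Y\phi$ or take $\tau$ a large multiple of $s$ and use the assumed largeness of $\sup_Y\phi$ in deriving the decay—your proposal does neither. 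Relatedly, hypothesis (2), without which the statement is false (add a large constant to $\phi$), enters your argument only through one vague sentence, and your Part A inequality is insensitive to adding constants to $\phi$, so the absolute anchoring must come from the missing Part B. As it stands, the proposal reduces the theorem to its hardest step and then asserts it.
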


Theorem \ref{UniformL1stabilitythm} should be viewed as a one-sided version of $L^1$-volume stability estimate. The goal is to compare the two potentials $\phi$ and $0$, without loss of generality. The Skoda type estimate is the weakened version of the volume density integrability assumption (\ref{Lpdensity}) as before. Assume for the moment that this holds for \emph{both measures} $d\mu$ and $d\nu$. After adjusting $\phi$ by an additive constant, we might as well assume $\mu(E_0)\approx \nu(E_0)\approx \frac{1}{2}$, noticing that the total variation between the two measures is small by assumption. Then we can reverse the role of $\omega_\phi$ and $\omega$, to deduce a \emph{two-sided} smallness bound on $\phi$, which is the content of the $L^1$-volume stability estimate.

%\subsection{Extension of K\"ahler potentials}

%Extension theorems allow us to think extrinsically about K\"ahler currents on subvarieties in some ambient projective manifold.

%\begin{thm}\label{extensionKahlercurrent}
%	(\cite[Thm. B]{Coman}) Let $(X,\omega)$ be a projective manifold with a K\"ahler form representing an integral class, and $Y$ be a smooth subvariety of $X$. Then any $\phi \in PSH(Y,\omega|_Y)$ extends to $\phi\in PSH(X,\omega)$.
%\end{thm}

\subsection{Savin's small perturbation theorem}

Savin \cite{Savin} proved that for a large class of second order elliptic equations satisfying certain structural conditions, any viscosity solution $C^0$-close to a given smooth solution has interior $C^{2,\gamma}$-bound. In particular this applies to complex MA equation. Combined with the Schauder estimate,

\begin{thm}\label{Savin}
	Fix $k\geq 2$ and  $0<\gamma<1$.
	On the unit ball, let $v$ be a given smooth solution to the complex Monge-Amp\`ere equation $(dd^c v)^n=1$. Then there are constants $0<\epsilon\ll 1$ and $C$ depending on $n, k,\gamma, \norm{v}_{C^{k,\gamma}}$, such that if 
	\[
	(dd^c (u+v))^n=1+f, \quad \norm{f}_{C^{k-2,\gamma}}<\epsilon,
	\]
	and $\norm{u}_{C^0}<\epsilon$, then $\norm{u}_{C^{k,\gamma}(B_{1/2})}\leq C\epsilon$.
\end{thm}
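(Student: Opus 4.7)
The plan is to apply Savin's small perturbation theorem \cite{Savin} as a black box to obtain an initial interior $C^{2,\gamma}$ estimate on $u$, and then bootstrap via linear Schauder theory up to $C^{k,\gamma}$.

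First I would verify that the complex Monge-Amp\`ere operator fits Savin's structural framework. Setting $w=u+v$, the equation reads $F(D^2 w)=1+f$ with $F(M)=\det(M^{1,1})$ depending only on the $(1,1)$-part of the real Hessian; equivalently $F=\log\det(w_{i\bar j})$ is concave on the cone of strictly plurisubharmonic Hessians. Since $v$ is smooth with $(dd^c v)^n=1$, the matrix $(v_{i\bar j})$ is uniformly positive definite on $B_1$, so the linearisation of $F$ at $D^2 v$ is a uniformly elliptic real second order operator with smooth coefficients. These are exactly the structural conditions Savin needs. Therefore for $\epsilon$ small depending on $n,\gamma,\norm{v}_{C^{2,\gamma}}$, the hypotheses $\norm{u}_{C^0}<\epsilon$ and $\norm{f}_{C^{0,\gamma}}\le\norm{f}_{C^{k-2,\gamma}}<\epsilon$ trigger Savin's theorem and yield
\begin{equation*}
\norm{u}_{C^{2,\gamma}(B_{3/4})} \le C\epsilon.
\end{equation*}

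Next I would bootstrap. Differentiating $\log\det((v+u)_{i\bar j}) = \log(1+f)$ in a direction $\partial_e$ gives a linear elliptic equation for $\partial_e u$,
\begin{equation*}
a^{i\bar j}(z)\,(\partial_e u)_{i\bar j} \;=\; \partial_e\log(1+f) \;-\; a^{i\bar j}(z)\,(\partial_e v)_{i\bar j},
\end{equation*}
where $a^{i\bar j}$ is the Hermitian inverse of $(v+u)_{i\bar j}$. The previous $C^{2,\gamma}$ bound places $a^{i\bar j}$ in $C^{0,\gamma}$ and, after shrinking $\epsilon$ if necessary, keeps it uniformly positive definite (since $v_{i\bar j}$ is). Using $f\in C^{k-2,\gamma}$ and $v$ smooth, the right hand side lies in $C^{0,\gamma}$, so interior Schauder estimates on a slightly smaller ball promote $\partial_e u$ to $C^{2,\gamma}$, i.e. $u\in C^{3,\gamma}$. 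Each further differentiation raises the H\"older regularity of $a^{i\bar j}$ and of the RHS by one order; iterating $k-2$ times and absorbing the ball shrinkage at each step into a finite geometric sequence from $B_{3/4}$ down to $B_{1/2}$ produces $\norm{u}_{C^{k,\gamma}(B_{1/2})}\le C\epsilon$, with $C$ depending on the stated data.

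The main obstacle is Savin's theorem itself, whose proof uses a delicate compactness and Harnack-type argument for fully nonlinear concave uniformly elliptic equations and genuinely exploits the concavity of $\log\det$. In our write-up it is invoked as a citation, so the actual work reduces to the routine Schauder bootstrap above. The only bookkeeping subtlety is coordinating the small ball shrinkages at each bootstrap step so that the final estimate lands on $B_{1/2}$ as stated, and checking that the $C^{k-2,\gamma}$ smallness of $f$ is more than enough to feed Savin's $C^{0,\gamma}$-smallness hypothesis.
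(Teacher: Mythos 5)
Your proposal matches the paper's argument: the theorem is obtained exactly by invoking Savin's small perturbation result as a black box for the interior $C^{2,\gamma}$ bound (noting that the complex Monge-Amp\`ere operator satisfies his structural/concavity conditions and that $(v_{i\bar j})$ is uniformly elliptic since $\det(v_{i\bar j})=1$ with $\norm{v}_{C^{k,\gamma}}$ controlled), followed by a standard Schauder bootstrap to reach $C^{k,\gamma}$ on $B_{1/2}$. Your write-up simply spells out the bootstrap and ball-shrinking details that the paper compresses into ``combined with the Schauder estimate.''
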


Savin's theorem has fully nonlinear nature, because the perturbative machinery only applies once the solution has \emph{a priori} $C^2$ bound. His proof has two main parts: first he shows a Harnack inequality by a nontrivial application of  Aleksandrov-Bakelman-Pucci estimates, and then uses a compactness argument to prove $C^{2,\gamma}$ estimate, similar to De Giorgi's almost flatness theorem for minimal surfaces \cite{DeGiorgi}.

\subsection{Regularity theory for real Monge-Amp\`ere}\label{RegularitytheoryforrealMA}

There is an extensive literature on the local regularity theory for the real Monge-Amp\`ere equation, largely due to the Caffarelli school.  All results surveyed here can be found in \cite{Mooney}.

Any convex function on an open set $v: \Omega\subset \R^n\to \R$ has an associated Borel measure called the Monge-Amp\`ere measure, defined by
\[
MA(v)(E)= |\partial v (E)|,
\]
where $|\partial v(E)|$ denotes the Lebesgue measure of the image of the subgradient map on $E\subset \Omega$. Given a Borel measure $\mu$, a solution to $MA(v)=\mu$ is called an \emph{Aleksandrov solution} to 
$
\det(D^2 v)= \mu;
$ 
if $v\in C^2$, this is the classical real Monge-Amp\`ere equation. 
We shall assume a two-sided density bound
\[
\det(D^2 v)=f \text{ in }B_1, \quad 0<\Lambda_1\leq f\leq \Lambda_2.
\]
Let $B_1\setminus \Sigma$ be the set of strictly convex points of $v$, namely there is a supporting hyperplane touching the graph of $v$ only at one point. Then Caffarelli \cite{Caff1}\cite{Caff2}\cite{Caff4} shows
\begin{itemize}
	\item
	If $f\in C^\gamma(B_1)$, then $v\in C^{2,\gamma}_{loc}(B_1\setminus \Sigma)$. Then by Schauder theory, if $f$ is smooth, then $v$ is smooth in $B_1\setminus \Sigma$.
	\item
	If $L$ is a supporting affine linear function to $v$, such that the convex set $\{v=L\}$ is not a point. Then $\{v=L\}$ has no extremal point in the interior of $B_1$. 
	\item
	The above affine linear set $\{v=L\}$ has dimension $k< n/2$. 
\end{itemize}

Mooney \cite{Mooney} shows further that
\begin{itemize}
	\item The singular set $\Sigma$ has $(n-1)$-Hausdorff measure zero. Consequently $B_1\setminus \Sigma$ is path connected (because a generic path joining two given points does not intersect a subset of zero $(n-1)$-Hausdorff measure).

	\item  The solution $v\in W^{2,1}_{loc}(B_1)$ even if $\Sigma$ is nonempty.
\end{itemize}

\begin{rmk}\label{Mooneyremark}
	A classical counterexample of Pogorelov shows that for $n=3$, the singular set $\Sigma$ can contain a line segment. This is generalised by Caffarelli \cite{Caff4}, who for any $k<n/2$ constructs examples where $f$ is smooth but $\Sigma$ contains a $k$-plane. A surprising example of Mooney \cite{Mooney} shows that the Hausdorff dimension of $\Sigma$ can be larger than $n-1-\epsilon$ for any small $\epsilon$. This means the local regularity theory surveyed above is essentially optimal.
\end{rmk}

\subsection{Special Lagrangian fibration}\label{SpecialLagrangiansurvey}

%Recall that a real $n$-dimensional submanifold $L$ of a compact CY n-fold $(X, \omega, J, \Omega)$ is called a \emph{special Lagrangian} with phase angle $\theta$ if
%\[
%\omega|_L=0,\quad \text{Im}(e^{-i\theta}\Omega)|_L=0.
%\]

The classical result of McLean says that the deformation theory of special Lagrangians with phase $\theta$ is unobstructed, and the first order deformation space is isomorphic to $H^1(L, \R)$. Thus if $L$ is diffeomorphic to $T^n$, then the deformation space is $n$-dimensional, compatible with the SYZ conjecture that $X$ admits a special Lagrangian $T^n$-fibration. A sufficient condition to construct Slag fibrations, under the very strong hypothesis of collapsing metric with locally bounded sectional curvature, is obtained by Zhang \cite[Thm 1.1]{Zhang}.

The essence of Zhang's result is a standard application of the implicit function theorem, and we shall summarize the key points (\cf \cite[section 4]{Zhang} for more details). Denote $Y_r= T^n\times B(0, r)\subset T^n_{x_i}\times\R^n_{y_i}\simeq T^* T^n$, where $r\gg 1$ is fixed. The trivial example of a special Lagrangian fibration is the following: the CY structure is the flat model
\[
g= \sum (dx_i^2+ dy_i^2), \quad \omega= \sum dx_i \wedge dy_i, \quad \Omega= \bigwedge (dx_j+ \sqrt{-1}dy_j),
\]
and the Slag fibration is just the projection to the $\R^n_{y_i}$ factor, namely the tori $T^n\times \{y\}$ are special Lagrangians. Zhang considers a family of CY structures $(g_k, \omega_k, \Omega_k)$ converging to $(g, \omega, \Omega)$ in the $C^\infty$-sense on $Y_{2r}$ (which follows from his bounded sectional curvature assumptions by elliptic bootstrap), such that $\omega_k\in [\omega]\in H^2(Y_{2r}, \R)$. Small deformations of the standard $T^n$ fibres can be represented as graphs on $T^n$: for $y\in \R^n$ and a 1-form $\sigma$ on $T^n$ orthogonal to the harmonic 1-forms $dx_1, \ldots dx_n$, write
\[
L(y, \sigma)= \text{Graph}( x\mapsto y+ \sigma(x)  ) \subset T^*T^n.
\]
The condition for $L(y, \sigma)$ to be a special Lagrangian with respect to $(g_k, \omega_k, \Omega_k)$ is
\begin{equation}\label{SlagZhang}
\omega_k|_{ L(y,\sigma) }=0, \quad \text{Im}( e^{\sqrt{-1}\theta_k} \Omega_k  )|_{ L(y,\sigma)  }=0,
\end{equation}
where $\theta_k$ are chosen so that $\int_{T^n}  e^{\sqrt{-1}\theta_k}\Omega_k >0$.  Zhang shows by perturbation arguments that for each $y\in B(0, \frac{3r}{2})$ and $k\geq k_0\gg 1$, there is a unique $\sigma=\sigma_{k,y}$ such that $L(y, \sigma_{k,y})$ solves (\ref{SlagZhang}) with small norm bound $\norm{\sigma_{k,y}}<\delta\ll 1$. He then uses another implicit function argument to show that these special Lagrangians indeed define a local special Lagrangian $T^n$-fibration on some open subset of $Y_{3r/2}$ containing $Y_r$.

%\begin{thm}
%\cite[]{Zhang}
%Given $n\in \N$ and $\sigma>1$, there exists a constant $\epsilon = \epsilon(n, \sigma) > 0$,
%such that if $(X, \omega, J, g, Ω)$ is a compact Calabi-Yau n-fold with integral polarisation,  and on a metric ball $B_g(p,1)$ the injectivity radius and the sectional curvature
%\[
%inj <\epsilon, \quad \sup_{B_g(p,1)} |Rm|\leq 1,
%\]
%and
%\[
%[\Omega|_{B_g (p,\sigma inj_g (p)) } \neq  0 \in H^n(B_g(p, \sigma inj_g(p)), C),
%\]
%\end{itemize} 
%then there is an open subset $W \subset X$ satisfying that $B_g(p, \sigma inj_g(p)) \subset W$, and (W, ω, Ω)
%admits a special lagrangian fibration of a phase θ ∈ R,.
%\end{thm}

\section{Nonarchimedean geometry}\label{NAsurvey}

Non-archimedean (NA) pluripotential theory is a close analogue of K\"ahler geometry. Impressionistically,

\begin{itemize}
	\item NA geometry offers a natural language to describe the degeneration of complex manifolds into real simplicial/tropical objects.
	
	\item It systematically encodes the combinatorics of tropical geometry.

	\item
	Usual notions in K\"ahler geometry such as functions, line bundles, K\"ahler metrics, complex Monge-Amp\`ere  measures, have natural (albeit exotic looking) analogues in NA geometry.

	\item 
	An analogue of the Calabi conjecture holds in the NA context: one can solve the NA Monge-Amp\`ere equation.

	\item 
	Under additional hypotheses, the NA Monge-Amp\`ere measure agrees with the real Monge-Amp\`ere measure.
\end{itemize}

We shall explain below that
the basic concepts of NA pluripotential theory can be motivated from the heuristic principle that \emph{NA geometry is the limit of complex geometry in the hybrid topology}. For some imprecise intuition, one may imagine non-archimedean geometry approximately as the SYZ base, and the hybric topology convergence roughly amounts to the collapse of an SYZ fibration to its base. Our persepctive is heavily influenced by Boucksom et al. \cite{Boucksom}\cite{Boucksom1}\cite{Boucksomsemipositive}\cite{Boucksomsurvey}.

\subsection{Berkovich space, hybrid topology}\label{Berkovichspace}

We mentioned in section \ref{Blowup} that  for a given polarized algebraic degeneration, the choice of snc models is highly non-unique.
There are two viewpoints on extracting invariant information:
 
 \begin{itemize}
 	\item In birational geometry, one aims to find optimal representatives via the minimal model program. Typically, this will leave the snc world, and require divisorial log terminal models \cite{NicaiseXu}\cite{NicaiseXuYu}, but the minimal models may still be non-unique.

 	\item In non-archimedean geometry, one looks simultaneously at the tower of all snc models, and take the formal limit of their dual complexes, known as the Berkovich space.

 \end{itemize}

Good references can be found in \cite[A]{KS}  \cite[Appendix]{Boucksom1}\cite[chapter 2,3]{Boucksomsemipositive}.

An insight of Berkovich is that by thinking of points as multiplicative seminorms, one obtains a kind of geometry analogous to complex manifolds.
Let  $K\simeq\C(\!(t)\!)$ be equipped with its standard absolute value $|\cdot |_0=e^{-ord_t}$ where $ord_t$ is the valuation defined by the vanishing order. Its ultrametric property 
\[
|f+g|_0\leq \max\{ |f|_0, |g|_0  \}
\]
gives the name `non-archimedean' to the subject. Let $X_K$ be a smooth, geometrically connected, projective scheme over $\text{Spec}(K)$; the main examples come from base changing an algebraic degeneration family $X$ over a punctured curve. Choose a finite cover of $X_K$ by affine open sets of the form $U=\text{Spec}(A)$, where $A$ is a finitely generated $K$-algebra. The space $U^{an}$ is defined as the set of all multiplicative seminorms $|\cdot|_x: A\to \R_{\geq 0}$ extending the absolute value of $K$, endowed with the weakest topology so that the function $x\mapsto |f|_x $ is continuous  for any $f\in A$. The \emph{Berkovich space} $X_K^{an}$ is then obtained by gluing together $U^{an}$; the notation stands for `analytification'. As a topological space $X_K^{an}$ is compact and Hausdorff. In the CY case, the point-set description of $X_K^{an}$ is meant to encode information about the base of the SYZ fibration; there is also a natural structure sheaf which encodes information about the complex structure
\cite{KS}.

Let $R\simeq \C[\![t ]\!]$. The concept of models over $\text{Spec}(R)$ is entirely analogous to the case over algebraic curves. The dual intersection complexes $\Delta_{\mathcal{X}}$ for snc models over $\text{Spec}(R) $ can be compared with $X_K^{an}$ through two natural maps:

\begin{itemize}
	\item There is a continuous \emph{embedding map} $emb: \Delta_{\mathcal{X} }\to X_K^{an}$. Writing $\mathcal{X}_0=\sum b_i E_i$, each divisor $E_i$ defines $val_{E_i}= \frac{ord_{E_i} }{ b_i  }$ through the vanishing order $ord_{E_i}$, so that $e^{- val_{E_i} }$ is a point in $X_K^{an}$, called a \emph{divisorial point}. More generally, given a point $x=(x_0,\ldots x_p)$ in the interior of a face $\Delta_J\subset \Delta_\mathcal{X}$ corresponding to $E_J=\cap_0^p E_i$, we can associate a \emph{quasi-monomial valuation}: expanding any local function $f$ around $E_J$ in Taylor series,
	\[
	f=\sum_{ \alpha\in \N^{p+1} } f_\alpha z_0^{\alpha_0}\ldots z_p^{\alpha_p}, \quad f_\alpha\in K(E_J)
	\]
	then the quasi-monomial valuation is
	\[
	val_x(f) = \min \{  \sum_0^p \alpha_i x_i | f_\alpha\neq 0       \}.
	\] 
	Thus $x$ gives rise to a point $e^{-val_x}\in X_K^{an}$. We shall regard $\Delta_{\mathcal{X}}$ as a subset of $X_K^{an}$. In particular, the essential skeleton embeds into $X_K^{an}$.

	\item
	There is a continuous \emph{retraction map}  $r_{ \mathcal{X} }: X_K^{an}\to \Delta_{\mathcal{X} } $, which restricts to the identity on $\Delta_{  \mathcal{X}}\subset X^{an}$. 
	Any point $e^{-v}\in X_K^{an}$ admits
	a center on $\mathcal{X}$. 
	This is the unique scheme theoretic point
	$\xi \in X_0$ such that 
	$|f|_x \leq 1$ for $f\in \mathcal{O}_{\mathcal{X},\xi}$
	and $|f|_x < 1$ for
	$f\in m_{\mathcal{X},\xi}$.
	Let $J \subset I$ 
	be the maximal subset such that
	$\xi\in E_J$. Then
	$r_{ \mathcal{X} }(x) \in \Delta_{\mathcal{X} }$ 
	corresponds to the quasi-monomial valuation with the same value for
	$-\log |z_j|_x, j \in J$. Concretely, one should think the retraction map is about reading off logarithmic coordinates. 
	
	For another perspective, if $\mathcal{X}'$ is a blow up of $\mathcal{X}$, then there is a natural simplicial map $\Delta_{\mathcal{X}'}\to \Delta_{\mathcal{X}}$, which is identity on $\Delta_{\mathcal{X}}\subset \Delta_{\mathcal{X}'}$. The retraction map $X_K^{an}\to \Delta_{\mathcal{X}}$ can be viewed as a formal limit for very large $\mathcal{X}'$.

\end{itemize}

\begin{rmk}
	The retraction map depends on the choice of the \emph{model}. There are examples where two models $\mathcal{X}$ and $\mathcal{X}'$ define the same $\Delta_{\mathcal{X}}$ as a subset of $X_K^{an}$, but the retraction maps are different \cite[Appendix]{GublerJill}.
\end{rmk}

With these comparison maps, the Berkovich space $X_K^{an}$ is homeomorphic to the inverse limit of the dual intersection complexes of the snc models:
\[
X_K^{an}\simeq  \varprojlim_{\text{snc models} } \Delta_{ \mathcal{X} }
\]
Conceptually, \emph{an snc model gives a finite approximation of the Berkovich space}.

%A \emph{multiplicative seminorm} on $A$ is a function $| \cdot|_x: A\to \R_{\geq 0}$, such that $|1|_x=1, |f+g|_x\leq |f|_x+ |g|_x$ and $|fg|_x=|f|_x|g|_x$ for $f,g\in A$. Equivalently, 

We now indicate how NA geometry is unified with complex geometry. Consider an algebraic degeneration $X$ over a punctured curve. Let $|\cdot|$ denote the usual absolute value for complex numbers. Given a $\C$-point $z\in X_t$ for $0<|t|\ll 1$, inside some affine chart $U=\text{Spec}(A)$ of $X$, we can define a multiplicative seminorm $A\to \R_{\geq 0}$ (not non-archimedean!) 
\begin{equation}\label{hybridconvergence}
f\mapsto e^{- \log |f(z)|/\log |t|   }=|f(z)|^{1/|\log |t| |}.
\end{equation}
As a sequence of points $z$ move towards $t\to 0$, for any given meromorphic function $f=\sum a_k t^k$ on the base, $\lim_{t\to 0}\log |f(z)|/\log |t|= ord_0(f)$ which is the standard NA valuation on $K$. Thus the points on $X_K^{an}$ are natural limits of the multiplicative seminorms defined by $\C$-points on $X_t$. One can formalize this notion by introducing a \emph{hybrid topology} on $X\sqcup X_K^{an} $, so that $X_K^{an}$ takes the place of the central fibre \cite[Appendix]{Boucksom1}. The functions $f\in A$ then induce local continuous functions on $X\sqcup X_K^{an} $.

The `hybrid' space $X\sqcup \Delta_{\mathcal{X}}$ discussed in section \ref{volumeasymptoteessentialskeleton} can be understood as a finite approximation. Given an snc model $\mathcal{X}$, and take a sequence of $\C$-points $q_k$ tending to $e^{-v}\in X^{an}$, whose image under the retraction map $r_{\mathcal{X} }$ is $x=(x_0,\ldots x_p)\in \Delta_J\subset \Delta_{\mathcal{X}}$.  Tautologically $q_k$ concentrate near $E_J$, and in the local coordinates $z_0,\ldots z_p$, we have $\log |z_i(q_k)|/\log |t|\to v(z_i)= x_i $, which is equivalent to  $\text{Log}_{\mathcal{X} } (z_k)\to x=(x_0,\ldots x_p)\in \Delta_J\subset \Delta_{ \mathcal{X} }$. Formally, the topology on $X\sqcup X_K^{an}$ is the inverse limit of $X\sqcup \Delta_{\mathcal{X}}$ by taking all snc models.

%The difference between convergence on $X\sqcup \Delta_{\mathcal{X} }$ and $X\sqcup X_K^{an} $ is that, while on $X\sqcup \Delta_{\mathcal{X} }$ the local functions $\log |z_i|/\log |t|$ have continuous NA limits on $X\sqcup \Delta_{\mathcal{X} }$, on $X\sqcup X_K^{an} $ the same is true for \emph{more} local function, because certain meromorphic functions become holomorphic after blowup.

\subsection{Model functions, metrics, positivity}

We now discuss functions, line bundles, and metrics on $X_K^{an}$ \cite{Boucksomsemipositive}.
Given a model $\mathcal{X}$ over $\text{Spec}(R)$ and a Cartier divisor $D$ supported on the central fibre $\mathcal{X}_0$, we can associate a continuous function on $X_K^{an}$ by setting
\[
\phi_D(x)= \max\{  \log |f|_x : f\in \mathcal{O}_{\mathcal{X}  }(D)       \},
\]
The association $D\mapsto \phi_D$ extends by $\Q$-linearity.
Functions obtained in the $\Q$-span using all such choices of models and divisors are called \emph{model functions} on $X_K^{an}$, which form a dense subset of $C^0(X_K^{an})$. The restrictions of such functions to dual intersection complexes are \emph{piecewise affine}.

To understand the complex geometric meaning, we think of models base changed from snc models over an algebraic curve $S$. The divisor $D$ prescribes a class of functions $\phi$ on the total space of the snc model with \emph{analytic singularities}: 
\[
\phi= \log |f|+ C^\infty \text{ function},
\]
where $f$ is a local defining function of $D$. When we consider the rescaling of the restrictions to $X_t$
\[
\phi_t= \frac{1}{ \log |t|  } \phi|_{X_t},
\]
only the singular term is relevant in the limit $t\to 0$, and $\phi_t$ converge to $-\phi_D$ in the hybrid topology.

We think about line bundles on $X_K^{an}$ via the GAGA principle: the line bundles on $X_K^{an}$ correspond to the line bundles $L$ on the scheme $X_K$. A \emph{continuous metric} on $L$ assigns to each local section $s$ a nonnegative continuous  local function $\norm{s}$ on open subsets of $X_K^{an}$, compatible with the sheaf structure, such that $\norm{fs}(x)= |f|_x\norm{s}(x)$, and $\norm{s}>0$ if $s$ is a local frame of $L$.
Given a continuous metric, any other continuous metric on $L$ is of the form $\norm{\cdot} e^{-\phi}$ for some $\phi\in C^0(X^{an})$, analogous to the usual relation between Hermitian metrics and K\"ahler potentials. As such $\phi$ is referred to as a \emph{potential} function.

Given a model $\mathcal{X}$ for $X_K$, a model $\mathcal{L}$ of $L$ is a line bundle $\mathcal{L}\to \mathcal{X}$ with $\mathcal{L}|_X=L$. To this data we can associate a unique metric $\norm{\cdot}_{ \mathcal{L} }$ on $L$ with the following property: if $s$ is a nowhere vanishing local section of $\mathcal{L}$ on an open set $\mathcal{U}\subset \mathcal{X}$, then $\norm{s}_{ \mathcal{L} } \equiv 1$ on $\mathcal{U}\cap X_K$. This is well defined because any two such sections differ by the multiplication of an invertible function, whose NA absolute value equals the constant one. One can extend this construction to $\Q$-line bundles, and the metrics arising this way are called \emph{model metrics}. They are dense within the continuous metrics.

To see the complex geometric interpretation, we imagine a line bundle $\mathcal{L}$ on some snc models $\mathcal{X}$ over an algebraic curve. Equip $\mathcal{L}$ with any smooth Hermitian metric $h$. Given a local section $s$ of $\mathcal{L}$, the prescription compatible with (\ref{hybridconvergence}) is to consider the local functions on $X_t$ 
\[
z\mapsto  |s(z)|_h^{ 1/|\log |t||  }.
\]
Taking the limit as $t\to 0$, we precisely get the model metric. Notice the ambiguity in the choice of the Hermitian metric is obliterated in the limit.

A paramount notion in K\"ahler geometry is the \emph{positivity} of the metric, usually phrased in terms of psh properties of the potential. The above discussion suggests that in the NA setting, namely $t\to 0$, such a notion should be expressible as a numerical property of the line bundle.

\begin{Def}\cite[Thm. 2.17]{Boucksom}
	(Semipositivity I) 
	Let $\norm{\cdot}$ be a model metric on $L$, associated to a $\Q$-line bundle $\mathcal{L}$ on a model $\mathcal{X}$ of $X_K$. Then
	\begin{itemize}
		\item the metric $\norm{ }$ is a semipositive model metric iff $\mathcal{L}$ is nef, namely $\mathcal{L}\cdot C\geq 0$ for any projective curve $C$ contained in $\mathcal{X}_0$;
		\item
		a continuous metric $\norm{ } e^{-\phi}$ is \emph{semipositive} iff it is the uniform limit of some sequence of semipositive model metrics on $X_K^{an}$.
	\end{itemize}
\end{Def}

\begin{rmk}
	The advantage of `nef' instead of `ample' is that if we blow up the model further, the pullback of the model line bundle will stay nef, but ampleness will be lost.	
\end{rmk}

In K\"ahler geometry the definition of psh function is local in the complex charts. Since the dual intersection complexes are simplicial objects, one would expect the NA analogous notion to be related to convex functions. This intuition is partially valid:

\begin{prop}\label{convexityonfaces}
	\cite[Prop 5.9]{Boucksomsemipositive} Let $\mathcal{X}$ be an snc model for $X_K$, and $\mathcal{L}\to \mathcal{X}$ be a model line bundle for $L\to X$, with associated closed (1,1)-form $\theta$. Then the restriction of any continuous $\theta$-psh function to any face of $\Delta_{\mathcal{X} }\subset X_K^{an}$ is convex.
\end{prop}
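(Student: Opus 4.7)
The strategy is to reduce to model functions by approximation and then carry out an intersection-theoretic slope computation on a sufficiently fine snc model. First, by the definition of semipositivity recalled above, the continuous metric $\norm{\cdot}_{\mathcal{L}} e^{-\phi}$ being semipositive amounts to saying that $\phi$ is a uniform limit on $X_K^{an}$ of potentials of semipositive model metrics. Since the uniform limit of a sequence of convex functions on the simplex $\Delta_J$ is again convex, it suffices to prove the statement when $\phi$ is itself the potential of a semipositive model metric.

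Second, after passing to a sufficiently fine dominating snc model $\pi: \mathcal{X}' \to \mathcal{X}$, we may assume that $\phi$ is the model function associated to a nef $\Q$-line bundle $\mathcal{L}'$ on $\mathcal{X}'$ whose induced continuous metric on $L$ agrees with $\norm{\cdot}_{\mathcal{L}} e^{-\phi}$. Under the natural inclusion $\Delta_{\mathcal{X}} \hookrightarrow \Delta_{\mathcal{X}'}$, the restriction $\phi|_{\Delta_J}$ is piecewise affine with respect to a subdivision of $\Delta_J$ coming from $\Delta_{\mathcal{X}'}$: the top-dimensional cells of this subdivision correspond to the irreducible components of $\pi^{-1}(E_J) \subset \mathcal{X}'_0$ that dominate $E_J$, while codimension-one walls correspond to pairs of such components meeting along a divisor of $\mathcal{X}'_0$.

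Third, convexity of a piecewise affine function on $\Delta_J$ reduces to a local convexity check across each codimension-one wall of the subdivision, i.e.\ to showing that the gradient of $\phi|_{\Delta_J}$ jumps in a convex direction when one crosses such a wall. A local computation in snc coordinates on $\mathcal{X}'$, using the defining relation $t = \prod z_i^{b_i}$ and the description $\phi = \max\{\log|f|_{\cdot} : f \text{ a local section of } \mathcal{O}_{\mathcal{X}'}(\mathcal{L}')\}$, identifies this slope jump with the intersection number $\mathcal{L}' \cdot C$, where $C \subset \mathcal{X}'_0$ is a suitably chosen curve transverse to the exceptional component corresponding to the wall (morally, a fibre of the blow-down $\pi$ in the direction dual to the wall). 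Nefness of $\mathcal{L}'$ then yields $\mathcal{L}' \cdot C \geq 0$, which is precisely the convex slope jump required.

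The main obstacle is the third step: translating the gradient jump of a piecewise affine function on $\Delta_J$ into an honest intersection number on the central fibre, while correctly accounting for the multiplicities $b_i$ of the components of $\mathcal{X}'_0$ and the combinatorics of how $\Delta_{\mathcal{X}'}$ subdivides $\Delta_{\mathcal{X}}$ under $\pi$. Conceptually this is Legendre/tropical duality converting nefness along curves in $\mathcal{X}'_0$ into convexity of the associated model function on dual complexes; once the local set-up is in place the verification at a single wall is short, but doing it uniformly over all walls and all refinements requires some careful bookkeeping.
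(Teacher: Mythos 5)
First, note that the paper itself does not prove this proposition: it is quoted from \cite[Prop. 5.9]{Boucksomsemipositive}, so there is no internal argument to compare with, and your proposal has to stand on its own.

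Your first two reductions are fine under the survey's definition of semipositivity (a uniform limit of convex functions is convex, and a semipositive model metric is the one attached to a nef model $\Q$-line bundle $\mathcal{L}'$ on some $\mathcal{X}'$ dominating $\mathcal{X}$). The gap is in your third step, which is the entire content of the statement. Two concrete problems. (a) The combinatorics you describe is not right: if $\mathcal{X}'$ is the blow up of $\mathcal{X}$ along $E_J$ itself, then $\pi^{-1}(E_J)$ has a single component dominating $E_J$, yet the induced subdivision of $\Delta_J$ has $|J|$ top-dimensional cells; and the curves dual to walls in the toric dictionary are the one-dimensional closed strata of $\mathcal{X}'_0$, not fibres of the blow-down $\pi$. (b) More seriously, the inclusion $\Delta_{\mathcal{X}}\subset\Delta_{\mathcal{X}'}$ is only \emph{piecewise} affine: a point $x\in\Delta_J$ has $\Delta_{\mathcal{X}'}$-coordinates $\mathrm{val}_x(z'_j)=\min\{\langle\alpha,x\rangle:\ \text{Taylor coefficient}\neq 0\}$, a concave piecewise linear function of $x$, so straight segments of $\Delta_J$ bend inside $\Delta_{\mathcal{X}'}$, and the decomposition of $\Delta_J$ into affinity domains of $\phi$ is cut out both by the face structure of $\Delta_{\mathcal{X}'}$ and by the Newton polyhedra of the transition functions. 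Since the model function of $\mathcal{L}'$ is \emph{affine} on each face of $\Delta_{\mathcal{X}'}$, there is no ambient convexity on $\Delta_{\mathcal{X}'}$ to restrict; the kink of $\phi|_{\Delta_J}$ across a wall of the induced decomposition mixes the bending of the embedding with the coefficients of the vertical divisor, and identifying it with a single intersection number $\mathcal{L}'\cdot C\geq 0$ is precisely what would have to be proved. As written, the key implication ``nef on $\mathcal{X}'_0$ $\Rightarrow$ convex on the faces of the coarser complex'' is asserted by analogy with the toric case rather than established, and you flag this yourself.

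If you want a route that avoids this bookkeeping, use the Fubini--Study approximation instead of general model metrics: since $L$ is ample, a continuous semipositive metric is a uniform limit of NA Fubini--Study metrics (Semipositivity II). In a local frame of $\mathcal{L}$ near the generic point of $E_J$, an FS potential has the form $\frac{1}{m}\max_j\{\log|g_j|_x+c_j\}$ with $g_j$ regular at that point (after clearing powers of $t$, which only shifts by constants on $\Delta_{\mathcal{X}}$), and $x\mapsto\log|g_j|_x=-\min\{\langle\alpha,x\rangle:(g_j)_\alpha\neq0\}$ is convex and piecewise linear on $\Delta_J$; hence every FS potential, and therefore every uniform limit, is convex on each face. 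This is much closer to how such statements are actually proved in the literature than the intersection-theoretic wall-crossing you propose.
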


The picture is that general $\theta$-psh functions define convex functions on the faces of $\Delta_{ \mathcal{X} }$, and among them the $\theta$-psh model functions give piecewise affine approximations with finer and finer grids.

\subsection{Approximation by Fubini-Study metrics}\label{FubiniStudyapproximation}

A fundamental result in K\"ahler geometry is that any K\"ahler metric in an integral class can be approximated by Fubini-Study metrics associated with projective embeddings. While the usual Fubini-Study metric depends on a choice of a Hermitian inner product on the $\C$-vector space of global sections, the NA analgoue depends on a NA norm on the $K$-vector space $V=H^0(X_K,m L)$ for $m\gg 1$, with the ultrametric property $\norm{x+y}_V\leq \max\{ \norm{x}_V, \norm{y}_V \}$. In our case $K=\C(\!(t)\!)$ one can select a $K$-basis $s_0, s_1, \ldots s_N$ for $V$, such that
\[
\norm{ a_0s_0+ \ldots+ a_N s_N}_V=\max\{ |a_0|\norm{s_0}_V,\ldots ,|a_N|  \norm{s_N}_V  \}, \quad \forall a_i\in K.
\]
The NA Fubini-Study metric on $L\to X_K$ can be defined as
\[
\norm{s}_{FS}(x)=  \inf_{ \tilde{s}\in V, \tilde{s}(x)=s^{\otimes m}(x)} \norm{ \tilde{s}}_V^{1/m}, \quad \forall x\in X_K^{an}.
\]
Concretely in the orthogonal basis, written in a local trivialisation,
\begin{equation}\label{FubiniStudyNA}
\norm{s}_{FS }(x)= \frac{ |s(x)|}{ \max_j \{  |s_j(x)|/\norm{s_j}_V  \}^{1/m}   }, \quad \forall x\in X^{an}.
\end{equation}
A NA analogue of the \emph{Fubini-Study approximation theorem} gives an alternative view on semipositive metrics:

\begin{prop}
	(Semipositivity II)
	\cite{ChenMoriwaki} Assume $L\to X_K$ is ample. Then a continuous metric on $L$ is semipositive iff it can be written as a uniform limit of Fubini-Study metrics.
\end{prop}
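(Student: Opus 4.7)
The plan is to prove both implications. The easier direction is that any FS metric is itself a semipositive model metric, so uniform limits of FS metrics are automatically semipositive by the second clause of the definition. The harder direction is to approximate an abstractly given semipositive metric by FS metrics, for which the ampleness of $L$ is essential.

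For the easy direction, I would start from an orthogonal basis $s_0,\dots,s_N$ for $(V,\norm{\cdot}_V)$ with $V=H^0(X_K,mL)$ and the explicit formula \eqref{FubiniStudyNA}. After rescaling each $s_j$ by an element of $K^\ast$ so that $\norm{s_j}_V=1$, these sections generate a linear system inducing a morphism $\varphi:X_K\to \mathbb{P}^N_K$; spreading out and resolving, one obtains a projective model $\mathcal{X}$ of $X_K$ over $R$ together with a morphism $\tilde\varphi:\mathcal{X}\to \mathbb{P}^N_R$, and the pullback $\mathcal{L}:=\tilde\varphi^\ast\mathcal{O}(1)$ is an integral model of $mL$. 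A direct computation in local trivializations shows $\norm{\cdot}_{FS}^{\otimes m}=\norm{\cdot}_{\mathcal{L}}$, while $\mathcal{O}(1)$ is ample on $\mathbb{P}^N_R$, hence $\mathcal{L}$ is nef on $\mathcal{X}$. Thus $\norm{\cdot}_{FS}$ is a semipositive model metric, and any uniform limit of FS metrics is semipositive.

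For the hard direction, I would first use the definition of semipositivity to reduce the claim to a single semipositive model metric $\norm{\cdot}_{\mathcal{L}}$ attached to a nef $\mathbb{Q}$-model $\mathcal{L}$ of $L$. The natural construction is to put on $V_m=H^0(X_K,mL)$ the supremum norm $\norm{s}_{V_m}:=\sup_{X_K^{an}}\norm{s}_{\mathcal{L}}^{\otimes m}$ (after rescaling so that an orthogonal $K$-basis exists, using that $K$ is spherically complete), form the associated FS metric $\norm{\cdot}_{FS,m}$, and show that $\norm{\cdot}_{FS,m}^{1/m}\to \norm{\cdot}_{\mathcal{L}}$ uniformly on $X_K^{an}$. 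The inequality $\norm{\cdot}_{FS,m}^{1/m}\leq \norm{\cdot}_{\mathcal{L}}$ is immediate from the definition of the sup norm; the content is the matching lower bound, which at each $x$ amounts to producing a section $s\in V_m$ with $|s(x)|/\norm{s}_{V_m}^{1/m}$ close to $\norm{\cdot}_{\mathcal{L}}(x)$.

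The main obstacle is precisely this quantitative separation property, which is the non-archimedean analogue of the Tian--Yau--Zelditch Bergman kernel asymptotic. The ampleness of $L$ enters here crucially: for $m\gg 1$ the global sections of $m\mathcal{L}$ separate the divisorial and quasi-monomial points of the Berkovich space, and by asymptotic Riemann--Roch together with a vanishing/extension argument (the non-archimedean avatars of Ohsawa--Takegoshi and multiplier-ideal techniques, as developed by Boucksom--Eriksson) one can exhibit, at any divisorial valuation $v$, a section $s\in H^0(\mathcal{X},m\mathcal{L})$ whose order along the corresponding divisor realises (up to $o(m)$) the value $m\,\phi_{\mathcal{L}}(v)$. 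Since divisorial points are dense in $X_K^{an}$ and the relevant functions are continuous, this pointwise statement upgrades to uniform convergence on the compact space $X_K^{an}$, completing the approximation.
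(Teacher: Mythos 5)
The paper itself offers no proof of this proposition --- it is quoted directly from Chen--Moriwaki --- so your argument has to stand on its own. Your easy direction is essentially fine: after rescaling, the chosen basis induces a morphism to $\mathbb{P}^N_R$, the pullback of $\mathcal{O}(1)$ is nef on the resulting model, and the associated model metric is the $m$-th power of the FS metric; the only caveat is that the weights $\norm{s_j}_V$ need not lie in the value group of $K$, so in general the FS metric is not literally a model metric and one must perturb the weights (or pass to $\Q$-models after base change) before invoking the definition of semipositivity. The reduction of the converse to a single nef model $(\mathcal{X},\mathcal{L})$, the choice of sup norms on $V_m=H^0(X_K,mL)$, and the identification of the crux --- a lower bound on the distortion, i.e. producing sections of $m\mathcal{L}$ whose value at a given point nearly realises the metric --- are also the right skeleton.

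However, the two steps that carry all the weight are not proved. First, the section-production step is delegated to ``asymptotic Riemann--Roch plus non-archimedean Ohsawa--Takegoshi/multiplier-ideal techniques''; but this is exactly the content of the theorem under discussion (it is the continuity-of-envelopes/extension-property result of Boucksom--Favre--Jonsson and of Chen--Moriwaki), so as written the argument cites the conclusion rather than proving it; note also that the multiplier-ideal route requires residue characteristic zero (harmless for $K=\C(\!(t)\!)$, but a genuinely different toolkit from the cited reference, which proceeds via an extension theorem for semipositive metrics over general non-archimedean fields). Second, the closing inference ``pointwise convergence on the dense set of divisorial points plus continuity implies uniform convergence'' is not a valid principle: density and continuity of each term give nothing without equicontinuity or monotonicity. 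The standard repair is to use $V_m\cdot V_{m'}\subset V_{m+m'}$, which makes $\frac{1}{m}\log$ of the distortion superadditive, so the FS approximants increase along multiples of a fixed $m_0$ towards their envelope, and then Dini's theorem on the compact space $X_K^{an}$ upgrades to uniform convergence \emph{provided} the limit is continuous and equals $\phi_{\mathcal{L}}$ --- and that continuity is again the hard theorem, not a formal consequence of density. So your proposal is a sensible roadmap, but the core of the proposition remains unproved in it.
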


\begin{rmk}
In the approximation theorem we may assume $s_i$ to be finite Laurent polynomials.
\end{rmk}

For the complex geometric interpretation, we assume as usual $X_K$ is the base change of an algebraic degeneration family $X$, with an ample polarisation line bundle $L$. For any given NA Fubini-Study metric (\ref{FubiniStudyNA}), we can associate a family of Fubini-Study metrics on $(X_t, L)$: 
\begin{equation}\label{FubiniStudyXt}
\norm{s}_{FS,t}(z)=  \frac{ |s(z)|}{ \{ \{ \sum_j  |s_j(z,t) |^2 |t|^{2\log \norm{s_j}_V}  \}^{1/2m}   }, \quad \forall z\in X_t.
\end{equation}
Here $s_j$ make sense for finite $t$ because they are selected as finite Laurent polynomials in $t$. As $t\to 0$, the Fubini-Study metrics converge to the NA analogue, or more precisely $\norm{s}_{FS,t}^{1/|\log |t||  }$ converges to (\ref{FubiniStudyNA}) in the hybrid topology on $X\sqcup X_K^{an}$.

\subsection{NA Monge-Amp\`ere measure}\label{NAMAmeasure}

The NA Monge-Amp\`ere measure \cite{ChambertLoir} is defined through \emph{intersection theory} in a somewhat counterintuitive manner. As a motivation, we consider the complex analytic setting of an snc model $\mathcal{X}$ over an algebraic curve, equipped with a Hermitian line bundle $(\mathcal{L}, h)$ with curvature form $\theta$ in the class $c_1(\mathcal{L})$. Then $\theta^n$ defines a family of $n$-forms on $X_t$, such that $\int_{X_t}\theta^n$ equals the intersection number $(L^n)$. The question is to describe the limit of these $n$-forms, when we view $X_t$ as converging to the dual intersection complex $\Delta_{\mathcal{X} }$ (\cf section \ref{volumeasymptoteessentialskeleton}).

We write $X_0=\sum_{i\in I} b_i E_i$.
Recall that the regions on $X_t$ corresponding to the faces in the dual intersection complex are from the algebraic perspective only small neighbourhoods of $E_J$. Thus the limit of $\theta^n|_{X_t}$ can only be supported at the vertices of $\Delta_{\mathcal{X}}$, which correspond to the components $E_i$. The amount of delta masses concentrated at the vertices are
\[
b_i\int_{E_i} \theta^n= b_i\mathcal{L}^n\cdot E_i,
\]
where $b_i$ appears due to the multiplicity of the sheets. Reassuringly,
\[
\sum_i b_i\mathcal{L}^n\cdot E_i= (L^n)
\]
gives the correct total mass.

Back to the NA setting, 
given a model $\Q$-line bundle $\mathcal{L}\to \mathcal{X}$ for  $L\to X_K$, we write $\mathcal{X}_0= \sum_i b_i E_i$, and denote the divisorial points associated to $E_i$ as $q_i$. We
can then \emph{define} the NA Monge-Amp\`ere measure for the model metric $\norm{\cdot}_ {\mathcal{L} }$ as the following signed atomic measure supported at $q_i\in X_K^{an}$:
\[
MA( \norm{\cdot}_{ \mathcal{L} } )= \sum_{E_i} b_i (\mathcal{L}^n\cdot E_i) \delta_{q_i}
\] 
This definition is compatible with pullback of line bundles by the projection formula, and ensures the total mass is the intersection number $(L^n)$. If $\norm{\cdot}_{\mathcal{L}  }$ is furthermore semipositive, then the intersection numbers are non-negative, so $MA( \norm{\cdot}_{ \mathcal{L} } )$ is a measure.

The theory of NA MA measures bears  strong resemblance to the complex MA measures \cite{Boucksom}\cite{Boucksomsurvey}:

\begin{itemize}
\item In the complex analytic world, one first define the complex MA for smooth potentials. A general continuous semipositive potential in a K\"ahler class is the uniform limit of smooth potentials, and its complex MA measure is then determined by the weak continuity under $C^0$-convergence.

\item In the NA world, one first define the NA MA measure for the model metrics. A general continuous semipositive metric on $L$ is the uniform limit of a sequence of continuous semipositive model metrics \cite[Cor. 8.8]{Boucksomsemipositive}, and its NA MA measure can be defined as the unique limiting Radon measure of the NA MA measures for the sequence \cite[Cor. 3.5]{Boucksom}.

\end{itemize}

Their main difference lies in the highly nonlocal appearance of the NA MA measure. The recent result of Vilsmeier \cite{Vilsmeier} offers a more concrete perspective:

\begin{prop}\label{NAMArealMAcomparisonProp}
	(NA MA-real MA comparison) Let $(\mathcal{X},\mathcal{L})$ be a semistable snc model of $(X_K,L)$, and $\text{Int}(\Delta_J)$ be an $n$-dimensional open face of $\Delta_{\mathcal{X} }$. Recall the retraction map $r_{\mathcal{X}}: X_K^{an}\to \Delta_{\mathcal{X} }$. Let $\phi\in C^0(X_K^{an})$ be the potential of a semipositive metric $\norm{\cdot}_{ \mathcal{L} }e^{-\phi}$, and suppose $\phi=\phi\circ r_{\mathcal{X}}$ on $r_{\mathcal{X}}^{-1}(\Delta_J )$,   then on $\text{Int}(\Delta_J)$ the pushforward of the NA MA measure
	\[
	r_{\mathcal{X}* } MA( \norm{\cdot}e^{-\phi} )= n! MA_\R (\phi|_{\text{Int}(\Delta_J)})
	\]
	equals the real MA measure of the convex function $\phi|_{\text{Int}(\Delta_J)}$ up to a factor $n!$.
\end{prop}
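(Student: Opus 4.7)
The overall plan is to reduce the equality to a local toric computation at each vertex of a fine simplicial subdivision of $\Delta_J$, using the density of piecewise affine potentials and the weak continuity of both Monge-Amp\`ere operators. The restriction $\phi|_{\Delta_J}$ is convex by Proposition \ref{convexityonfaces}, and I would first approximate it uniformly by a sequence of convex, rational, piecewise affine functions $\phi_k$ adapted to progressively finer simplicial subdivisions of $\Delta_J$. By standard toroidal geometry, each $\phi_k$ extends via $r_{\mathcal{X}}$ (and the identity outside $\Delta_J$) to a semipositive model potential associated to a refinement $\mathcal{X}_k\to \mathcal{X}$ that blows up only above $E_J$. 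Weak continuity of $MA(\|\cdot\|e^{-\phi})$ under $C^0$-convergence of semipositive potentials, together with weak continuity of the real Monge-Amp\`ere operator on convex functions, then reduces the problem to verifying the identity for each $\phi_k$.

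For a piecewise affine $\phi_k$, the associated model line bundle $\mathcal{L}_k$ is a twist of $\mathcal{L}$ by a vertical divisor, and its NA Monge-Amp\`ere measure is the atomic measure $\sum_{i}(\mathcal{L}_k^n\cdot E_i)\,\delta_{q_i}$, where $E_i$ ranges over the components of $\mathcal{X}_{k,0}$ and $b_i=1$ by semistability. Among these, only divisorial points $q_v$ corresponding to interior vertices $v$ of the subdivision of $\Delta_J$ retract into $\mathrm{Int}(\Delta_J)$, and each such $q_v$ maps to $v$ under $r_{\mathcal{X}}$. Hence on $\mathrm{Int}(\Delta_J)$ one obtains
\[
r_{\mathcal{X}*}MA(\|\cdot\|_{\mathcal{L}_k})=\sum_v (\mathcal{L}_k^n\cdot E_v)\,\delta_v,
\]
and the identity reduces to computing the mass at each $v$.

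The key local computation is as follows. Near a $0$-stratum $E_v$, semistability ($t=\prod_{0}^{n} z_i$) exhibits a neighborhood of $E_v$ in $\mathcal{X}_k$ as a toric model, with $E_v$ the compact toric variety associated to the star of $v$ in the refined complex. The restriction $\mathcal{L}_k|_{E_v}$ corresponds under the toric dictionary to the piecewise linear function obtained by centering $\phi_k$ at $v$ and reading off its restriction to that star. The classical toric intersection formula then yields
\[
\mathcal{L}_k^n\cdot E_v=(\mathcal{L}_k|_{E_v})^n = n!\,\mathrm{Vol}_{\mathrm{Eucl}}(N_v(\phi_k)),
\]
where $N_v(\phi_k)$ is the Newton polytope, i.e.\ the convex hull of the slopes of $\phi_k$ on simplices incident to $v$. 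On the real side, direct computation of the subgradient image at each vertex gives $MA_{\R}(\phi_k)|_{\mathrm{Int}(\Delta_J)}=\sum_v \mathrm{Vol}_{\mathrm{Eucl}}(N_v(\phi_k))\,\delta_v$, and matching atomic measures yields the factor $n!$ claimed.

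The main obstacle I anticipate is precisely the toric identification in the third step: one must verify that the model Cartier divisor defining $\mathcal{L}_k$ is locally pulled back from the fan of $E_v$, so that $\mathcal{L}_k^n\cdot E_v$ is genuinely the self-intersection of a toric line bundle on $E_v$ with no contamination from neighboring strata. This is where the pullback condition $\phi=\phi\circ r_{\mathcal{X}}$ and the semistability $b_i=1$ both play essential roles: the former ensures $\phi_k$ depends only on the tropical coordinates in a neighborhood of $v$, while the latter makes the local geometry of $\mathcal{X}_k$ at $E_v$ honestly toric rather than a stacky quotient. A secondary technical point is ensuring the approximation $\phi_k\to \phi$ can be arranged to preserve both semipositivity and the pullback form via $r_{\mathcal{X}}$, which should follow from averaging over simplicial subdivisions and convex regularization on each face.
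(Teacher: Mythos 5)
The paper does not in fact prove this proposition: it is quoted from Vilsmeier \cite{Vilsmeier}, whose proof is via intersection theory, and what follows the statement in the paper is only a heuristic semiflat computation through the logarithm map. Your proposal is therefore an attempt to reconstruct the genuine argument, and its local core is the right mechanism: interior vertices $v$ of a rational simplicial subdivision of $\Delta_J$ correspond, by toroidal modification over the $0$-dimensional stratum $E_J$ (semistability making the local model honestly toric), to projective toric exceptional components $E_v$; $\pi^*\mathcal{L}$ restricts trivially to $E_v$ since $E_v$ lies over a point, so $\mathcal{L}_k^n\cdot E_v$ is a toric self-intersection equal to $n!\,\mathrm{Vol}(\partial\phi_k(v))$, matching the atomic real Monge-Amp\`ere measure of the convex piecewise affine $\phi_k$; and only these interior vertices retract into $\mathrm{Int}(\Delta_J)$. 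This is indeed where the factor $n!$ comes from.

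The reduction to such $\phi_k$, however, has a genuine gap. The hypothesis is only that $\phi=\phi\circ r_{\mathcal{X}}$ over $r_{\mathcal{X}}^{-1}(\Delta_J)$; globally $\phi$ need not factor through $r_{\mathcal{X}}$, so potentials of the form $\phi_k\circ r_{\mathcal{X}}$ with $\phi_k$ piecewise affine on $\Delta_{\mathcal{X}}$ cannot converge uniformly to $\phi$ on $X_K^{an}$, and the only continuity statement available (weak convergence of $MA$ under uniform convergence of \emph{semipositive} potentials) then does not apply; while gluing $\phi_k\circ r_{\mathcal{X}}$ over $\Delta_J$ to the original $\phi$ outside does not obviously yield a continuous semipositive metric. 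What is really needed is a locality property of the non-archimedean Monge-Amp\`ere measure (that its restriction to the open set $r_{\mathcal{X}}^{-1}(\mathrm{Int}(\Delta_J))$ depends only on the metric there), a substantial ingredient (Chambert-Loir--Ducros forms and their calculus) which your sketch neither states nor supplies. Relatedly, the semipositivity of the approximants is not a secondary technical point: convexity of $\phi_k$ on the face is necessary but far from sufficient for the twisted model bundle $\mathcal{L}_k$ to be nef on the whole central fibre of $\mathcal{X}_k$ --- this is essentially the extension problem the survey emphasizes is unsolved in general --- and without semipositivity the model Monge-Amp\`ere measures are merely signed, so the weak-continuity step collapses. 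Averaging over subdivisions and convex regularization on each face addresses neither difficulty.
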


The rigorous proof of this comparison uses intersection theory, and the following is a heuristic explanation. Consider an snc model $\mathcal{X}$ over an algbebraic curve as in the motivation, and assume furthermore that it is semistable. Recall our heuristic dictionary that a metric $\norm{\cdot}$ on $L\to X_K$ should encode a family of Hermitian metrics $h_t$ on $L\to X_t$, such that $h_t^{ 1/|\log |t||  }\to \norm{\cdot}^2$ in the hybrid topology, and the NA MA measure of $\norm{\cdot}$ should be the limit of the measures associated to the curvature forms of $h|_{X_t}$. We now focus on the neighbourhood of an $n$-dimensional open face $\text{Int}(\Delta_J)\subset \Delta_{ \mathcal{X} }\subset  \Delta_{ \mathcal{X} }\sqcup X  $, where we have local coordinates $z_0,\ldots z_n$ with $\prod_0^n z_i=t$, and $x_i=\frac{\log |z_i|}{\log |t|}$. In the local picture we identify metrics with potentials, so $\norm{\cdot}\sim e^{-\phi}$, and after ignoring $C^0$-fluctuation effects $h_t^{  1/|\log |t||}\sim e^{-  2\phi\circ \text{Log}_{\mathcal{X}} }$.
Imposing more smoothness assumptions, the curvature form of $h_t$ is approximately
\[
|\log |t|| dd^c\phi\circ\text{Log}_{ \mathcal{X}}= \frac{-1}{2\pi} \sum_{1\leq i,j\leq n} \frac{\partial^2\phi}{\partial x_i \partial x_j} dx_i \wedge d\text{arg}(z_j).
\]
The NA MA measure should agree with the limiting pushforward measure
\[
\lim_{t\to 0}\text{Log}_{ \mathcal{X}*} (|\log |t|| dd^c\phi\circ\text{Log}_{ \mathcal{X}} )^n = n!\det(D^2 \phi) |dx_1\ldots dx_n|=n! \text{MA}_\R(\phi)
\]
which equals the real MA measure up to the factor $n!$.

\begin{rmk}
	In this heuristic calculation, the assumption for $\phi$ to factor through the retraction map allows us to replace the hybrid space $X\sqcup X_K^{an}$ by its finite approximation $X\sqcup \Delta_{\mathcal{X} }$.
\end{rmk}

\subsection{NA Calabi conjecture}\label{NACalabi}

The central result of NA pluripotential theory is the solution to the NA analogue of the Calabi conjecture. A good survey is \cite{Boucksomsurvey}.

\begin{thm}
	\cite{Boucksom} Let $X_K$ be a smooth projective K-scheme arising from the base change of an algebraic degeneration family. Let $L$ be an ample line bundle on $X_K$, and $d\mu$ be a Radon probability measure supported on the dual intersection complex of some snc model of $X_K$. Then there is a unique continuous semipositive metric $\norm{\cdot}$ on $L$, such that
	\[
	MA( \norm{\cdot })= (L^n) d\mu.
	\]

\end{thm}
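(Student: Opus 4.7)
The plan is to follow the variational method of Berman--Boucksom--Guedj--Zeriahi, adapted to the non-archimedean setting. Fix a reference semipositive continuous model metric $\norm{\cdot}_0$ on $L$, so that any other continuous metric has the form $\norm{\cdot}_0 e^{-\phi}$ for $\phi\in C^0(X_K^{an})$, and the semipositive ones correspond to a subclass $PSH(L)$ (the uniform limits of semipositive model potentials). Normalize by $\sup_{X_K^{an} }\phi=0$. The idea is to realize the sought-after solution as the maximizer of a functional whose Euler--Lagrange equation is precisely $MA(\norm{\cdot}_0 e^{-\phi})=(L^n)\, d\mu$.

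First I would define the Monge--Amp\`ere energy $E(\phi)$ on model potentials by the usual telescoping formula
\[
E(\phi)= \frac{1}{n+1} \sum_{k=0}^{n} \int_{X_K^{an} } \phi\, MA(\norm{\cdot}_0 e^{-\phi})^k \wedge MA(\norm{\cdot}_0)^{n-k},
\]
defined through intersection theory on a common snc model dominating the models associated to $\phi$ and to $\norm{\cdot}_0$; extend $E$ to semipositive potentials by the appropriate upper/lower envelope procedure (as in section \ref{NAMAmeasure}, using density of model metrics). Then consider $F(\phi)= E(\phi)-\int_{X_K^{an} }\phi\, d\mu$, and maximize $F$ over the space of semipositive potentials with $\sup \phi=0$. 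The key analytic inputs are (i) upper semicontinuity of $E$ under decreasing limits, (ii) concavity and differentiability of $E$ along affine segments $\phi_s=(1-s)\phi_0+s\phi_1$, with derivative $\int (\phi_1-\phi_0)MA(\norm{\cdot}_0 e^{-\phi_s})$, and (iii) weak compactness of the space of normalized semipositive potentials, which is a non-archimedean analogue of the Hartogs-type compactness theorem for $PSH$ functions; the latter uses the fact that model potentials are piecewise affine on faces of dual complexes, combined with Proposition \ref{convexityonfaces}.

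Given a maximizer $\phi_*$, computing $\frac{d}{ds}\big|_{s=0^+} F(\phi_s)\leq 0$ against test variations, and then symmetrizing by considering both $\phi_s$ and the envelope $P(\phi_*+s\psi)$ to stay in $PSH(L)$, gives $\int \psi\, MA(\norm{\cdot}_0 e^{-\phi_*})=(L^n)\int \psi\, d\mu$ for a dense set of test functions $\psi$, hence $MA(\norm{\cdot}_0 e^{-\phi_*})=(L^n)\,d\mu$ as measures on $X_K^{an}$. Uniqueness, up to an additive constant in $\phi$, then follows from a non-archimedean Dirichlet-type integration by parts: if two semipositive solutions $\phi_1,\phi_2$ give the same MA, expanding $\int (\phi_1-\phi_2)[MA(\phi_1)-MA(\phi_2)]=0$ as a sum of non-positive intersection terms and invoking the orthogonality property for envelopes forces $\phi_1-\phi_2$ to be constant.

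The main obstacle I expect is the regularity step: producing a \emph{continuous} maximizer rather than a merely semipositive (possibly singular) one. In the complex case this is Ko\l odziej's $L^\infty$ estimate; in the NA world the analogue is obtained by transporting a $C^0$ bound from a sequence of model approximations, which in turn requires an NA capacity theory and an NA Ko\l odziej-type estimate bounding $\norm{\phi}_{C^0(X_K^{an})}$ in terms of an integrability hypothesis on $d\mu$. The hypothesis that $d\mu$ is a Radon probability measure supported on the dual complex of a \emph{single} snc model is exactly what lets one reduce to working downstairs on the finite-dimensional simplicial complex and hence bypass serious pluripotential-theoretic pathologies; turning this into a quantitative $C^0$ estimate, and then showing that the continuous envelope one constructs is actually the maximizer (rather than an upper-semicontinuous regularization of it), is where the bulk of the technical effort lies.
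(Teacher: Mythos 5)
Your proposal follows essentially the same route as the paper's sketch of the Boucksom--Favre--Jonsson variational method: maximizing $\mathcal{E}(\phi)-(L^n)\int\phi\,d\mu$ over a compact (modulo constants) class of semipositive potentials, passing from maximizer to critical point via envelopes and differentiability/orthogonality, proving continuity by a non-archimedean Kolodziej-type estimate, and deducing uniqueness from concavity of the energy. The only nuance the paper adds is that the technical underpinnings (e.g.\ the orthogonality/differentiability of envelopes) rest on intersection theory and vanishing theorems in birational geometry rather than PDE tools, but this is consistent with your outline.
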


Their strategy uses a \emph{variational method}. There is a concave \emph{energy functional} $\mathcal{E}$ on the space of continuous semipositive metrics on $L$ (equivalently viewed as continuous $\theta$-psh potentials $\phi$), whose first variation is given by the NA MA measure. One seeks a maximizer of the functional
\[
F_\mu(\phi)= \mathcal{E}(\phi)-(L^n)\int_{X_K^{an}} \phi d\mu,
\]
by first enlarging the space of $\phi$ to a function space $PSH(X_K^{an}, \theta)$ which is \emph{compact} modulo the addition of a real constant; this is analogous to the $L^1$-compactness of $PSH(X,\omega)/\R$ in the K\"ahler setting. The notions of the NA MA measure and the energy functional extend naturally to the \emph{energy class} functions inside $PSH(X_K^{an}, \theta)$, much like in the complex pluripotential theory setting. One then shows the maximizer is in fact a \emph{critical point}, namely a weak solution to the NA MA equation. This is subtle since small perturbations of functions in $PSH(X_K^{an}, \theta)$ may fall outside of the class by losing positivity. One proves the continuity of the weak solution using analogues of Kolodziej's estimates. The uniqueness of the solution again relies on the concavity of $\mathcal{E}$.

While this strategy shares a very similar logical structure with the complex analytic setting, the technical foundations are built upon intersection theory and vanishing theorems in birational geometry, instead of differential operators.

The main case of interest to us is when $X_K$ arises from a large complex structure limit. Then NA pluripotential theory provides a unique solution to
\begin{equation}\label{NAMACY}
MA( \norm{\cdot }_{CY})= (L^n) d\mu_0,
\end{equation}
where $d\mu_0$ is the Lebesgue measure supported on the essential skeleton $Sk(X)\subset X_K^{an}$ (\cf section \ref{volumeasymptoteessentialskeleton}). We call $\norm{\cdot}_{CY}$ the \emph{non-archimedean Calabi-Yau metric}.

\subsection{Comparison property}\label{Comparisonproperty}

Very little is proven about the non-archimedean CY metric $\norm{\cdot}_{CY}$ beyond existence and continuity. We now discuss the meaning of the following conjectural NA MA-real MA comparison property.

\begin{Def}
	We say $\norm{\cdot }_{CY}$ satisfies the \emph{NA MA-real MA comparison property}, if there exists a semistable snc model $(\mathcal{X}, \mathcal{L})$ of $(X,L)$ with the property that, the potential $\phi_0$ defined by $\norm{\cdot}_{ CY }= \norm{\cdot}_{ \mathcal{L}}e^{-\phi_0}$ satisfies $\phi_0=\phi_0\circ r_{\mathcal{X}  }$ on the preimages of the retraction map over all the $n$-dimensional open faces 
	$\text{Int}(\Delta_J)\subset Sk(X)$.

\end{Def}

Notice $\text{Int}(\Delta_J)\subset \Delta_{\mathcal{X} }$ inherits a natural integral affine structures.  
Since the restriction of $\phi_0$ is convex on these faces by Prop. \ref{convexityonfaces}, its real MA measure makes sense, and by Prop. \ref{NAMArealMAcomparisonProp} it satisfies the \emph{real MA equation} on $\text{Int}(\Delta_J)$
\begin{equation}\label{realMACY}
\text{MA}_\R(\phi_0)= \frac{ (L^n) }{ n!} d\mu_0.
\end{equation}

A few comments are in order:

\begin{itemize}
\item  The comparison property is a conjecture in algebraic/non-archimedean geometry, and does not \emph{a priori} involve PDE concepts. Its PDE implications come \emph{a posteriori}.

\item  The Kontsevich-Soibelman picture (\cf section \ref{KontsevichSoibelmanconj}) expects that there is a solution of the real MA equation on the essential skeleton, away from some singular locus. The NA-MA equation via the  comparison property is the only known systematic method to produce solutions.

\item In the context of \emph{toric invariant metrics} on toric varieties, there are comparison results between NA MA measure and real MA equation, \cf \cite[Prop. 4.4.4]{Gil}.

\item The reader may feel that NA pluripotential theory is a very long-winded way to solve the real MA equation. However, surprisingly enough, it is not even known how to formulate the real MA equation globally on $Sk(X)$ in general, not just on the $n$-dimensional faces but also on the lower dimensional faces.

One problem is that $Sk(X)$ does not come with an obvious preferred affine structure, but only a piecewise affine structure, so there is no obvious coordinate independent definition of the real MA measure. It seems that the affine structure conjectured by Kontsevich and Soibelman would need to be solved simultaneously with the real MA equation, rather like free boundary PDE problems.

 Another problem is that solving the real MA equation requires first specifying the class of convex functions to be admitted as potentials, just like solving the complex Monge-Amp\`ere equation requires first specifying the meaning of K\"ahler potentials. We do not currently know any direct way of defining the class of convex potentials on piecewise affine manifolds such as $Sk(X)$. The semipositive metrics on the Berkovich space $X_K^{an}$, abstract as it may be, is our only available substitute.

\item If one believes the NA Calabi-Yau metric $\norm{\cdot}_{CY}$ is the potential theoretic limit of the Calabi-Yau metrics on $X_t$ in the hybrid topology, and that the information of $\norm{\cdot}_{CY}$ can be recovered from data on $Sk(X)$, then one may be inclined to think that the potential of $\norm{\cdot}_{CY}$ factors \emph{globally} through some retraction map $X_K^{an}\to Sk(X)$, defined perhaps through some divisorial log terminal minimal model.

Such a statement would need to confront the difficulty that the divisorial log terminal model is not necessarily unique, and in principle the retraction map depends on the choice of the model. It seems highly nontrivial how the NA MA equation would select a preferred retraction map.

The formulation of the comparison property is more cautious than this. We allow the dual intersection complex $\Delta_{\mathcal{X}}$ to be strictly bigger than $Sk(X)$, and there is no assumption on the complement of the $n$-dimensional faces of $Sk(X)$. Regarding the problem above, if one is undecided between a finite number of candidate retraction maps, then one can pass to a common snc resolution (and perhaps pass to finite base change, to find a semistable snc resolution). Of course, the more we blow up the model $\mathcal{X}$, the weaker is the comparison property.

\begin{rmk}
The very recent work of Pille-Schneider and Mazzon \cite{Leonard} proposes gluing the retraction maps associated to several divisorial log terminal models to obtain a  map $X_K^{an}\to Sk(X)$. Their map still factors through the dual intersection complex of some larger snc model, hence is compatible with the comparison property.
\end{rmk}

\item Without the comparison property, it seems hard to give any differential geometric interpretation to $\norm{\cdot}_{CY}$ at all, since $X_K^{an}$ contains arbitrarily large dual intersection complexes, and thus is highly complicated. The heuristic intuition of this hypothetical scenario, is that the potential theoretic limit of the Calabi-Yau metrics would require \emph{infinitely many blow ups} to describe. This is not yet ruled out by a theorem; we leave the reader to judge its plausibility.

\end{itemize}

The open question for algebraic geometers is

\begin{Question}
Can the comparison property be proven for a sufficiently large class of examples, such as those from the Gross-Siebert program \cite{Gross}? 
\end{Question}

\section{Glimpse of proof strategy}

We discuss some recent progress on the weak metric version of the SYZ conjecture.

\begin{thm}\label{NAthm}
\cite{LiNA} Let $X\to S\setminus \{ 0\}$ be a large complex structure limit of Calabi-Yau manifolds, with the polarization ample line bundle $L\to X$. Assume the NA MA-real MA comparison property holds for $X$ (\cf section \ref{Comparisonproperty}). 
For sufficiently small $t\in S\setminus \{0\}$, there exists a special Lagrangian $T^n$-fibration with respect to the Calabi-Yau structure $(\omega_{CY,t}, \Omega_t)$ on an open subset of $X_t$ whose normalized Calabi-Yau measure tends to $100\%$ as $t\to 0$.
\end{thm}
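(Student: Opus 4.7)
The plan is to bridge the non-archimedean side with the complex side, and then bootstrap from $C^0$-closeness to fiber structures. Let me outline the steps I would take.

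First, I would invoke the NA Calabi conjecture (section \ref{NACalabi}) to obtain the non-archimedean Calabi-Yau metric $\norm{\cdot}_{CY}$ solving (\ref{NAMACY}). Using the comparison property, the associated potential $\phi_0$ is convex on each top-dimensional open face $\text{Int}(\Delta_J)\subset Sk(X)$ and solves the real Monge-Amp\`ere equation (\ref{realMACY}) there. By the Caffarelli-Mooney theory (section \ref{RegularitytheoryforrealMA}), $\phi_0$ is smooth on the complement of a closed singular set $\Sigma_J\subset \text{Int}(\Delta_J)$ of $(n-1)$-Hausdorff measure zero. I would restrict attention to a point $x_*\in \text{Int}(\Delta_J)\setminus \Sigma_J$; since the strictly convex locus is open and has full $d\mu_0$-measure, the ``generic region'' will ultimately be built by exhausting such $x_*$.

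Next, I would transport this information to $X_t$ for small $|t|$. Choose a semistable snc model $\mathcal{X}$ witnessing the comparison property and a model line bundle $\mathcal{L}$ for $L$. Approximate $\norm{\cdot}_{CY}$ by a Fubini-Study metric using section \ref{FubiniStudyapproximation}, obtaining a family of genuine Fubini-Study metrics $\omega_{FS,t}$ on $X_t$ that converge to $\norm{\cdot}_{CY}$ in the hybrid topology. Writing the Calabi-Yau potentials $\phi_{CY,t}$ with respect to these references, Theorem \ref{UniformLinfty} yields a uniform $L^\infty$ bound, and then the uniform $L^1$-stability estimate (Theorem \ref{UniformL1stabilitythm}) — comparing the pushforward measure $\text{Log}_{\mathcal{X}*}d\mu_t$ with the Lebesgue measure on $Sk(X)$ via the measure convergence (\ref{measureconvergence}) — would show that $\phi_{CY,t}$ is $C^0$-close (uniformly in $t$) to the approximation of $\phi_0\circ r_{\mathcal{X}}$. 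The key technical input here is that the Skoda-type hypothesis is available uniformly from Theorem \ref{UniformSkodathm}.

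The third step is local rescaling. Around $x_*$, work in the toric chart where $\text{Log}_t(z_1,\dots,z_n)=(\log|z_1|/\log|t|,\dots,\log|z_n|/\log|t|)$ is defined, and rescale the K\"ahler form to unit scale $|\log|t||\,\omega_{CY,t}$. In this chart the pulled-back semiflat metric $dd^c(\phi_0\circ \text{Log}_t)$ is a smooth Calabi-Yau model (the $T^n$-symmetry reduces the complex MA equation on the universal cover to $\det D^2\phi_0=\text{const}$, which holds by the comparison property). The $C^0$-closeness from Step 2 gives a small $C^0$ perturbation of this explicit semiflat solution on a large ball. Savin's small perturbation theorem (Theorem \ref{Savin}) then upgrades $C^0$-smallness to $C^{k,\gamma}$-smallness on a slightly smaller ball — this is precisely the type of fully nonlinear elliptic perturbation where Savin's theorem is designed to sidestep the global $C^{1,\bar 1}$ barrier in Yau's proof.

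Finally, having $C^\infty$-closeness of $|\log|t||\,\omega_{CY,t}$ to the semiflat model on a large toric chart, I would apply Zhang's implicit function argument (section \ref{SpecialLagrangiansurvey}) with the standard $T^n$ fibers of $\text{Log}_t$ as the unperturbed special Lagrangians: the phase-adjusted deformation operator has surjective linearization by Hodge theory on $T^n$, so for each nearby value of the base coordinate there is a unique small perturbation that is special Lagrangian for $(\omega_{CY,t},\Omega_t)$, and these perturbations assemble into a local $T^n$-fibration. Taking the union of such toric charts as $x_*$ ranges over the strictly convex smooth locus of $\phi_0$ inside each top face of $Sk(X)$, and using the measure convergence (\ref{measureconvergence}) together with the fact that the singular set of $\phi_0$ has zero $d\mu_0$-measure, I would conclude that the normalized Calabi-Yau measure of the special Lagrangian locus tends to $100\%$. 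The main obstacle I anticipate is Step 3: the comparison property only gives information on the top-dimensional faces, and making the local rescaling truly ``standard'' (so that Savin's theorem applies with dimension-free smallness) requires carefully handling both the real MA singular set $\Sigma_J$ (which Mooney's examples show can be large) and the neighborhoods of lower-dimensional faces of $Sk(X)$ where the semiflat picture degenerates — these must be shown to have asymptotically negligible Calabi-Yau measure.
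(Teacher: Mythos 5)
Your overall route is the paper's Strategy I: NA Calabi metric plus the comparison property to get a real MA solution $\phi_0$ on the top faces, real MA regularity plus Savin's theorem to upgrade $C^0$-closeness to $C^\infty$-closeness of the semiflat model, and Zhang's implicit function argument to produce the fibration, with the measure count coming from (\ref{measureconvergence}). Steps 1, 3 and 4 of your outline are essentially Prop.~\ref{C0toCinfty} and the discussion following it.

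The genuine gap is in your Step 2, which is the technical heart of the argument. Theorem~\ref{UniformL1stabilitythm} compares two Monge--Amp\`ere measures of global potentials \emph{on $X_t$}; it is not applied by ``comparing $\text{Log}_{\mathcal{X}*}d\mu_t$ with the Lebesgue measure on $Sk(X)$''. To use it you need a genuine K\"ahler metric $\omega_{\psi,t}$ on $X_t$ satisfying two things simultaneously: its local potentials are $C^0$-close to $\phi_0\circ\text{Log}_{\mathcal{X}}$ on the generic region, \emph{and} its volume measure is close to $d\mu_t$ in total variation. The Fubini--Study approximation of $\norm{\cdot}_{CY}$ only gives the first property: the degrees of the projective embeddings are enormous, so there is no control whatsoever on $\omega_{FS,t}^n$, and $C^0$-closeness of potentials does not give a quantitative total variation bound on MA measures. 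This is exactly what the grafting/regularization step (Prop.~\ref{regularisationlemma}) supplies: one modifies the Fubini--Study potential in the generic region to agree with $\phi_0\circ\text{Log}_{\mathcal{X}}$ up to $C^2$-small error (using the real MA regularity of $\phi_0$), leaves the non-generic region alone, and uses its negligible measure to get the total variation bound. Moreover, even after this, the Skoda hypothesis (\ref{Skodaassumption}) is only available for $d\mu_t$ (Theorem~\ref{UniformSkodathm}), not for $\omega_{\psi,t}^n$, so the stability estimate is intrinsically one-sided: it yields only that $\phi_{CY,J,t}-\phi_0\circ\text{Log}_{\mathcal{X}}$ concentrates near its minimum outside a set of small $d\mu_t$-measure. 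Your claim of uniform two-sided $C^0$-closeness does not follow directly; the paper upgrades the concentration statement to a genuine $C^0$ bound on a slightly shrunk $U_{J,t}$ by a mean value inequality argument for the relative potential (Prop.~\ref{C0convergence}). Without the regularized extension $\omega_{\psi,t}$ and this concentration-plus-mean-value step, the passage from the NA data to the $C^0$ hypothesis of Prop.~\ref{C0toCinfty} is not established.
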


There is a very particular family of projective Calabi-Yau hypersurfaces, for which the NA MA-real MA comparison property can be bypassed, at the cost of passing to subsequences:
\begin{equation}
X_t= \{  Z_0Z_1\ldots Z_{n+1}+ t \sum_{i=0}^{n+1} Z_i^{n+2}=0     \}\subset \mathbb{CP}^{n+1}, \quad t\in \R, 0<t\ll 1.
\end{equation}
We call this the \emph{Fermat family}, on account of the famous Fermat polynomial $\sum_{i=0}^{n+1} Z_i^{n+2}$.

\begin{thm}\cite{LiFermat}\label{Fermatthm}
For the Fermat family, consider the Calabi-Yau metrics $\omega_{CY,t}$ on $X_t$ in the polarisation class $\frac{1}{|\log |t||} c_1( \mathcal{O}(1))$. Then for a subsequence of $X_t$ as $t\to 0$, there exists a special Lagrangian $T^n$-fibration on an open subset of $X_t$, whose normalized Calabi-Yau measure tends to $100\%$.	
\end{thm}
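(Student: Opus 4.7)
The plan is to bypass the NA MA-real MA comparison property by exploiting the large symmetry group of the Fermat family: the permutation of coordinates together with the action by $(n+2)$-th roots of unity on $\mathbb{CP}^{n+1}$. The essential skeleton $Sk(X)$ is the boundary of the standard $(n+1)$-simplex, topologically $S^n$, and the symmetry acts transitively on the top-dimensional faces. The idea is to produce, by a compactness argument rather than by invoking uniqueness of a solution to the NA Monge-Amp\`ere equation, a candidate convex potential $\phi_0$ on the interior of each top-dimensional face, playing the role that the comparison property would have supplied.

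First I would fix a symmetric semistable snc model $\mathcal{X}$ of the Fermat family, work in the toric chart around a top-dimensional stratum $E_J$, and consider the normalised CY potential $\phi_{CY,t}$ relative to a symmetric Fubini-Study reference $\omega_{FS,t}$. By Theorem~\ref{UniformLinfty}, $\norm{\phi_{CY,t}}_{C^0}$ is bounded uniformly in $t$. Averaging over the $T^n$-fibres of the local logarithm map $\text{Log}_{\mathcal{X}}$ produces a family of uniformly bounded convex functions on $\text{Int}(\Delta_J)$; a standard convex compactness argument extracts a $C^0_{loc}$-convergent subsequence with limit $\phi_0$, and the Fermat symmetry guarantees compatibility across faces. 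Combined with the measure convergence (\ref{measureconvergence}) and the complex MA equation satisfied by $\phi_{CY,t}$, one expects $\phi_0$ to satisfy the real Monge-Amp\`ere equation $\det(D^2 \phi_0) = \text{const}$ on $\text{Int}(\Delta_J)$.

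Next I would form the semiflat ansatz $\omega_{sf,t} := \omega_{FS,t} + dd^c(\phi_0 \circ \text{Log}_{\mathcal{X}})$ on the generic region. The crucial input is the uniform $L^1$-stability estimate (Theorem~\ref{UniformL1stabilitythm}): since the pushforward of the complex MA measure of $\omega_{sf,t}$ matches the Lebesgue measure limit of $d\mu_t$ on $\text{Int}(\Delta_J)$, the two measures have small total variation on a generic region, forcing $\phi_{CY,t}$ and $\phi_0 \circ \text{Log}_{\mathcal{X}}$ to be $C^0$-close up to an additive constant. Thanks to the regularity theory for real Monge-Amp\`ere (Section~\ref{RegularitytheoryforrealMA}), $\phi_0$ is smooth away from a singular set whose $(n-1)$-Hausdorff measure vanishes, so $\omega_{sf,t}$ is a genuine smooth semiflat CY model on a large open subset of $X_t$.

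Finally, Savin's small perturbation theorem (Theorem~\ref{Savin}), applied at the correct rescaled scale where the semiflat model looks Euclidean, promotes $C^0$-smallness to $C^{k,\gamma}$-smallness. Zhang's implicit function theorem argument (Section~\ref{SpecialLagrangiansurvey}) then deforms the standard special Lagrangian $T^n$-fibres of the flat model into genuine special Lagrangian tori for $(\omega_{CY,t}, \Omega_t)$. The CY measure of the complement --- collar regions near $\partial \Delta_J$, the Caffarelli-Mooney singular locus of $\phi_0$, and the near-boundary scales where Savin's estimate fails --- vanishes as $t \to 0$ by the Skoda-type estimate (Theorem~\ref{UniformSkodathm}) and the measure bound (\ref{measureconvergence}). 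The main obstacle is the construction of $\phi_0$ and the verification of its real MA equation purely from compactness and symmetry: without uniqueness, different subsequences could a priori yield different $\phi_0$, and the Fermat symmetry must be used carefully to ensure both that the limit solves the right equation on each top face and that the various faces glue consistently.
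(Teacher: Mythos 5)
Your route is essentially the paper's own ``a priori limit'' strategy for the Fermat family (section \ref{StrategyII}): fibrewise $T^n$-averaging of the uniformly bounded CY potentials (Theorem \ref{UniformLinfty}), Arzela--Ascoli, a stability-type comparison, then Savin and Zhang. But there is a genuine gap at the comparison step. Theorem \ref{UniformL1stabilitythm} is a \emph{global} statement on the compact manifold $X_t$: it requires a background K\"ahler form defined on all of $X_t$, a relative potential that is globally psh and globally $C^0$-bounded, and smallness of the total variation $\int_{X_t}|d\mu-d\nu|$ over the whole manifold. Your semiflat ansatz $\omega_{sf,t}=\omega_{FS,t}+dd^c(\phi_0\circ \text{Log}_{\mathcal{X}})$ is only defined over the generic region $\text{Log}_{\mathcal{X}}^{-1}(\text{Int}(\Delta_J))$, and there is no reason for $\phi_0\circ \text{Log}_{\mathcal{X}}$ to extend to a globally $\omega_{FS,t}$-psh function; ``small total variation on a generic region'' is not a hypothesis the theorem accepts, and without global positivity the pluripotential argument collapses (cf.\ Remark \ref{globalpositivityKolodziej}). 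This is exactly the \emph{extension problem} the paper isolates: one must build a genuine K\"ahler metric $\omega_{\psi,t}$ on all of $X_t$, with potential bounded relative to a Fubini--Study metric uniformly in $t$, whose local potentials on $U_{J,t}$ agree with the limiting convex function up to $C^0$-small error. For the Fermat family this is solved in an ad hoc and highly technical way --- extending to a \emph{toric} K\"ahler metric on the ambient $\mathbb{CP}^{n+1}$ and restricting to $X_t$ (so positivity is automatic), using the tropical hypersurface combinatorics, the discrete Fermat symmetry, and a double Legendre transform --- and it constitutes the most technical part of \cite{LiFermat}. Your invocation of the Fermat symmetry only to glue limits across faces does not substitute for this construction.

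A second, related problem is the order of your argument. You want $\phi_0$ to solve the real Monge--Amp\`ere equation \emph{before} the comparison, ``purely from compactness and symmetry,'' and you correctly flag this as your main obstacle: averaging does not commute with the complex Monge--Amp\`ere operator, so the equation cannot be read off from the averaged potentials $\bar{\phi}_{CY,J,t}$ alone, and your application of Theorem \ref{UniformL1stabilitythm} (matching the MA measure of $\omega_{sf,t}$ with $d\mu_t$) presupposes it. In the paper the logic runs the other way: the comparison does not use any equation for $\bar{\phi}_{J,0}$, but rather the one-sided bound $\phi-\bar{\phi}\leq C|\log|t||^{-1/2}$, the improved Skoda inequality controlling the exceptional set, and the global extension $\omega_{\psi,t}$ fed into Theorem \ref{UniformL1stabilitythm}; only after the resulting $C^0$-convergence $\norm{\phi_{CY,J,t}-\bar{\phi}_{J,0}\circ\text{Log}_{\mathcal{X}}}_{C^0}\to 0$ is established does one pass the complex MA equation to the limit (weak continuity under $C^0$-convergence, with care because the manifolds vary) to conclude that $\bar{\phi}_{J,0}$ is an Aleksandrov solution of the real MA equation, after which the Savin/Zhang part of your outline goes through as you describe. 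So the two missing ingredients --- the global psh extension and the a posteriori derivation of the real MA equation --- are precisely the content that makes the Fermat case work without the comparison property.
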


Our exposition will focus on the main line of thought and its many subtleties, but not the full details of the proofs.

\subsection{Reduction to potential estimates}\label{Reductiontopotentialestimate}

The common part of the strategy is to reduce the existence question of special Lagrangians to $C^0$-estimate on the potential.

Given a fixed snc model $\mathcal{X}\to S$, there is a logarithm map $\text{Log}_{\mathcal{X}}: X_t\to \Delta_{\mathcal{X}}$ defined up to $O(\frac{1}{|\log |t||})$ coordinate ambiguity. Given an $n$-dimensional face $\Delta_J$ of $Sk(X)\subset \Delta_{\mathcal{X}}$, we consider the preimage $U_{J,t}\subset X_t$ under the logarithm map, of a slightly shrinked version of the interior of $\Delta_J$.  We will take the liberty of shrinking $U_{J,t}$ several times, as long as the deleted sets have negligible Calabi-Yau measure in the $t\to 0$ limit. Since $U_{J,t}$ can be regarded as a torus invariant subset of $(\C^*)^n$, we can make sense of $C^k_{loc}$ norms uniformly in $t$, by passing to the universal cover with the coordinates $\frac{\log z_i}{\log |t|}$.

The first main step is to improve $C^0$-estimate to $C^\infty$-estimate.

\begin{prop}\label{C0toCinfty}
(\cf \cite[section 4.5]{LiNA})  Let $\phi_0$ be an Alexandrov solution of the real MA equation (\ref{realMACY}) on the interior of $\Delta_J$. Suppose  the Calabi-Yau metrics on $U_{J,t}$ admit local potential functions $\phi_{CY, J,t}$ such that $\omega_{CY,t}=dd^c \phi_{CY, J,t}$, and $\norm{\phi_{CY,J,t}- \phi_0\circ \text{Log}_{\mathcal{X}}}_{C^0} \to 0$ as $t\to 0$. Then after slightly shrinking $U_{J,t}$, we have the $C^\infty$-asymptote
$
\norm{\phi_{CY,J,t}- \phi_0\circ \text{Log}_{\mathcal{X}}}_{C^k_{loc}} \to 0.
$

\end{prop}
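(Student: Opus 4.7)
The plan is to apply Savin's small perturbation theorem (Theorem \ref{Savin}) on the universal cover of a shrunk version of $U_{J,t}$, in the rescaled logarithm coordinates $w_i = \frac{\log z_i}{\log |t|}$, with $\phi_0 \circ \text{Log}_{\mathcal{X}}$ serving as the smooth model solution. First, by the Caffarelli--Mooney regularity theory recalled in section \ref{RegularitytheoryforrealMA}, the Alexandrov solution $\phi_0$ of the real MA equation (\ref{realMACY}) is $C^\infty$ and strictly convex on the complement $\text{Int}(\Delta_J) \setminus \Sigma$, where $\Sigma$ has vanishing $(n-1)$-Hausdorff measure. I would therefore fix a relatively compact open $\Omega' \subset \text{Int}(\Delta_J) \setminus \Sigma_\delta$ missing a $\delta$-neighbourhood of $\Sigma$, and shrink $U_{J,t}$ to $\text{Log}_{\mathcal{X}}^{-1}(\Omega')$. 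Since $d\mu_0(\Sigma) = 0$ and the pushforward of the CY measure converges to $d\mu_0$ on $\text{Int}(\Delta_J)$ by (\ref{measureconvergence}), the mass lost can be made arbitrarily small, uniformly in $t$.

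In the rescaled $w$-coordinates on the universal cover, the pullback $\phi_0 \circ \text{Log}_{\mathcal{X}}$ becomes the $T^n$-invariant semiflat potential of section 2.1, smooth on the preimage of $\Omega'$, and the same Hessian calculation there gives $(dd^c (\phi_0 \circ \text{Log}_{\mathcal{X}}))^n = c \prod \sqrt{-1}\, dw_i \wedge d\bar{w}_i$ for a constant $c > 0$ fixed by the real MA equation (\ref{realMACY}). On the other hand, the analysis of the CY volume form in section \ref{volumeasymptoteessentialskeleton} shows
\[
\sqrt{-1}^{n^2}\, \Omega_t \wedge \overline{\Omega}_t = |u_J|^2 \prod_{i=1}^n \sqrt{-1}\, dw_i \wedge d\bar{w}_i
\]
in the same rescaled coordinates, with $u_J$ smoothly approaching the Poincar\'e residue $u_J(E_J)$ on any relatively compact piece of the preimage of $\Omega'$; the fact that this residue is constant on the top stratum is what makes $c$ match the CY normalization. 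Hence the complex MA equations satisfied by $\phi_{CY,J,t}$ and by $\phi_0 \circ \text{Log}_{\mathcal{X}}$ agree up to a factor $1 + f_t$ with $\norm{f_t}_{C^{k-2,\gamma}_{\mathrm{loc}}} \to 0$ for every $k$.

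Combining the $C^0$-smallness hypothesis $\norm{\phi_{CY,J,t} - \phi_0 \circ \text{Log}_{\mathcal{X}}}_{C^0} \to 0$ with the right-hand-side smallness of $f_t$, I apply Savin's theorem on unit $w$-balls contained in the universal cover of $\text{Log}_{\mathcal{X}}^{-1}(\Omega')$. This yields interior estimates $\norm{\phi_{CY,J,t} - \phi_0 \circ \text{Log}_{\mathcal{X}}}_{C^{k,\gamma}(B_{1/2})} \leq C_k \epsilon_t \to 0$ for every $k$, and a standard covering argument, absorbing the factor $1/2$ into a further slight shrinking, delivers the claimed $C^k_{\mathrm{loc}}$ convergence. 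The main obstacle I anticipate is the uniform-in-$t$ verification that $f_t \to 0$ in $C^{k-2,\gamma}_{\mathrm{loc}}$: one must track both the $b_J$-fold multiplicity of the cover relation $t = \prod z_i^{b_i}$ and the derivatives of the nonvanishing holomorphic factor $u_J$, ensuring all subleading corrections decay with all derivatives on any piece of the cover sitting above $\Omega'$. The shrinking away from $\Sigma$ is what supplies the uniform $C^{k,\gamma}$ bounds on the model $\phi_0$, without which the constants $C_k$ appearing in Savin's theorem would degenerate and the estimate would be vacuous.
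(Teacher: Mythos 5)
Your proposal is correct and follows essentially the same route as the paper's own sketch: delete the singular set of the Alexandrov solution using the Caffarelli--Mooney regularity theory to get local $C^k$ bounds on $\phi_0$, observe via the volume-form asymptote that both potentials solve complex Monge-Amp\`ere equations whose right-hand sides differ by a quantity vanishing in all $C^k$ norms, and then upgrade the assumed $C^0$ closeness to $C^\infty_{loc}$ closeness by Savin's small perturbation theorem on the local universal cover after a slight shrinking. The only additions you make --- tracking the negligible measure loss near $\Sigma$ and the multiplicity of the cover --- are consistent with, and implicit in, the paper's argument.
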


\begin{proof}
	(Sketch)
\begin{itemize}
\item    The first ingredient is that by the regularity theory of real MA equation (\cf section \ref{RegularitytheoryforrealMA}), after deleting a subset of $\text{Int}(\Delta_J)$ of Hausdorff $(n-1)$-measure zero, then $\phi_0$ is smooth. After a slight shrinking of the remaining open set, then $\phi_0$ has $C^k$ bounds.

\item   The second ingredient is Savin's small perturbation theorem (\cf section \ref{Savin}). After passing to the local universal cover, both $\phi_{CY,J,t}$ and $\phi_0\circ \text{Log}_{\mathcal{X}}$ solve a complex Monge-Amp\`ere equation. The difference in their RHS vanishes in the $t\to 0$ limit in arbitrarily high $C^k$ norm, as a consequence of the volume form asymptote in section \ref{volumeasymptoteessentialskeleton}. Savin's result then improves the $C^0$ closeness of $\phi_{CY,J,t}$ and $\phi_0\circ \text{Log}_{\mathcal{X}}$ to $C^\infty_{loc}$ closeness, after small shrinking of $U_{J,t}$.

\end{itemize}
\end{proof}

Prop. \ref{C0toCinfty} implies the  \textbf{semiflat metric asymptote} on the slightly  shrinked $U_{J,t}$, with $C^\infty$ small error in the $t\to 0$ limit:
\begin{equation}
\omega_{CY,t}\sim  dd^c (\phi_0\circ \text{Log}_{\mathcal{X}})=\frac{\sqrt{-1}}{4\pi |\log |t||^2}\sum_{i,j}\frac{\partial^2 \phi_0}{\partial x_i\partial x_j} d\log z_i\wedge d\overline{\log z_j}.
\end{equation}
In terms of the Riemannian metric tensors,
\begin{equation}\label{CYmetricasymptote}
g_{CY,t}\sim \frac{1}{2\pi|\log |t||^2} \text{Re}\{  \sum_{1\leq i,j\leq n} \frac{\partial^2 \phi_0}{\partial x_i\partial x_j}  d\log z_i\otimes d\log \bar{z}_j \}.
\end{equation}
In particular the Riemannian curvature stays uniformly bounded in $U_{J,t}$. By the perturbation theory of special Lagrangians reviewed in section \ref{SpecialLagrangiansurvey}, the $T^n$ fibres of the logarithm map can be made into a \textbf{special Lagrangian fibration} by a $C^\infty$ small perturbation, on a slightly shrinked subset.

Suppose the assumption of Prop. \ref{C0toCinfty} holds on all the $n$-dimensional faces of $Sk(X)$, then the union of all $U_{J,t}$ cover almost all the CY measure on $X_t$, and the measure lost in the domain shrinking process is negligible. The weak metric version of the SYZ conjecture then follows.

\begin{rmk}
A subtlety  is that the \emph{local regularity} theory of real Monge-Amp\`ere equation allows for Hausdorff codimension $1+\epsilon$ singularities. This means the codimension two singularity prediction in the Kontsevich-Soibelman conjecture cannot follow simply from the above argument. One must find a more \emph{global} argument on $Sk(X)$, not just on the interior of its $n$-dimensional faces. 
\end{rmk}

%\begin{rmk}
%Near the $(n-1)$-dimensional faces of $Sk(X)$, as long as one can find the local $(\C^*)^n$ charts on $X_t$, and specify the affine coordinates and the logarithm maps, an analogue of Prop. \ref{C0toCinfty} would run almost verbatim.

%\end{rmk}

%\begin{rmk}
%The special Lagrangian fibration is an intrinsic property of the Calabi-Yau structure. The proof says that this fibration is $C^\infty$ close to the logarithm map $\text{Log}_{\mathcal{X}}$ over some regular locus in the open $n$-dimensional faces. Now the logarithm map depends on the choice of the model $\mathcal{X}$, so perhaps the Calabi-Yau metric favours some particular class of models. Notice the condition of Prop. \ref{C0toCinfty} is preserved under further blow ups of models.
%\end{rmk}

\subsection{Strategy I: non-archimedean geometry}\label{StrategyI}

The remaining task is to achieve the local $C^0$-convergence of local potentials to a solution of the real MA equation on the open $n$-dimensional faces of $Sk(X)$ (\cf Prop. \ref{C0toCinfty}). The first strategy \cite{LiNA} is:

\begin{itemize}
\item Solve the real MA equation on $Sk(X)$, independent of the CY metrics on $X_t$.

\item Then attempt to compare the solution with the potential of the CY metrics on $X_t$.
First, one needs to produce a K\"ahler metric on $X_t$ whose local potential is $C^0$-close to the real MA solution on $Sk(X)$ in some topology. Then one needs some version of the $L^1$-volume stability estimate (\cf section \ref{Toolsfrompsh}) to show the $C^0$-smallness of the relative potential between this K\"ahler metric and the CY metric, at least in the generic region.
\end{itemize}

\subsubsection{Motivation for NA geometry}

The above strategy contains many problems:

\begin{itemize}
\item  As discussed in section \ref{Comparisonproperty}, it is unknown how to directly formulate the real MA equation on $Sk(X)$, nor do we know the precise class of convex functions needed for such formulations.

\item The essential skeleton is a simplicial complex, and $X_t$ is a complex manifold. These are conceptually very different objects, and we need a topology to unify both sides.

\item  Pluripotential theoretic arguments require the \emph{global positivity} (\ie psh property) of K\"ahler potentials (\cf Remark \ref{globalpositivityKolodziej}). Thus when we graft the real MA solution from $Sk(X)$ to $X_t$, we must guarantee the global positivity. The difficulty lies in the non-generic regions where the complex structure on $X_t$ is highly singular.

%\item A more technical issue is that the K\"ahler metric on $X_t$ produced by grafting
%the real MA solution, we have too little control on its volume density in the non-generic region. 

\end{itemize}

These problems point naturally towards NA geometry:
\begin{itemize}
\item The NA MA-real MA comparison property is a natural way to produce solutions.

\item The hybrid topology is a natural topology to compare $X_t$ with $X_K^{an}$, which contains the essential skeleton.

\item The notion of semipositive metric is built into NA geometry.
\end{itemize}

\begin{rmk}
	A byproduct of the non-archimedean approach, is that the limit of Calabi-Yau local potentials is in fact independent of subsequence, since the non-archimedean analogue of the Calabi-Yau metric is known to be unique.
\end{rmk}

\subsubsection{Grafting the real MA solution}

Let $(\mathcal{X}, \mathcal{L})$ be a semistable snc model with $\mathcal{L}|_X=L$. The NA pluripotential theory provides a continuous semipositive metric $\norm{\cdot}_{CY}=\norm{\cdot}_{\mathcal{L} }e^{-\phi_0}$ on $L$ over $X_K^{an}$ solving the NA MA equation (\ref{NAMACY}), which we assume henceforth satisfies the NA MA-real MA comparison property, so $\phi_0$ solves the real MA equation over the $n$-dimensional open faces $\text{Int}(\Delta_J)$ of the essential skeleton $Sk(\mathcal{X} )$ (\cf section \ref{NACalabi}).

\begin{prop}\label{regularisationlemma}\cite[Lemma 4.1, 4.2]{LiNA}
Given any $\epsilon\ll 1$, and let $t$ be small enough depending on $\epsilon$. There is a K\"ahler metric $\omega_{\psi,t}$, such that 
\begin{itemize}
\item On $\text{Log}_{ \mathcal{X} }^{-1}(\text{Int}(\Delta_J))$, the local K\"ahler potentials 
$\phi_{J,t}$ of $\omega_{\psi,t}$ can be chosen to  satisfy $|\phi_{J,t}-  \phi_0\circ \text{Log}_{ \mathcal{X}}|<\epsilon$.

\item 
The total variation $ \int_{X_t} | \frac{ |\log |t||^n \omega_{\psi,t}^n }{ (L^n) }  -d\mu_t|  <\epsilon.$

\item The K\"ahler potential of $\omega_{\psi,t}$ relative to a fixed  Fubini-Study reference metric, is uniformly bounded independent of $t, \epsilon$.  
\end{itemize}
\end{prop}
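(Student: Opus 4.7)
My plan is to construct $\omega_{\psi,t}$ by pulling a non-archimedean Fubini–Study approximation of $\norm{\cdot}_{CY}$ back to $X_t$ via the hybrid topology, and then verifying each of the three bullets in turn. The first and third are essentially formal consequences of the approximation and the uniformity of the construction; the second bullet, concerning the total variation of volume forms, is where almost all of the work lies.

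First I would invoke the Chen–Moriwaki Fubini–Study approximation theorem (section \ref{FubiniStudyapproximation}, Semipositivity II) applied to the ample line bundle $L$ and the continuous semipositive metric $\norm{\cdot}_{CY} = \norm{\cdot}_{\mathcal{L}} e^{-\phi_0}$: for some $m \gg 1$ depending on $\epsilon$, there is a $K$-basis $s_0,\ldots,s_N$ of $H^0(X_K, mL)$, chosen as finite Laurent polynomials in $t$, whose induced NA Fubini–Study metric $\norm{\cdot}_{FS}$ satisfies $\sup_{X_K^{an}} |\phi_0 - \phi_{FS}| < \epsilon/10$. Then by formula (\ref{FubiniStudyXt}) the same data produce a complex Fubini–Study metric $\norm{\cdot}_{FS,t}$ on $(X_t, L)$ for small $|t|$, and I set $\omega_{\psi,t}$ to be its curvature rescaled into the class $\frac{1}{|\log|t||}c_1(L)$. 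The hybrid convergence $\norm{\cdot}_{FS,t}^{1/|\log|t||} \to \norm{\cdot}_{FS}$ then yields, for $t$ small enough, $\norm{\phi_{J,t} - \phi_{FS}\circ \mathrm{Log}_{\mathcal{X}}}_{C^0(U_{J,t})} < \epsilon/10$ on preimages of any slightly shrunken copy of $\mathrm{Int}(\Delta_J)$. Combined with Step~1, this gives the first bullet. The third bullet is immediate: two NA Fubini–Study metrics built from finite bases of $H^0(X_K, mL)$ differ by a bounded NA potential, which translates uniformly in $t$ into a bounded potential relative to any fixed Fubini–Study reference on $X$.

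For the second bullet, I would decompose $X_t$ into the generic piece $\bigcup_J U_{J,t}$ (over the $n$-dimensional faces of $Sk(X)$) and the complement $W_t$. By the volume asymptote (\ref{measureconvergence}), $\int_{W_t} d\mu_t = O(1/|\log|t||)$, and the same bound holds for $\int_{W_t} |\log|t||^n \omega_{\psi,t}^n / (L^n)$ once one chooses $W_t$ to be the pre-image of a shrinking neighbourhood of the lower-dimensional skeleton (the FS volume form is uniformly controlled in the hybrid sense). On the generic piece, the regularity theory of the real Monge–Amp\`ere equation (section \ref{RegularitytheoryforrealMA}) tells us that $\phi_0$ is smooth off a Hausdorff $(n-1)$-null singular set $\Sigma_J \subset \mathrm{Int}(\Delta_J)$ and satisfies $\det D^2 \phi_0 = \mathrm{const}$ off $\Sigma_J$. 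By inflating $\Sigma_J$ slightly to a set of small Lebesgue measure and staying on its complement, one has $C^k$ control of $\phi_0$, and hence, using Prop.~\ref{C0toCinfty}/Savin (section \ref{Savin}) applied to the complex MA equation satisfied by the semiflat model $\phi_0 \circ \mathrm{Log}_{\mathcal{X}}$ and its Fubini–Study perturbation $\phi_{J,t}$, one upgrades the $C^0$ closeness to $C^\infty_{\mathrm{loc}}$ closeness after a further tiny shrinking. On this set $\omega_{\psi,t}^n$ is then uniformly $C^\infty$-close to $(dd^c \phi_0 \circ \mathrm{Log}_{\mathcal{X}})^n$, and by the semiflat–to–real-MA conversion (section \ref{Largecomplexstructurelimit}) the latter matches $(L^n)/|\log|t||^n \cdot d\mu_t$ pointwise up to an error that vanishes with $t$. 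Summing the controlled contribution on the generic part with the small total measure of the bad part gives the total variation bound.

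The main obstacle I anticipate is the second bullet. Steps~1–2 only give $C^0$ information about the potential, and a priori $C^0$ closeness of potentials does not control any derivative of the volume form. The way out is a three-layer argument combining: the real MA regularity theory to locate a large smooth subset of $\mathrm{Int}(\Delta_J)$; Savin's small perturbation theorem to leverage that smoothness into full $C^\infty_{\mathrm{loc}}$ closeness of $\omega_{\psi,t}$ to the semiflat model; and a quantitative cutoff argument showing that all of $W_t$, the singular set $\Sigma_J$, and the overlap regions between adjacent faces together carry negligible CY measure and negligible $\omega_{\psi,t}^n$-mass. A subsidiary technical point is that Savin's theorem needs a smooth background solution with uniform $C^{k,\gamma}$ bounds, which must be arranged by restricting to a slightly shrunken subset on which $\phi_0$'s Pogorelov/Caffarelli-type singular set has been removed; this removal is what forces the proposition to tolerate a shrinking of $U_{J,t}$ with measure going to zero.
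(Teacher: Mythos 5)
Your first and third bullets are handled essentially as in the paper: take a NA Fubini--Study approximation of $\norm{\cdot}_{CY}$ via Chen--Moriwaki, realize it as honest Fubini--Study metrics on $X_t$ through (\ref{FubiniStudyXt}), and use hybrid convergence to get $C^0$ closeness of local potentials to $\phi_0\circ \text{Log}_{\mathcal{X}}$, with the relative potential to a fixed reference bounded uniformly. Up to that point you are on track.

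The gap is in your treatment of the second bullet, and it is exactly the obstacle the paper is organized around. You propose to take $\omega_{\psi,t}$ to be the Fubini--Study metric itself and then upgrade the $C^0$ closeness of its potential to $C^\infty_{loc}$ closeness via Savin's theorem. But Theorem \ref{Savin} requires that the perturbed potential solve a complex Monge--Amp\`ere equation whose right-hand side is $C^{k-2,\gamma}$-small relative to the smooth background solution; this hypothesis is available for the Calabi--Yau potential (which is why Savin enters in Prop. \ref{C0toCinfty}) but is precisely what is missing for a Fubini--Study potential. The volume form of $\omega_{FS,t}$ is not controlled at all: the projective embeddings have gigantic degree, and $C^0$ closeness of potentials gives no pointwise or H\"older control on the volume density, so there is no small $f$ to feed into Savin. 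Your assertion that ``the FS volume form is uniformly controlled in the hybrid sense'' is false --- hybrid convergence is a statement about potentials/metrics at the $C^0$ level only. The same problem undercuts your cutoff argument: without volume control on the generic region you cannot conclude that $W_t$ and the bad sets carry negligible $\omega_{\psi,t}^n$-mass, since a priori the FS volume could concentrate there. The paper's proof avoids this by \emph{not} taking $\omega_{\psi,t}$ to be the FS metric: it performs a further regularization, modifying the FS metric in the generic region so that its potential agrees with (a smoothing of) $\phi_0\circ\text{Log}_{\mathcal{X}}$ up to $C^2$-small error --- here the real MA regularity theory is used, and the volume density control is then true \emph{by construction}, not by a PDE perturbation theorem --- while leaving the FS metric untouched in the non-generic region; positivity of the glued metric is part of the construction, and since the modified generic region already accounts for almost all of the fixed total mass $\int_{X_t}\omega_{\psi,t}^n$, the non-generic region automatically has negligible $\omega_{\psi,t}^n$-measure, giving the total variation bound. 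To repair your argument you would need to add this gluing/modification step (and verify the resulting form stays K\"ahler), rather than applying Savin to the unmodified Fubini--Study metric.
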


\begin{proof}
(Sketch)
\begin{itemize}
	\item We first $C^0$ approximate the NA metric $\norm{\cdot}_{CY}$ by some NA \textbf{Fubini-Study metric}, which arises naturally as a hybrid topology limit of usual Fubini-Study metrics on $X_t$ (\cf section \ref{FubiniStudyapproximation}). The Fubini-Study metrics are positive, and by construction their local potentials differ from $ \phi_0\circ \text{Log}_{ \mathcal{X}}$ by an arbitrarily small amount in the $C^0$ sense.

	\item We do not have direct control on the volume form of the Fubini-Study metrics; the degrees of the associated projective embeddings are gigantic. In contrast, the volume form of the local potential $\phi_0\circ \text{Log}_{ \mathcal{X}}$ has negligible difference from $\frac{(L^n)}{|\log |t||^n}d\mu_t$, by the volume asymptote in section \ref{volumeasymptoteessentialskeleton} and the real MA equation (\ref{realMACY}).

	\item  The idea is to perform a further \textbf{regularization}. We modify the Fubini-Study metric in the generic region of $X_t$, so that it essentially agrees with $\phi_0\circ \text{Log}_{ \mathcal{X}}$ in the generic region up to $C^2$-small error. In this step we appealed also to the regularity theory of real MA equation. The end result is $\omega_{\psi,t}$, which is K\"ahler by construction.

	\item   In the non-generic region, we do not perform regularization. Since the generic region already takes up $99.9\%$ of the $\omega_{\psi,t}^n$ measure for $|t|\ll 1$, the non-generic region has negligible total measure. We use this to argue for the total variation bound.	
\end{itemize}
\end{proof}

\subsubsection{$C^0$-convergence of the potential}

It remains to show
\begin{prop}\label{C0convergence}
Up to slightly shrinking the domains, the Calabi-Yau metrics on  $U_{J,t}$ admit local potential functions $\phi_{CY, J,t}$ such that $\omega_{CY,t}=dd^c \phi_{CY, J,t}$, and $\norm{\phi_{CY,J,t}- \phi_0\circ \text{Log}_{\mathcal{X}}}_{C^0} \to 0$ as $t\to 0$. 
\end{prop}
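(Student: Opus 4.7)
The plan is to combine the global $L^1$-volume stability estimate (Theorem \ref{UniformL1stabilitythm}) with the local approximation on $U_{J,t}$ provided by Prop.~\ref{regularisationlemma}(i). Fix a Fubini-Study reference $\omega_{FS,t}$ and write $\omega_{CY,t}=\omega_{FS,t}+dd^c u_{CY,t}$ and $\omega_{\psi,t}=\omega_{FS,t}+ dd^c u_{\psi,t}$. By Theorem \ref{UniformLinfty} and Prop.~\ref{regularisationlemma}(iii), both $u_{CY,t}$ and $u_{\psi,t}$ have uniform $L^\infty$-bounds independent of $t$. The relative potential $\varphi:=u_{CY,t}-u_{\psi,t}$ is $\omega_{\psi,t}$-psh, since $\omega_{CY,t}=\omega_{\psi,t}+dd^c \varphi\geq 0$.

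\medskip

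I would then apply Theorem~\ref{UniformL1stabilitythm} with $\omega=\omega_{\psi,t}$ and potential $\varphi$, so that $d\mu=\omega_{\psi,t}^n/\mathrm{Vol}$ and $d\nu=d\mu_t$. The total variation hypothesis is exactly Prop.~\ref{regularisationlemma}(ii). The required Skoda-type estimate for $d\mu$ relative to $\omega_{\psi,t}$ is deduced from the uniform Skoda estimate Theorem~\ref{UniformSkodathm} in two steps: first, the uniform $L^\infty$ bound on $u_{\psi,t}$ transfers the reference metric from $\omega_{FS,t}$ to $\omega_{\psi,t}$ at the cost of a multiplicative constant; second, the closeness of $\omega_{\psi,t}^n/\mathrm{Vol}$ to $d\mu_t$ in total variation, combined with the $L^\infty$ bound, transfers the underlying measure. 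After normalizing $\varphi$ by an additive constant so as to arrange the mass lower bound on $\{\varphi\leq 0\}$ (which is feasible since the two measures agree to within $\epsilon$), the theorem delivers $\sup_{X_t}\varphi\leq C\epsilon^{1/(2n+3)}$. Exchanging the roles of $\omega_{\psi,t}$ and $\omega_{CY,t}$ — this time using the Skoda estimate directly for $d\mu_t$ with reference $\omega_{CY,t}$, transferred from Fubini-Study via the uniform $L^\infty$ bound on $u_{CY,t}$ — yields the matching lower bound, so $\|\varphi\|_{C^0}\leq C\epsilon^{1/(2n+3)}\to 0$.

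\medskip

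Once the global bound on $\varphi$ is in hand, the conclusion on $U_{J,t}$ is immediate. I define the local potential $\phi_{CY,J,t}:=\phi_{J,t}+\varphi$ on $U_{J,t}$, where $\phi_{J,t}$ is the local potential of $\omega_{\psi,t}$ supplied by Prop.~\ref{regularisationlemma}(i). Then $dd^c \phi_{CY,J,t}=\omega_{\psi,t}+dd^c\varphi=\omega_{CY,t}$, and the triangle inequality gives
\[
|\phi_{CY,J,t}-\phi_0\circ \text{Log}_{\mathcal{X}}|\;\leq\;|\varphi|+|\phi_{J,t}-\phi_0\circ \text{Log}_{\mathcal{X}}|\;\leq\; C\epsilon^{1/(2n+3)}+\epsilon
\]
on $U_{J,t}$. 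Since $\epsilon$ is at our disposal once $t$ is small enough, letting $t\to 0$ yields the desired $C^0$-convergence.

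\medskip

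The principal obstacle will be verifying the Skoda-type integrability for the grafted measure $\omega_{\psi,t}^n/\mathrm{Vol}$ with respect to the $\omega_{\psi,t}$-psh class, uniformly in $t$. The transfer from $d\mu_t$ (where Skoda is available by Theorem~\ref{UniformSkodathm}) is delicate because the integrand $e^{-\alpha u}$ is highly sensitive to pointwise behaviour, while we only control the difference of the two measures in total variation; the uniform $L^\infty$ bound on $u_{\psi,t}$ from Prop.~\ref{regularisationlemma}(iii) is precisely what keeps this transfer under control. A related subtlety is the mass normalization of $\varphi$ needed to invoke the one-sided stability estimate twice and thereby obtain a genuinely two-sided $C^0$-bound, rather than only controlling $\sup\varphi$.
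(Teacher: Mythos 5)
There is a genuine gap, and it sits exactly where you flagged ``the principal obstacle'': the Skoda-type estimate for the grafted measure $\omega_{\psi,t}^n/\mathrm{Vol}$ cannot be obtained by the transfer you propose, and without it your second, role-reversed application of Theorem \ref{UniformL1stabilitythm} collapses. The first half of your transfer (changing the reference from $\omega_{FS,t}$ to $\omega_{\psi,t}$ using the uniform $L^\infty$ bound on $u_{\psi,t}$) is fine, but the second half is not: writing $\omega_{\psi,t}^n/\mathrm{Vol}=d\mu_t+(\text{error})$, the error is controlled only in total variation, while the integrand $e^{-\alpha u}$ is unbounded and can be enormous precisely on the small-mass set where the error measure concentrates. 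A uniform $L^\infty$ bound on the potential $u_{\psi,t}$ gives no pointwise control whatsoever on the Monge--Amp\`ere density $\omega_{\psi,t}^n$ (second derivatives are not controlled by $C^0$ bounds), and indeed the construction in Prop.~\ref{regularisationlemma} gives essentially no information about this density in the non-generic region, where the regularization is not performed. This is exactly why the paper states that an estimate like (\ref{Skodaassumption}) for $\omega_{\psi,t}$ is not available, why the roles of $\omega_{\psi,t}$ and $\omega_{CY,t}$ are genuinely asymmetric, and why Theorem \ref{UniformL1stabilitythm} was formulated as a \emph{one-sided} stability statement in the first place. So your symmetric scheme proves only one global inequality (the application with reference $\omega_{CY,t}$ and $d\mu=d\mu_t$, where Skoda does transfer via Theorem \ref{UniformSkodathm} and Theorem \ref{UniformLinfty}); the matching bound cannot be obtained globally by the same device.

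The paper's actual route accepts this asymmetry: the single admissible application of Theorem \ref{UniformL1stabilitythm} yields that $\phi_{CY,J,t}-\phi_0\circ\mathrm{Log}_{\mathcal{X}}$, normalized so its minimum is zero, stays below a small threshold except on a set of arbitrarily small Calabi-Yau measure, i.e.\ one obtains a pointwise bound on one side and only a measure-theoretic bound on the other. The missing ingredient in your write-up is the upgrade from ``small except on a set of small $d\mu_t$-measure'' to a genuine $C^0$ bound: the paper does this by a (somewhat tricky) mean value inequality argument for the relevant subharmonic quantity on the collapsed toric charts, at the cost of slightly shrinking $U_{J,t}$ --- which is also where the ``up to slightly shrinking the domains'' in the statement comes from. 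Your final triangle-inequality step on $U_{J,t}$ is fine as far as it goes, but it presupposes the global two-sided bound on $\varphi$ that the argument cannot deliver; replacing it requires this local mean-value upgrade rather than a second application of the stability theorem.
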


Prop. \ref{regularisationlemma} says that  the local potential of $\omega_{\psi,t}$ and $\phi_0\circ \text{Log}_{ \mathcal{X}}$ differ negligibly in the $t\to 0$ limit in the $C^0$-sense. Ideally, one would like to use some version of $L^1$-volume stability to conclude the $C^0$-smallness of the relative potential between $\omega_{\psi,t}$ and $\omega_{CY,t}$. Unfortunately, due to the difficulty of regularization in the non-generic region, there is very little control on the volume density of $\omega_{\psi,t}$ in the non-generic region, and we cannot conclude a Skoda type estimate like (\ref{Skodaassumption}) for $\omega_{\psi,t}$. This technical problem causes an \emph{asymmetry} between $\omega_{\psi,t}$ and $\omega_{CY,t}$, and only `one half' of the $L^1$-volume stability estimate (\cf section \ref{Toolsfrompsh}) applies, which is why we designed Theorem \ref{UniformL1stabilitythm}.

After the dust settles, Theorem \ref{UniformL1stabilitythm} implies that $\phi_{CY,J,t}- \phi_0\circ \text{Log}_{\mathcal{X}}$ \emph{concentrates near its minimum value (normalized to be zero) on a subset with almost $100\%$ of the Calabi-Yau measure} (\cf \cite[Prop 4.4, Cor. 4.6]{LiNA}). More precisely, for any given small number $\kappa,\lambda\ll 1$, then for sufficiently small $t$, the measure
\begin{equation}
d\mu_t(   \phi_{CY,J,t}- \phi_0\circ \text{Log}_{\mathcal{X}} \geq \kappa/4  )<\lambda.
\end{equation}
On a slightly shrinked version of $U_{J,t}$, this can be improved to the $C^0$-control
\[
0\leq \phi_{CY,J,t}- \phi_0\circ \text{Log}_{\mathcal{X}} <\kappa,
\]
by a slightly tricky application of the mean value inequality (\cf \cite[Thm. 4.7]{LiNA}).
Since $\kappa$ is arbitrary, this achieves  Prop. \ref{C0convergence}, which verifies the hypothesis of Prop. \ref{C0convergence}, whence the weak metric version of the SYZ conjecture.

\begin{rmk}
The $C^0$ convergence statement only applies to the generic region. We do not know the answer to

\begin{Question}
Do the potentials of the CY metrics on $X_t$ converge to the NA CY metric $\norm{\cdot}_{CY}$ on $X_K^{an}$ globally in the hybrid topology?
\end{Question}
\end{rmk}

\subsection{Strategy II: a priori limit}\label{StrategyII}

The second strategy does not appeal to NA geometry, and is independent of section \ref{StrategyI}.

\begin{itemize}
\item  Argue a priori that the local potential functions of the Calabi-Yau metrics on $U_{J,t}\subset X_t$ converge subsequentially to some convex function on the open $n$-dimensional faces of $Sk(X)$, in the $C^0$-norm.

\item Argue that the convex function satisfies the real MA equation.

\end{itemize}

The strategy is general, except for a delicate problem which we only solved in the very special case for the Fermat family (\cf Theorem \ref{Fermatthm} \cite{LiFermat}).

\subsubsection{Producing convex functions}

Recall the logarithm map $\text{Log}_t: (\C^*)^n\to \R^n$.
\[
\text{Log}_t (z_1,\ldots z_n)=\frac{1}{\log |t|} (\log |z_1|,\ldots \log |z_n|).
\]
Consider an open convex subset $U\subset \R^n$, and let $\phi$ be a psh function on $\text{Log}_t^{-1}(U)\subset (\C^*)^n$.

\begin{lem}\label{pshconvexity}\cite[Lemma 4.3]{LiFermat}
The fibrewise $T^n$ average function \[
	\bar{\phi}(x_1,\ldots x_n)= \frac{1}{(2\pi)^n} \int_{T^n} \phi( e^{x_1\log |t|+ i\theta_1}, \ldots e^{x_n\log |t|+ i\theta_n}) d\theta_1\ldots d\theta_n
	\]
	is a convex function in the variables $x_1,\ldots x_n$. 
\end{lem}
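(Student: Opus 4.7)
My plan is to identify the averaged function as a torus-invariant plurisubharmonic function, and then invoke the classical correspondence between torus-invariant psh functions and convex functions in log-coordinates.

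First, I would observe that the set $\text{Log}_t^{-1}(U) \subset (\C^*)^n$ is $T^n$-invariant under the natural action $(e^{i\theta_1},\ldots,e^{i\theta_n})\cdot(z_1,\ldots,z_n) = (e^{i\theta_1}z_1,\ldots,e^{i\theta_n}z_n)$, since $|e^{i\theta_j}z_j| = |z_j|$. Next, for each fixed $\theta = (\theta_1,\ldots,\theta_n)$, the translated function $\phi_\theta(z) := \phi(e^{i\theta_1}z_1,\ldots,e^{i\theta_n}z_n)$ is the pullback of $\phi$ by a biholomorphism of $\text{Log}_t^{-1}(U)$, hence is psh. Therefore the average
\begin{equation*}
\Phi(z_1,\ldots,z_n) := \frac{1}{(2\pi)^n}\int_{T^n} \phi(e^{i\theta_1}z_1,\ldots,e^{i\theta_n}z_n)\, d\theta_1\cdots d\theta_n
\end{equation*}
is psh on $\text{Log}_t^{-1}(U)$, by a standard application of Fubini together with the sub-mean-value characterization of psh functions (integrability is fine since psh functions are locally in $L^1$). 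By construction $\Phi$ is $T^n$-invariant, so it factors as $\Phi(z) = \Psi(\log|z_1|,\ldots,\log|z_n|)$ for some function $\Psi$ defined on $(\log|t|)\cdot U \subset \R^n$.

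The second step is the classical fact that $\Psi$ is convex. Working in the local holomorphic coordinates $w_j = \log z_j$ on the universal cover, we have $\log|z_j| = \text{Re}(w_j)$, and $\Phi$ pulled back becomes $\Psi(\text{Re}(w_1),\ldots,\text{Re}(w_n))$. A direct computation gives
\begin{equation*}
\frac{\partial^2}{\partial w_j \partial \bar w_k}\Psi(\text{Re}\, w) = \tfrac{1}{4}\frac{\partial^2 \Psi}{\partial y_j \partial y_k}(\text{Re}\, w),
\end{equation*}
so plurisubharmonicity of $\Phi$ on the cover is equivalent to positive semidefiniteness of the real Hessian of $\Psi$, i.e., convexity of $\Psi$.

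Finally, the function in the lemma is $\bar\phi(x_1,\ldots,x_n) = \Psi(x_1\log|t|,\ldots,x_n\log|t|)$, which is the composition of the convex function $\Psi$ with a linear map (multiplication by the constant scalar $\log|t|$). Convexity is preserved under arbitrary linear reparametrizations (the sign of $\log|t|$ is immaterial), so $\bar\phi$ is convex in $(x_1,\ldots,x_n)$, as claimed.

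There is no real obstacle here; the only points requiring a minimum of care are the $L^1_{\text{loc}}$ integrability of $\phi$ needed to legitimize the $T^n$-average, and tracking that $\log|t|<0$ does not affect convexity since it only rescales (and reflects) the domain linearly.
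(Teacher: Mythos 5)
Your proposal is correct and follows essentially the same route as the paper's own proof: averaging over the $T^n$-orbits preserves plurisubharmonicity, and a $T^n$-invariant psh function on $\text{Log}_t^{-1}(U)$ corresponds to a convex function in the logarithmic coordinates, with the factor $\log|t|$ being an immaterial linear rescaling. You have simply spelled out the standard details (Fubini, the Hessian computation in $w_j=\log z_j$) that the paper leaves implicit.
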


\begin{proof}
Since the function $\bar{\phi}$ is an average of psh functions, it is psh as a $T^n$-invariant function on $\text{Log}_t^{-1}(U)$. Such functions correspond to convex functions downstairs.
\end{proof}

An important intuition is that \emph{on sufficiently collapsed toric regions inside $(\C^*)^n$, bounded K\"ahler potentials have a strong tendency to be approximated by convex functions}.

\begin{prop}
Assume $\norm{\phi}_{C^0}$ has a uniform bound independent of $t$. Then after shrinking $U$ by a small amount independent of $t$, we have
\begin{itemize}
\item The convex function $\bar{\phi}$ has a Lipschitz bound $|\bar{\phi}(x)-\bar{\phi}(x')|\leq C|x-x'|.$

\item There is an upper bound $\phi-\bar{\phi}\leq  \frac{C}{ |\log |t||^{1/2} }$.

\item  On each logarithmic dyadic scale $U_{a}=\{ a_i\leq \log |z_i| \leq 2a_i, \forall i\}\subset U$, the $L^1$-integral
\[
\int_{U_{a}} |\phi-\bar{\phi}|\prod \sqrt{-1}d\log z_i\wedge d\overline{\log z_i} \leq \frac{C}{ |\log |t||^{1/2} }.
\]

\item  There is an \emph{improved Skoda inequality} with uniform constants $\alpha,C$ independent of $t$:
\[
\int_{U} e^{-\alpha |\log |t||^{1/2} (\phi-\bar{\phi})   } d\mu_t \leq C.
\]

\end{itemize}

\end{prop}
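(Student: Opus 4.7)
The plan is to establish the four items in order, with the pointwise upper bound (Item 2) being the central estimate from which Items 3 and 4 then follow. Item 1 is a direct consequence of Lemma \ref{pshconvexity} combined with standard convex analysis: since $\bar\phi$ is convex on $U$ and bounded by $\|\phi\|_{C^0}$ (as an average of $\phi$), it is locally Lipschitz on any $U'\subset\subset U$ with Lipschitz constant controlled by $\|\phi\|_{C^0}/\text{dist}(U',\partial U)$, so a slight shrinking of $U$ suffices.

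For Item 2, I would apply the plurisubharmonic sub-mean value inequality at a fixed point $p_0 = (|t|^{x_i^0}e^{i\theta_i^0})_i$ over a polydisc $P_\rho = \prod_i D_{\rho|z_i^0|}(z_i^0)$, where $\rho \in (0,1)$ is an auxiliary optimization parameter. Writing $\phi = \bar\phi\circ \text{Log}_t + \psi$, the sub-mean value inequality $\phi(p_0) \leq \text{avg}_{P_\rho}\phi$ splits as
\[
\phi(p_0) - \bar\phi(x^0) \leq \bigl(\text{avg}_{P_\rho}(\bar\phi\circ \text{Log}_t) - \bar\phi(x^0)\bigr) + \text{avg}_{P_\rho}\psi.
\]
The first piece is controlled by Item 1: the range of $\text{Log}_t$ over $P_\rho$ has $x$-extent of size $O(\rho/|\log|t||)$, so this term is at most $CL\rho/|\log|t||$ with $L$ the Lipschitz constant of $\bar\phi$. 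The second piece is controlled by combining the mean-zero property of $\psi$ on each $T^n$-fiber with a Caccioppoli $L^2$ gradient estimate for bounded psh functions and the Poincar\'e inequality on tori, giving a bound decreasing in $\rho$. Balancing the two error scales forces $\rho$ of order $|\log|t||^{-1/2}$ and yields the stated pointwise bound. The underlying intuition is that ``singular features'' of $\phi$ are confined to a boundary layer of $\text{Log}_t^{-1}(U)$; shrinking $U$ moves us into the interior where the $T^n$-averaging effect takes over.

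Items 3 and 4 then follow quickly. For Item 3, the mean-zero property $\int_{T^n}\psi\,d\theta = 0$ on each fiber together with the pointwise upper bound $\psi \leq C/|\log|t||^{1/2}$ gives $\int_{T^n}|\psi|d\theta \leq 2(2\pi)^n C/|\log|t||^{1/2}$, and integrating over the fixed-volume dyadic cell $U_a$ with measure $\prod \sqrt{-1}d\log z_i\wedge d\overline{\log z_i}$ finishes the $L^1$-bound. For Item 4, one splits the exponential according to the sign of $\psi$: where $\psi \geq 0$, Item 2 gives $e^{-\alpha|\log|t||^{1/2}\psi} \leq 1$; where $\psi < 0$, one applies the uniform Skoda estimate of Theorem \ref{UniformSkodathm} to the upper-semicontinuous function $\bar\phi\circ \text{Log}_t - \phi$ (which, up to a bounded $T^n$-invariant shift, is dominated by a global psh function on $X_t$) with weight amplified by $|\log|t||^{1/2}$.

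The main obstacle is Item 2. The $|\log|t||^{-1/2}$ scaling is much sharper than what a raw application of plurisubharmonic sub-mean value over Euclidean balls would give; it emerges from the subtle interplay between the Lipschitz scale $\sim 1/|\log|t||$ of the convex approximation $\bar\phi\circ \text{Log}_t$ and the Caccioppoli--Poincar\'e control of the mean-zero residual $\psi$. Extracting the correct $\rho$-dependence in the Caccioppoli estimate, and choosing the appropriate test region (polydisc or $T^n$-invariant Reinhardt shell), is where most of the technical work lies.
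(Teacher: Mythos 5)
Items 1 and 3 of your plan are fine and coincide with the paper's own sketch (bounded convex functions are Lipschitz after shrinking; the fibrewise mean--zero property converts the one-sided bound of Item 2 into an $L^1$ bound). The genuine gap is in Item 2, which is the heart of the proposition. Your polydisc $P_\rho$ with $\rho\in(0,1)$ has logarithmic radius $O(1)$, so it never wraps around the $T^n$ fibres, and an average over an angular sector of width $O(\rho)$ cannot exploit the fibrewise mean--zero property of $\psi=\phi-\bar\phi\circ\text{Log}_t$ in any quantitative way: since $\phi$ is only bounded psh there is no pointwise gradient bound, and Caccioppoli gives only $\int|\nabla\phi|^2=O(1)$ on unit cells in the $\log z$ coordinates, so the discrepancy between the sector average and the full fibre average of $\psi$ is $O(1)$ (and the bound one can actually prove degrades like $\rho^{-n/2}$ as $\rho\to0$), never $o(1)$. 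Your bookkeeping is also internally inconsistent: with $\rho\sim|\log|t||^{-1/2}$ the Lipschitz term is $O(|\log|t||^{-3/2})$, so no balancing reproduces the claimed rate. The mechanism in the paper (\cite[section 4.3]{LiFermat}) is the opposite regime: apply the sub-mean value inequality for $\phi$ over a Euclidean ball in the coordinates $w=\log z$ of \emph{large} radius $R$, with $2\pi\ll R\leq \delta|\log|t||$ --- permissible exactly because $U$ has been shrunk, so the point lies at distance $\gtrsim\delta|\log|t||$ from the boundary of the slab. Such a ball wraps each circle fibre about $R/2\pi$ times, so its average of $\phi$ agrees with a weighted average of $\bar\phi$ up to a boundary-layer error $O(\norm{\phi}_{C^0}/R)$, while the Lipschitz bound of Item 1, which in the $w$ coordinates has constant $O(\norm{\phi}_{C^0}/|\log|t||)$, contributes $O(R/|\log|t||)$; optimizing $R\sim|\log|t||^{1/2}$ gives precisely $\phi-\bar\phi\leq C|\log|t||^{-1/2}$.

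Item 4 is also not correct as proposed. The exponent $\alpha$ in Theorem~\ref{UniformSkodathm} is a fixed constant: you cannot invoke it ``with weight amplified by $|\log|t||^{1/2}$'', since a Skoda inequality with exponent $\alpha|\log|t||^{1/2}$ for general normalized psh functions is false, and proving it for the specific function $\phi-\bar\phi$ is exactly the content of the improved inequality. Moreover $\bar\phi\circ\text{Log}_t-\phi$ is a difference of psh functions, not psh, and domination from above by a global psh function gives nothing towards the integrability of $e^{-\alpha|\log|t||^{1/2}(\phi-\bar\phi)}$. The proof must route through Item 3, which your Item 4 never uses: on each logarithmic dyadic cell (subdivided, using Item 1, so that the oscillation of $\bar\phi$ on the cell is $O(|\log|t||^{-1/2})$) replace $\bar\phi$ by a constant $c$; then $|\log|t||^{1/2}(\phi-c)$ is psh with $L^1$ norm $O(1)$ on the cell by Item 3, so the local Skoda inequality, Theorem~\ref{Skodabasicversion}, with its fixed dimensional $\alpha$, applies after rescaling the cell to a standard ball and yields a uniform cell-wise bound; one then sums these bounds over the dyadic cells against $d\mu_t$. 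This localization--plus--summation is what produces uniform constants $\alpha, C$ independent of $t$.
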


\begin{proof}
(Sketch)
\begin{itemize}
\item
Bounded convex functions automatically have Lipschitz bound on slightly shrinked convex domains. 

\item The second item follows from a slightly tricky application of mean value inequality for subharmonic functions, \cf \cite[section 4.3]{LiFermat}.

\item The third item is because the function $\phi-\bar{\phi}$ has mean value zero, so an upper bound implies an $L^1$-bound, \cf  \cite[section 4.3]{LiFermat}.

\item  One first apply the basic Skoda estimate Thm. \ref{Skodabasicversion} to the function $\phi$ on each logarithmic dyadic scale, where $\bar{\phi}$ is almost constant by the Lipschitz bound. Then we sum over all the logarithmic dyadic scales (\cf \cite[section 4.6]{LiFermat}).

\end{itemize}
\end{proof}

\begin{rmk}\label{improvedSkoda}
The improved Skoda estimate is one of the main discoveries in \cite{LiFermat}. Intuitively, this means $\phi-\bar{\phi}$ can only be significantly \emph{below} $- \frac{Const}{|\log |t||^{1/2}}$ on sets with exponentially small measure. Compounded with the upper bound $\phi-\bar{\phi}\leq  \frac{C}{ |\log |t||^{1/2} }$, this means for sufficiently small $t$, an arbitrary bounded psh function $\phi$ is very close to the convex function $\bar{\phi}$ except on exponentially small measure.	
\end{rmk}

In our applications, the psh functions $\phi$ arise from the local potentials $\phi_{CY,J, t}$ of the Calabi-Yau metrics $\omega_{CY,t}$ on toric charts inside $X_t$. Since $\omega_{CY,t}$ has uniformly bounded potential with respect to Fubini-Study reference metrics (\cf Thm. \ref{UniformLinfty}), it is easy to arrange the local potentials $\phi_{CY,J,t}$ on toric charts to be uniformly bounded, whence the convex functions $\bar{\phi}_{CY,J,t}$ are also uniformly bounded. Using the Lipschitz bound, by Arzela-Ascoli, we can extract a collection of subsequential limits as $t\to 0$. By construction $\bar{\phi}_{CY,J,t}\to \bar{\phi}_{J,0}$ in the $C^0_{loc}$ sense on the interior of the $n$-dimensional faces of $Sk(X)$.

\subsubsection{$C^0$-convergence of the potential and extension problem}

We aim to show $\norm{\phi_{CY,J,t}- \bar{\phi}_{J,0}\circ \text{Log}_\mathcal{X}}_{C^0}$ on the slightly shrinked $U_{J,t}$ converges to zero along the subsequence.
We know $\norm{\bar{\phi}_{CY,t} -\bar{\phi}_{J,0} }_{C^0}\to 0$ along the subsequence, and from Remark \ref{improvedSkoda}, we know $|\phi_{CY,J,t}- \bar{\phi}_{CY,J,t}\circ \text{Log}_\mathcal{X}|$ is small \emph{except on a subset with small measure}. 
Removing this small measure problem, is however rather subtle, and requires a global argument.

The strategy is:
\begin{itemize}
\item (`\textbf{Extension problem}') Find a \emph{global K\"ahler metric} $\omega_{\psi,t}$ on $X_t$ whose local potentials on $U_{J,t}$ agree with $\bar{\phi}_{J,0}\circ \text{Log}_{\mathcal{X}}$ up to $C^0$ small error, and whose potential with respect to a fixed Fubini-Study metric is bounded independent of $t$.

\item  (\textbf{Potential stability estimate}) We can then consider the potential $\phi_{CY,rel}$ of the Calabi-Yau metric $\omega_{CY,t}$ relative to $\omega_{\psi,t}$. A small upper bound for $\phi_{CY,rel}$ on  $U_{J,t}$ follows from $\phi_{CY,J,t}- \bar{\phi}_{CY,J,t}\circ \text{Log}_\mathcal{X} \leq \frac{C}{|\log |t||^{1/2}}$. We also know a small lower bound on the $\phi_{CY,rel}$ holds except on a set with very small measure, and then an application of Theorem \ref{UniformL1stabilitythm} concludes a small lower bound on $\inf \phi_{CY,rel}$. We emphasize that the \emph{global positivity} of K\"ahler metrics is essential for this argument.

The net conclusion is that $\phi_{CY,rel}$ is $C^0$-small on a slightly shrinked version of $U_{J,t}$. This amounts to the smallness of $\norm{\phi_{CY,J,t}- \bar{\phi}_{J,0}\circ \text{Log}_\mathcal{X}}_{C^0}$, which is our goal.

\end{itemize}

The extension problem is about patching local potentials to global K\"ahler potentials, and the difficulty is to achieve psh property in the non-generic region. The core problem, which is not satisfactorily solved in general, is 

\begin{Question}
Can we sufficiently explicitly characterize the class of convex potentials on $Sk(X)$ that can be regarded as limits of K\"ahler potentials on $X_t$?
\end{Question}

The extension problem is solved in an ad hoc way for the Fermat family, and constitutes the most technical part of \cite{LiFermat}.\footnote{Technically, the paper \cite{LiFermat} does not use the language of dual complexes and essential skeletons, but proceed via explicit charts controlled by tropical geometry.} Recall the Fermat family embeds into an ambient projective space $\mathbb{CP}^{n+1}$. Our strategy is to produce the extension $\omega_{\psi,t}$ as a \emph{toric K\"ahler metric} on $\mathbb{CP}^{n+1}$, and then restrict to $X_t$, which guarantees the global positivity. Ensuring that $\omega_{\psi,t}$ agrees with the local convex functions up to $C^0$-small error is a delicate matter, that involves the explicit tropical hypersurface combinatorics, exploits the large amount of discrete symmetry of the Fermat family, and uses a double Legendre transform construction \cite{LiFermat}.

\begin{rmk}
The motivation for toric K\"ahler metrics on $\mathbb{CP}^{n+1}$ is as follows. The toric property is a natural way to reduce general K\"ahler potentials to convex functions. The idea of extension to an ambient space, is based on 
\begin{prop}\label{extensionKahlercurrent}
	(\cite[Thm. B]{Coman}) Let $(X,\omega)$ be a projective manifold with a K\"ahler form representing an integral class, and $Y$ be a smooth subvariety of $X$. Then any $\phi \in PSH(Y,\omega|_Y)$ extends to $\phi\in PSH(X,\omega)$.
\end{prop}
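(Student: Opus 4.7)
The approach I would take is the classical Ohsawa–Takegoshi / Demailly approximation strategy, following the line of argument that became standard after the work of Coman–Guedj–Zeriahi. The integrality assumption is essential: write $[\omega] = c_1(L)$ for an ample line bundle $L$ on $X$, endow $L$ with a smooth Hermitian metric $h_0$ of curvature $\omega$, and translate the data into singular Hermitian metrics on line bundles. A function $\phi \in PSH(Y, \omega|_Y)$ corresponds to a (possibly very singular) Hermitian metric $h_0|_Y \cdot e^{-\phi}$ on $L|_Y$ with semipositive curvature current, and extending $\phi$ to $PSH(X,\omega)$ is equivalent to extending this singular metric to a semipositive singular Hermitian metric on $L$ over all of $X$.

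The first step is Demailly's regularization on $Y$: for each $k \gg 1$, form the Bergman-type approximation
\[
\phi_k \;=\; \frac{1}{2k}\log \sum_{j} |s_{k,j}|^2_{h_0^k},
\]
where $\{s_{k,j}\}$ is an $L^2(Y, h_0^k e^{-2k\phi})$-orthonormal basis of $H^0(Y, L^k|_Y) \cap L^2(e^{-2k\phi})$. Standard Bergman kernel estimates (sub-mean value inequalities in local coordinates plus the fact that the family is $L^2$-orthonormal with respect to $e^{-2k\phi}$) give $\phi_k \to \phi$ in $L^1_{loc}(Y)$, pointwise from above after regularization, and $\phi_k \leq \phi + O(k^{-1}\log k)$. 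Next I apply the Ohsawa–Takegoshi $L^2$ extension theorem to each section $s_{k,j}$: since $Y \subset X$ is smooth and $L$ is ample (so that $K_X + L$ is sufficiently positive after twisting by a fixed auxiliary ample line bundle, absorbed into a uniform constant), there exist global sections $\tilde s_{k,j} \in H^0(X, L^k)$ with $\tilde s_{k,j}|_Y = s_{k,j}$ and
\[
\int_X |\tilde s_{k,j}|^2_{h_0^k}\, dV_X \;\leq\; C \int_Y |s_{k,j}|^2_{h_0^k}\, dV_Y,
\]
where $C$ is independent of $k$ and $j$. Form the global $\omega$-psh functions
\[
\Phi_k \;=\; \frac{1}{2k}\log \sum_j |\tilde s_{k,j}|^2_{h_0^k}
\]
on $X$; these lie in $PSH(X,\omega)$ since the weights $h_0^k e^{-2k\Phi_k}$ define semipositive singular Hermitian metrics on $L^k$. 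The desired extension is then
\[
\Phi \;:=\; \bigl(\limsup_{k\to\infty} \Phi_k\bigr)^{*},
\]
the upper semicontinuous regularization, after subtracting a uniform constant coming from the Ohsawa–Takegoshi estimate and a $k \mapsto \infty$ renormalization.

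The main obstacle is showing $\Phi|_Y = \phi$ almost everywhere, and in particular that we have \emph{not} lost mass upon extension; this is where the uniform constant $C$ in Ohsawa–Takegoshi is decisive. The upper bound $\Phi|_Y \leq \phi$ follows formally from $\tilde s_{k,j}|_Y = s_{k,j}$, combined with the $L^2$ comparison: each $\tilde s_{k,j}$ contributes at most a controlled amount on $Y$, so $\Phi_k|_Y \leq \phi_k + O(k^{-1}\log C) \leq \phi + o(1)$. The lower bound $\Phi|_Y \geq \phi$ is more delicate; one uses that the global $L^2$ bound on $\tilde s_{k,j}$ forces the restriction to $Y$ to still form a near-orthonormal family up to a bounded factor, so that a sub-mean value argument applied to $\Phi_k$ in a tubular neighborhood of $Y$ gives $\Phi_k \geq \phi_k - O(k^{-1}\log k)$ on $Y$. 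Once both inequalities are established, passing to the regularized $\limsup$ yields $\Phi \in PSH(X,\omega)$ with $\Phi|_Y = \phi$, completing the extension. A small subtlety is that if $\phi$ has $-\infty$ poles, one has to truncate by $\max(\phi, -N)$, extend each truncation, and then pass to the monotone limit using the fact that decreasing limits of $\omega$-psh functions remain $\omega$-psh; this handles arbitrary $\phi \in PSH(Y,\omega|_Y)$ rather than just bounded ones.
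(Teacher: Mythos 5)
The survey does not actually prove this proposition; it is imported from Coman--Guedj--Zeriahi, and your Bergman-kernel plus $L^2$-extension strategy is indeed in the spirit of that circle of ideas. But as written your argument has a genuine gap at its central point: the claim that $\Phi:=(\limsup_k\Phi_k)^*$ restricts to $\phi$ on $Y$. Since $\tilde s_{k,j}|_Y=s_{k,j}$, you in fact have the exact identity $\Phi_k|_Y=\phi_k$, so the inequality you call delicate, $\Phi|_Y\ge\phi$, is immediate, and your ``near-orthonormal family'' sub-mean-value argument is not needed (nor does it prove anything as stated: sub-mean-value inequalities give upper, not lower, bounds). The genuinely delicate inequality is the opposite one, $\Phi|_Y\le\phi$, and your argument for it only controls $\Phi_k$ \emph{on} $Y$. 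That is not enough, because the upper semicontinuous regularization is taken in $X$: $\Phi(y)=\limsup_{X\ni x\to y}\limsup_k\Phi_k(x)$, and the only bound you have off $Y$ is the global one $\Phi_k\le O(k^{-1}\log k)$ coming from the $L^2$ bound over $X$. Nothing in the construction prevents the extended sections from having unit size at points arbitrarily close to, but off, $Y$, even near points where $\phi$ is very negative; in that case the regularization jumps up and $\Phi|_Y$ exceeds $\phi$. Obtaining decay of the extensions \emph{transverse} to $Y$ (equivalently, an upper bound for $\Phi_k$ in a neighborhood of $Y$ in terms of $\phi$) is exactly the hard point of the theorem, and in the literature it requires an extra mechanism -- growth control of the extension, gluing with quasi-psh functions with logarithmic poles along $Y$, or a reduction to extension of Lelong-class psh functions from the affine cone under a projective embedding -- none of which appears in your sketch.

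Two further points. First, the final reduction of the unbounded case is unjustified: extending each truncation $\max(\phi,-N)$ separately gives extensions with no reason to be monotone in $N$, so ``pass to the monotone limit'' does not parse, and the natural fixes (regularized suprema over $M\ge N$) reintroduce precisely the restriction problem above; in fact truncation is unnecessary, since Demailly approximation applies directly to unbounded $\phi$ with the basis taken orthonormal in $L^2(e^{-2k\phi})$. Second, some bookkeeping: your Ohsawa--Takegoshi inequality should either be stated with the weighted norm $\int_Y|s_{k,j}|^2_{h_0^k}e^{-2k\phi}$ or after normalizing $\sup_Y\phi=0$ so the unweighted norm is controlled, and for a submanifold of arbitrary codimension with a constant independent of $k$ one needs the Manivel--Demailly form of the extension theorem, with the adjunction and normal-bundle twists actually checked rather than ``absorbed into a constant''. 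These last items are repairable details; the missing control of $\Phi_k$ in a neighborhood of $Y$ is a missing idea.
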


\end{rmk}

\subsubsection{Real MA metric}

To complete the circle, we need

\begin{lem}
The limiting convex potentials $\bar{\phi}_{J,0}$ solve the \emph{real MA equation} (\ref{realMACY}) on the interior of $\Delta_J$.
\end{lem}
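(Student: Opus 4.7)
The plan is to derive the equation by taking a distributional limit on both sides of the complex Monge--Amp\`ere equation that $\phi_{CY,J,t}$ already satisfies, and exploit the $T^n$-invariance of the limiting potential to convert the complex MA of the limit into the real MA of $\bar\phi_{J,0}$. The key analytic tool is the continuity of the complex Monge--Amp\`ere operator under uniform convergence of uniformly bounded plurisubharmonic functions (Bedford--Taylor / Xing). By Proposition~\ref{C0convergence} (respectively the analogous conclusion of Strategy~II), after slightly shrinking $U_{J,t}$ we have $\phi_{CY,J,t} \to \bar\phi_{J,0}\circ \text{Log}_{\mathcal X}$ in $C^0$ as $t \to 0$ along the relevant subsequence, with uniform $L^\infty$ bounds; both functions are psh (the limit because $\bar\phi_{J,0}$ is convex), so Bedford--Taylor continuity yields
\[
(dd^c \phi_{CY,J,t})^n \;\longrightarrow\; \bigl(dd^c (\bar\phi_{J,0}\circ \text{Log}_{\mathcal X})\bigr)^n
\]
weakly as measures on a slightly shrunk preimage of $\text{Int}(\Delta_J)$.

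For the left-hand side, the Calabi--Yau equation gives $(dd^c \phi_{CY,J,t})^n = \omega_{CY,t}^n = \frac{(L^n)}{|\log|t||^n}\, d\mu_t$. Multiplying by $|\log|t||^n$ and pushing forward by $\text{Log}_{\mathcal X}$, the volume-form asymptotics of section~\ref{volumeasymptoteessentialskeleton} (combined with the semistable snc hypothesis, which makes the Poincar\'e residue of $\Omega$ constant along the $n$-dimensional strata) show
\[
|\log|t||^n \,\text{Log}_{\mathcal X *}\,(dd^c\phi_{CY,J,t})^n \;\longrightarrow\; (L^n)\,d\mu_0
\]
weakly on $\text{Int}(\Delta_J)$, where $d\mu_0$ is a constant multiple of $dx_1\wedge\cdots\wedge dx_n$.

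For the right-hand side, because $\bar\phi_{J,0}\circ \text{Log}_{\mathcal X}$ depends only on $x_i = \log|z_i|/\log|t|$, a direct computation in the local toric coordinates (as sketched in the heuristic calculation preceding Remark on the hybrid space factoring through $\Delta_{\mathcal X}$ in section~\ref{NAMAmeasure}) gives
\[
|\log|t||^n \,\text{Log}_{\mathcal X *}\,\bigl(dd^c (\bar\phi_{J,0}\circ \text{Log}_{\mathcal X})\bigr)^n \;=\; n!\,\mathrm{MA}_\R(\bar\phi_{J,0}),
\]
uniformly in $t$, interpreted in the Aleksandrov sense for the bounded convex function $\bar\phi_{J,0}$. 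Here $n!$ is the combinatorial factor coming from expanding $\bigl(\sum \bar\phi_{ij} d\log z_i \wedge d\log\bar z_j\bigr)^n$, matching the constant in Proposition~\ref{NAMArealMAcomparisonProp}.

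Equating the two limits yields $n!\,\mathrm{MA}_\R(\bar\phi_{J,0}) = (L^n)\, d\mu_0$ on $\text{Int}(\Delta_J)$, which is precisely the real Monge--Amp\`ere equation (\ref{realMACY}). The main conceptual obstacle is justifying the weak convergence under only $C^0$ (not $C^2$) control on the potentials: this is exactly the content of Bedford--Taylor continuity, and is the reason one cannot bypass pluripotential theory at this step even though the end result looks purely real-analytic. A secondary, bookkeeping-style subtlety is tracking the normalization factors $|\log|t||^n$, $n!$ and the constant in $d\mu_0$ so that they match on the nose; once the volume asymptote of section~\ref{volumeasymptoteessentialskeleton} is invoked, the constants are forced and the identification is automatic.
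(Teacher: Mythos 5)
Your proposal follows essentially the same route as the paper: pass the complex Monge--Amp\`ere equation satisfied by $\phi_{CY,J,t}$ to the $t\to 0$ limit using the weak continuity of the complex MA operator under $C^0$-convergence of uniformly bounded psh potentials, identify the limit of the volume side via the asymptotics of section \ref{volumeasymptoteessentialskeleton}, and convert the limiting $T^n$-invariant complex MA measure into $n!\,\mathrm{MA}_{\R}(\bar{\phi}_{J,0})$ in the Aleksandrov sense. The only point you gloss over --- and which the paper singles out as the genuine extra subtlety, treated in \cite[section 5.1]{LiFermat} --- is that the potentials live on different manifolds $U_{J,t}\subset X_t$ with $t$-dependent charts and logarithm maps, so the standard Bedford--Taylor continuity statement on a fixed domain does not apply verbatim and the usual weak-convergence argument must be modified accordingly.
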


The strategy is to pass the complex MA equation on $U_{J,t}$ to the limit. This is feasible, morally because the complex MA operator is weakly continuous under the $C^0$-convergence of potentials. In our setting, an extra subtlety is that the sequence of potentials are defined on different manifolds, and the modification of the usual arguments are carried out in \cite[section 5.1]{LiFermat}.

\subsubsection{Relation to NA geometry}

The a priori limit strategy does not explicitly appeal to NA geometry. Its principal remaining difficulty is the extension problem. Based on the experience with the Fermat example, we anticipate that extension to toric metrics on ambient toric varieties is a useful technique, and the problem may have a substantially combinatorial aspect. As we emphasized in section \ref{Comparisonproperty}, an explicit class of convex potentials would also be essential for a direct formulation of the real MA equation, which is likely needed for more refined questions such as the affine structure and the singular set of the real MA metric on $Sk(X)$ (\cf the Kontsevich-Soibelman conjecture in section \ref{KontsevichSoibelmanconj}).

In contrast, the NA pluripotential theory is built around the central concept of NA semipositive metrics on $X_K^{an}$, which extend up to $C^0$-small error to K\"ahler potentials on $X_t$ via the Fubini-Study approximation. In that respect, NA pluripotential theory may be viewed as a disguised solution of the extension problem. To make contact with differential geometric applications, however, requires some additional hypothesis such as the NA MA-real MA comparison property. Comparing the difficulties in the two strategies, we speculate that proving the NA MA-real MA comparison property requires a more concrete characterization of NA semipositive metrics, perhaps of explicitly combinatorial nature.

\end{document}